\documentclass[a4paper]{amsart}
\usepackage{verbatim}
\usepackage[dvips]{color}
\usepackage{amssymb}
\usepackage[active]{srcltx}
\usepackage[pdfpagelabels,pdfstartview = FitH,bookmarksopen = true,bookmarksnumbered = true,plainpages = false,hypertexnames = false] {hyperref}

\author[T. Kuusi]{Tuomo Kuusi}
\address{Tuomo Kuusi\\Aalto University
Institute of Mathematics
\\ P.O. Box 111000
FI-00076 Aalto,
Finland}
\email{tuomo.kuusi@tkk.fi}

\author[G. Mingione]{Giuseppe Mingione}
\address{Dipartimento di Matematica e Informatica, Universit\`a di Parma\\
Parco Area delle Scienze 53/a, Campus, 43124 Parma, Italy}
\email{giuseppe.mingione@unipr.it.}

\allowdisplaybreaks

\DeclareMathOperator*{\osc}{osc}

\def\osc{\operatorname{osc}}

\newtheorem{theorem}{Theorem}[section]
\newtheorem{prop}{Proposition}[section]
\newtheorem{lemma}{Lemma}[section]
\newtheorem{cor}{Corollary}[section]
\theoremstyle{definition}
\newtheorem{definition}{Definition}
\newtheorem{remark}{Remark}[section]

\numberwithin{equation}{section}

\newcommand{\rif}[1]{(\ref{#1})}

\def\eqn#1$$#2$${\begin{equation}\label#1#2\end{equation}}
\def\charfn_#1{{\raise1.2pt\hbox{$\chi
_{\kern-1pt\lower3pt\hbox{{$\scriptstyle#1$}}}$}}}

\def\qq1{q_*}
\def\q2{q_{**}}

\def\ep{\varepsilon}
\def\en{\mathbb N}
\def\er{\mathbb R}

\def\loc{\operatorname{loc}}
\def\osc{\operatorname{osc}}

\newdimen\vintbar
\vintbar12pt
\def\vint{-\kern-\vintbar\int}

\def\B{\mathcal B}

\def\0{\boldsymbol 0}

\newcommand{\ratio}{\nu, L}

\newcommand{\divo}{\textnormal{div}}

 \newcommand{\mean}[1]{-\hskip-1.08em\int_{#1}}

\newcommand{\trif}[1] {\textnormal{\rif{#1}}}

%%%%%%%% Bibliography
\newtoks\by
\newtoks\paper
\newtoks\book
\newtoks\jour
\newtoks\yr
\newtoks\pages
\newtoks\vol
\newtoks\publ
\def\et{ \& }
\def\name[#1, #2]{#1 #2}
\def\ota{{\hbox{\bf ???}}}
\def\cLear{\by=\ota\paper=\ota\book=\ota\jour=\ota\yr=\ota
\pages=\ota\vol=\ota\publ=\ota}
\def\endpaper{\the\by, \textit{\the\paper},
{\the\jour} \textbf{\the\vol} (\the\yr), \the\pages.\cLear}
\def\endbook{\the\by, \textit{\the\book},
\the\publ, \the\yr.\cLear}
\def\endpap{\the\by, \textit{\the\paper}, \the\jour.\cLear}
\def\endproc{\the\by, \textit{\the\paper}, \the\book, \the\publ,
\the\yr, \the\pages.\cLear}

\def\mean#1{\mathchoice%
          {\mathop{\kern 0.2em\vrule width 0.6em height 0.69678ex depth -0.58065ex
                  \kern -0.8em \intop}\nolimits_{\kern -0.4em#1}}%
          {\mathop{\kern 0.1em\vrule width 0.5em height 0.69678ex depth -0.60387ex
                  \kern -0.6em \intop}\nolimits_{#1}}%
          {\mathop{\kern 0.1em\vrule width 0.5em height 0.69678ex
              depth -0.60387ex
                  \kern -0.6em \intop}\nolimits_{#1}}%
          {\mathop{\kern 0.1em\vrule width 0.5em height 0.69678ex depth -0.60387ex
                  \kern -0.6em \intop}\nolimits_{#1}}}

\def\vintslides_#1{\mathchoice%
          {\mathop{\kern 0.1em\vrule width 0.5em height 0.697ex depth -0.581ex
                  \kern -0.6em \intop}\nolimits_{\kern -0.4em#1}}%
          {\mathop{\kern 0.1em\vrule width 0.3em height 0.697ex depth -0.604ex
                  \kern -0.4em \intop}\nolimits_{#1}}%
          {\mathop{\kern 0.1em\vrule width 0.3em height 0.697ex depth -0.604ex
                  \kern -0.4em \intop}\nolimits_{#1}}%
          {\mathop{\kern 0.1em\vrule width 0.3em height 0.697ex depth -0.604ex
                  \kern -0.4em \intop}\nolimits_{#1}}}

\newcommand{\aveint}[2]{\mathchoice%
          {\mathop{\kern 0.2em\vrule width 0.6em height 0.69678ex depth -0.58065ex
                  \kern -0.8em \intop}\nolimits_{\kern -0.45em#1}^{#2}}%
          {\mathop{\kern 0.1em\vrule width 0.5em height 0.69678ex depth -0.60387ex
                  \kern -0.6em \intop}\nolimits_{#1}^{#2}}%
          {\mathop{\kern 0.1em\vrule width 0.5em height 0.69678ex depth -0.60387ex
                  \kern -0.6em \intop}\nolimits_{#1}^{#2}}%
          {\mathop{\kern 0.1em\vrule width 0.5em height 0.69678ex depth -0.60387ex
                  \kern -0.6em \intop}\nolimits_{#1}^{#2}}}

\newcommand{\eps}{\varepsilon}

\renewcommand{\osc}{\operatornamewithlimits{osc}}

 \setcounter{tocdepth}{1}

%\date{January 22, 2009}
%%%%%%%%%%%%%%%%%%%%%%%%%%%%%%%%%%%%%%%%%%%%%%%%%%%%%%%%%%%%%%%%%%%%%%%%

%\title[Calder\'on-Zygmund estimates and measure
%data] {The Calder\'on-Zygmund theory for elliptic problems with
%measure data. Part II: $L^1$-data.}

\title[Riesz potentials and nonlinear parabolic equations]{Riesz potentials and \\ nonlinear parabolic equations}
\begin{document}

\maketitle

\centerline{To Neil Trudinger for his 70th birthday}

\begin{abstract} The spatial gradient of solutions to nonlinear degenerate parabolic equations can be pointwise estimated by the caloric Riesz potential of the right hand side datum, exactly as in the case of the heat equation. Heat kernels type estimates persist in the nonlinear case. \end{abstract}

\tableofcontents

\section{Results}
In this paper we are going to consider nonlinear, possibly degenerate parabolic equations whose model is given by
\eqn{modello}
$$
u_t- \divo\, (|Du|^{p-2}Du)=\mu%%\,, \qquad p \geq 2\,, %%%TUOMO : added p \geq 2
$$
where $\mu$ denotes in the most general case a Borel measure with finite total mass. Although the kind of problems considered are nonlinear our goal is to provide a suitable {\em linear potential theory} aimed at describing, in a sharp way, the regularity properties of the gradient $Du$ in terms of those of $\mu$. More precisely, our description shows that sharp gradient pointwise estimates can be given in terms of classical Riesz caloric potentials of the right hand side $\mu$. We will see that, surprisingly enough,
%%%TUOMO : same -> similar %%%ROS rechange here
bounds similar to those that hold for the heat equation
\eqn{heateq}
$$
u_t - \triangle u =\mu
$$
actually hold for solutions to \rif{modello} and, more in general, for solutions to quasilinear equations of the type
\eqn{maineq}
$$
u_t- \divo\, a(x,t,Du)=\mu \qquad \mbox{in} \ \ \Omega_T = \Omega \times (-T,0)\,,
$$
provided suitable, actually optimal, regularity assumptions are made on the partial map $(x,t) \mapsto a(t, x,z)$. Here $\Omega\subset \er^n$ is an open subset, $n \geq 2$ and $T >0$.  Specifically, we shall consider a Carath\'eodory vector field $a \colon \Omega_T \times \er^n \to \er^n$ which is $C^{1}$-regular in the third variable and
satisfying the following parabolicity and continuity conditions: \eqn{asp}
$$
\left\{
    \begin{array}{c}
    |a(x,t,z)|+|\partial_z a(x,t,z)|(|z|^2+s^2)^{1/2} \leq L(|z|^2+s^2)^{(p-1)/2} \\ [5 pt]
    \nu(|z|^2+s^2)^{(p-2)/2}|\xi|^{2} \leq \langle \partial_z a(x,t,z)\xi, \xi
    \rangle \\[5 pt]
    |a(x,t,z)-a(x_0,t,z)|\leq L \omega(|x-x_0|)(|z|^2+s^2)^{(p-1)/2}
    \end{array}
    \right.
$$
whenever $z,\xi \in \er^n$, $x, x_0 \in \Omega$, $t\in (-T,0)$, where $0 < \nu \leq L $ are positive numbers and $p\geq 2$. The symbol $\omega(\cdot)$ denotes a modulus of continuity meaning that $\omega\colon [0,\infty) \to [0,1]$ is a nondecreasing concave function such that $\omega(0)=0$. In the following $s\geq 0$ is a parameter that will be used to distinguish the degenerate case ($s=0$), which covers the model equation in \rif{modello}, from the non-degenerate one ($s>0$); the analysis made in the following will see no difference between these two cases. In the rest of the paper we shall assume that the partial map $$x\to \frac{a(x,t,z)}{(|z|^2+s^2)^{(p-1)/2}}$$  is Dini-continuos in the sense that
\eqn{aspd}
$$
\int_0^1 \omega(\varrho) \, \frac{d \varrho}{\varrho}< \infty\,.
$$
This assumption is optimal for the estimates we are going to derive in the following; %%%TUOMO : ; %%%ROS rechange in the following
see the comments at the beginning of Section \ref{compi}.  
We anyway remark that, everywhere in this paper, we only assume measurability of the partial map $t\to a(x,t,z)$, in other words we assume that time coefficients are merely measurable.
Yet, in this paper we shall always consider the case
$$
p \geq 2
$$
as the case $p < 2$ has already been treated elsewhere \cite{KMpisa} and involves an analysis which is different and somewhat simpler than the one which is necessary here. For more notation we refer the reader to Section 2 and to the rest of this introductory section.
%%%ROS rewording line above
\begin{remark}[On the notion of solution]\label{addiass}
%%%TUOMO : rewording
Throughout the paper, when considering weak solutions to \rif{maineq} and unless otherwise stated, we shall mean energy weak solutions. An energy weak solution $u$ belongs to the parabolic energy space, i.e. 
\eqn{regas}
$$
u \in C^{0}(-T,0;L^{2}(\Omega))\cap L^{p}(-T,0;W^{1,p}(\Omega))\,,
$$
and it is a distributional solution to \rif{maineq} in the sense that
\eqn{weakp}
$$
-\int_{\Omega_T} u \varphi_t \, dx \, dt + \int_{\Omega_T} \langle a(x,t,Du), D\varphi\rangle\, dx\, dt =  \int_{\Omega_T} \varphi\, d \mu
$$
holds whenever $\varphi \in C^{\infty}_{c}(\Omega_T)$. In view of the available approximation theory we shall assume that
$\mu \in L^1$
without loss of generality, while upon letting $\mu\lfloor_{\er^{n+1} \setminus \Omega_T}=0$, we shall finally consider the case
\eqn{addmes}
$$
\mu \in L^1(\er^{n+1})\,.
$$
Assumptions \rif{regas} and \rif{addmes} will be then finally removed in Section \ref{generalmeasure}. There we shall deal with solutions to general measure data problems, where both \rif{regas} and \rif{addmes} are not any longer in force. In other words, we purse the usual path that consists of first deriving a priori estimates for more regular problems and solutions, and then recovering the general case by approximation. Notice that under the assumptions \rif{regas} and \rif{addmes} by standard density arguments the identity in \rif{weakp} remains valid whenever
$
\varphi \in W^{1,p}(\Omega_T) \cap L^\infty(\Omega_T) %%%TUOMO : changed %%%ROS rechanged
$
has a compact support.
\end{remark}

\subsection{Intrinsic and explicit Riesz potential estimates} Very recently, in \cite{KMcras, KMarma, mis3} for the case $p\geq 2$ and in \cite{DM2} for the subquadratic one, it has been shown that, surprisingly enough, the regularity theory of possibly degenerate quasilinear elliptic equations of the type $$-\divo\, a(Du)=\mu$$ completely reduces to that of standard Poisson equation
\eqn{poisson}
$$-\triangle u = \mu$$ up to the $C^{1}$-level, i.e.~up to the gradient continuity. Moreover, in some sense the regularity theory {\em can be actually linearized via Riesz potentials}. In particular, the gradient of solutions can be pointwise bounded via classical Riesz potentials exactly as it happens for solutions to \rif{poisson}, i.e., ~the inequality
\eqn{mainestg}
$$
|Du(x_0)|^{p-1} \leq c{\bf I}_1^{|\mu|}(x_0,r) + c\left(\mean{B(x_0,r)}(|Du|+s)\, dx\right)^{p-1}
$$
holds for a.e. point $x_0$, where
$$
{\bf I}_{1}^{|\mu|}(x_0,r):=\int_0^r \frac{|\mu|(B(x_0,\varrho))}{\varrho^{n-1}}\, \frac{d\varrho}{\varrho}
$$
denotes the standard truncated Riesz potential of $|\mu|$.
Our aim is to build a related theory for general degenerate parabolic problems of the type in \rif{modello} and \rif{maineq}. The main challenge here is to match the anticipated %%%TUOMO : wish -> ant...
a priori Riesz potential estimate with the inhomogeneous nature of equations such as \rif{modello}; it will be indeed part of the work to find {\em the proper formulation, suited to the geometry of the equations considered,} making this possible. We also remark that, even when applied to the stationary case, our results turn out to be more general than those contained in \cite{KMcras, KMarma} since the equations considered here are also allowed to have coefficients. The ultimate outcome is that, once again, the (spatial) gradient regularity theory of solutions to \rif{maineq} can be unified in a natural way with the one of the usual heat equation \rif{heateq}.
The analysis here unavoidably involves the concept of the {\em intrinsic geometry}, introduced and widely employed by DiBenedetto \cite{DBbook, Urbano, Baroni}. According to this principle, the lack of scaling (for $p\not=2$) of equations as
\eqn{heatp}
$$
u_t- \divo\, (|Du|^{p-2}Du)=0
$$ can be locally rebalanced by performing the regularity analysis of the solution on certain special cylinders adapted to the solution itself, indeed called {\em intrinsic parabolic cylinders}. More precisely, instead of using standard parabolic cylinders
\eqn{classicalc}
$$
Q_{r}(x_0,t_0):= B(x_0,r)\times (t_0-r^2, t_0)\,,
$$ one uses {\em cylinders whose time-length is stretched accordingly to the size of the gradient on the cylinder itself}. In other words, one is lead to consider cylinders of the type \eqn{intrinsiccc}
$$
Q_{r}^{\lambda}(x_0,t_0):= B(x_0,r)\times (t_0-\lambda^{2-p}r^2, t_0)\,, \qquad \lambda >0 \,,
$$
on which it simultaneously happens that a condition of the type
\eqn{loclink}
$$
\mean{Q_{r}^{\lambda}(x_0,t_0)} |Du|\, dx \, dt \lesssim \lambda
$$
is satisfied.
The use of the word intrinsic stems from the very basic fact that the parameter $\lambda$ appears on both sides of \rif{loclink}.
Ultimately, this has the effect of rebalancing the local anisotropic character of the equation allowing for proving {\em homogeneous regularity estimates:} in some sense, the equation \rif{heatp} looks like the heat equation when considered on $Q_{r}^{\lambda}(x_0,t_0)$. For instance, when considering standard parabolic cylinders, for solutions to \rif{heatp} it is only possible to prove bounds of the type
\eqn{di1}
$$
\sup_{Q_{r/2}(x_0,t_0)} |Du| \leq c(n,p) \mean{Q_{r}(x_0,t_0)} (|Du|+s+1)^{p-1}\, dx\, dt\,,
$$
whose lack of homogeneity precisely reflects that of the equation. In this sense the previous estimate is natural. When instead considering intrinsic cylinders with \rif{intrinsiccc}-\rif{loclink} being in force, {\em estimates become dimensionally homogeneous}:
\eqn{di2}
$$
c(n,p)\left(\mean{Q_{r}^\lambda(x_0,t_0)} (|Du|+s)^{p-1} \, dx \, dt\right)^{1/(p-1)} \leq \lambda  \Longrightarrow |Du(x_0,t_0)| \leq \lambda\,.
$$
Both \rif{di1} and \rif{di2} are basic results of DiBenedetto \cite{DBbook} while we just remark that intrinsic geometries are nowadays a basic and common tool to treat degenerate parabolic equations \cite{AM, KL1, KL2}.

The previous considerations, together with \rif{di1}-\rif{di2}, are actually the starting point for proving the desired potential estimates. Let us see how. Beside the usual caloric (truncated) Riesz potentials built upon standard parabolic cylinders as in \rif{classicalc}, that is
\eqn{classicalr}
$$
 {\bf I}_{\beta}^\mu(x_0,t_0;r):=
 \int_0^{r} \frac{|\mu|(Q_\varrho(x_0,t_0))}{\varrho^{N-\beta}}\, \frac{d\varrho}{\varrho}\,,\qquad  \qquad 0< \beta \leq N:=n+2\,,
$$
we introduce the {\em intrinsic Riesz potentials} as
\eqn{pari}
$$
 {\bf I}_{\beta, \lambda}^\mu(x_0,t_0;r):=
 \int_0^{r} \frac{|\mu|(Q_\varrho^{\lambda}(x_0,t_0))}{\varrho^{N-\beta}}\, \frac{d\varrho}{\varrho}\,,
$$
where $N$ is called, as usual, the parabolic dimension. Note that in such a way we have $ {\bf I}_{\beta}^\mu(x_0,t_0;r)\equiv  {\bf I}_{\beta,1}^\mu(x_0,t_0;r)$. At this stage the word ``intrinsic" merely refers to the fact that the additional parameter $\lambda$ has been considered in the definition in \rif{pari}, while at the moment no local linkage with solutions of the type in \rif{loclink} has been considered yet. This will come in a few moments: indeed, the right way to give an intrinsic formulation of the linear potential bounds is inspired by \rif{di2} and it is given in the following:
\begin{theorem}[Intrinsic Riesz potential bound]\label{main1} Let $u$ be a solution to \trif{maineq} under the assumptions \trif{asp}-\trif{aspd} and \trif{addmes}. There exist a constant $c > 1$ and a radius $R_0>0$, both depending only on $n,p,\nu,L, \omega(\cdot)$, such that
the following implication holds:
%\begin{eqnarray}
%&&\label{unoimpli}
\eqn{unoimpli}
$$
\begin{array}{c}
\displaystyle
 c{\bf I}_{1,\lambda}^\mu(x_0,t_0;r)+ c\left(\mean{Q_r^{\lambda}(x_0,t_0)} (|Du|+s)^{p-1} \, dx \, dt\right)^{1/(p-1)}   \leq \lambda \\ [10 pt]
 \hspace{7cm} \Longrightarrow |Du(x_0,t_0)| \leq \lambda
\end{array}
$$%\end{eqnarray}
whenever $Q_r^{\lambda}(x_0,t_0) \subset \Omega_T$, $(x_0,t_0)$ is Lebesgue point of $Du$, and $r \leq R_0$. When the vector field $a(\cdot)$ is independent of $x$, no restriction occurs on $r$, i.e., ~$R_0=\infty$.
\end{theorem}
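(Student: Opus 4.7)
The plan is to prove the implication via a DiBenedetto-style intrinsic iteration coupled with comparison to a homogeneous frozen–coefficient problem. Fix the Lebesgue point $(x_0,t_0)$ of $Du$ and, with $\lambda>0$ as in the hypothesis, consider the geometric sequence of nested intrinsic cylinders
\[
Q_i := Q_{r_i}^{\lambda}(x_0,t_0), \qquad r_i := \delta^{i} r,
\]
all sharing the same scaling parameter $\lambda$, where $\delta \in (0,1/4)$ is small and fixed below. Set
\[
E_i^{p-1} := \mean{Q_i}(|Du|+s)^{p-1}\,dx\,dt.
\]
The goal is to prove inductively that $E_i \leq \lambda$ for every $i$; since $(x_0,t_0)$ is a Lebesgue point of $Du$, Lebesgue differentiation then yields $|Du(x_0,t_0)| \leq \lambda$.

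The inductive step combines two classical ingredients. First, on each $Q_i$ I would compare $u$ with the energy solution $v$ of the frozen Cauchy–Dirichlet problem $v_t - \divo a(x_0,t,Dv)=0$ in $Q_i$ with $v=u$ on $\partial_{\mathrm{par}}Q_i$. Energy-type arguments, together with the $x$-Dini assumption \rif{aspd}, yield a comparison estimate morally of the form
\[
\mean{Q_i}|Du-Dv|^{p-1}\,dx\,dt \leq c\,\frac{|\mu|(Q_i)}{r_i^{N-1}\lambda^{(2-p)(p-1)/p}} + c\,\omega(r_i)\,\lambda^{p-1}.
\]
Second, because the inductive hypothesis says $Q_i$ is genuinely intrinsic for $v$, the DiBenedetto gradient regularity recalled after \rif{di2} gives a universal $\alpha>0$ and the decay
\[
\mean{Q_{i+1}}(|Dv|+s)^{p-1}\,dx\,dt \leq c_*\,\delta^{(p-1)\alpha}\, E_i^{p-1}.
\]
Fixing $\delta$ so that $c_*\delta^{(p-1)\alpha}\leq 2^{-(p-1)}$ and combining the two bounds yields the recursion
\[
E_{i+1}^{p-1} \leq \tfrac{1}{2}\, E_i^{p-1} + c\,\frac{|\mu|(Q_i)}{r_i^{N-1}\lambda^{(2-p)(p-1)/p}} + c\,\omega(r_i)\,\lambda^{p-1}.
\]

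Iterating this recursion geometrically, the sum $\sum_i |\mu|(Q_i)/r_i^{N-1}$ is a Riemann sum for the intrinsic Riesz potential $\mathbf{I}_{1,\lambda}^{\mu}(x_0,t_0;r)$, while $\sum_i \omega(r_i)$ is controlled by the Dini integral. Altogether one obtains
\[
E_i^{p-1} \leq c\, E_0^{p-1} + c\,\bigl[\mathbf{I}_{1,\lambda}^{\mu}(x_0,t_0;r)\bigr]^{p-1} + c\,\lambda^{p-1}\int_0^r \omega(\varrho)\,\frac{d\varrho}{\varrho}.
\]
Choosing the constant $c$ in the hypothesis large enough so that the first average on the right-hand side of \rif{unoimpli}, the potential, and the Dini integral each contribute at most $\lambda^{p-1}/6$ forces $E_i \leq \lambda$ for all $i$, whence the theorem. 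The restriction $r \leq R_0$ is introduced precisely so that $\int_0^{R_0}\omega(\varrho)\,d\varrho/\varrho$ is small enough to be absorbed into the iteration; when $a$ is independent of $x$ the $\omega$-term disappears and the argument proceeds unchanged for any $r>0$, yielding $R_0=\infty$.

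The main technical obstacle I anticipate is sustaining the intrinsic geometry across all scales simultaneously. Since $\lambda$ is chosen a priori whereas $E_i$ is only known a posteriori, at each scale one must check that $Q_i$ is still intrinsic for $v$ before the homogeneous decay can be applied; the comparison error must therefore be controlled \emph{uniformly} in $i$, not just at a single step. This is handled by an exit-time alternative: either the intrinsic regime persists along the whole chain (and the scheme above closes), or the excess drops strictly below $\lambda$ at some scale, in which case a parallel non-intrinsic iteration (based on \rif{di1} rather than \rif{di2}) finishes the argument. Weaving the Dini oscillation cleanly through both branches of this alternative, while preserving the quantitative gain from the homogeneous decay, is the delicate heart of the proof.
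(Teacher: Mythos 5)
The broad direction (intrinsic cylinders at scale $\lambda$, comparison maps, iteration with an exit-time caveat, summing the Dini modulus) is correct, but the central recursion you propose does not close. The inequality
\[
\mean{Q_{i+1}}(|Dv|+s)^{p-1}\,dx\,dt \leq c_*\,\delta^{(p-1)\alpha}\, E_i^{p-1}
\]
with a decay factor below one is simply false: if $Dv$ is near-constant of size $\lambda$ --- the intrinsic regime you are in --- both sides are comparable to $\lambda^{p-1}$, and no decay occurs. DiBenedetto's $C^{1,\alpha}$ theory (Theorem \ref{sublime}) gives geometric decay of the \emph{excess} $\mean{Q}|Dv - (Dv)_{Q}|\,dx\,dt$, not of the $L^{p-1}$ average of $|Dv|$, and for $p>2$ there is no reverse-H\"older bridge from the one to the other. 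Your single-level iteration on the $L^{p-1}$ average therefore breaks down.

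The paper's remedy is a genuinely two-level induction. It tracks simultaneously the $L^{p-1}$ average (condition $\mathrm{Ind}_1$, which is only \emph{propagated} across scales, not decayed) and the excess $E_j := \mean{Q_j}|Du - (Du)_{Q_j}|\,dx\,dt$ (condition $\mathrm{Ind}_2$, which admits a summable bound). The mean $a_j := |(Du)_{Q_j}|$ is then controlled by telescoping the excesses, and the $L^{p-1}$ average is bounded by $a_j$ plus excess contributions, which closes the loop. Tracking the excess is also forced by the structure of the comparison estimates: the basic $L^1$ comparison $\mean{Q_j}|Du - Dw_j|\,dx\,dt$ is \emph{sublinear} in $|\mu|(Q_j)/r_j^{N-1}$ (Lemma \ref{lemma:u-w comparison}; exponent $(n+2)/[(p-1)n+p]<1$ for $p>2$), which by itself reproduces only the Wolff potential of \cite{KMW}. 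The linearized comparison $\mean{Q_j}|Du - Dw_j|\,dx\,dt \lesssim |\mu|(Q_{j-1})/r_{j-1}^{N-1}$ --- the step that actually produces the Riesz potential --- holds only under the two-sided gradient bound $\lambda/A \leq |Dw_{j-1}| \leq A\lambda$ (Lemmas \ref{lemma:ce3} and \ref{Du vs Dv(j)}), and establishing this non-degeneracy is exactly the job of the exit-time argument combined with the oscillation control of Theorems \ref{thm:Dw(j) continuous} and \ref{sublimeII}. Your closing paragraph correctly senses that a non-degeneracy issue lurks, but treats it as a device for applying DiBenedetto's decay; in fact it is what makes the comparison estimate linear in the measure, hence what turns a Wolff potential into a Riesz potential.
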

Note that, as in a sense it was a priori required, \rif{unoimpli} allows to recover \rif{di2} when $\mu=0$; this is a first sign of the fact that \rif{unoimpli} is the ``correct intrinsic extension" of \rif{mainestg}. As a matter of fact Theorem \ref{main1} implies a gradient linear potential estimate involving standard Riesz potentials. Surprisingly enough, this is of the same type as the one which holds for the standard heat equation; moreover, when $\mu=0$, this reduces \rif{di1}. We indeed have the following:
\begin{theorem}[Riesz potential bound in classic form]\label{main3} Let $u$ be a solution to \trif{maineq}
in $\Omega_T$ under the assumptions \trif{asp}-\trif{aspd} and \trif{addmes}. There exists a constant $c$, depending only on $n,p,\nu,L,\omega(\cdot)$, such that
$$
|Du(x_0,t_0)|  \leq  c{\bf I}_{1}^{|\mu|}(x_0,t_0;r)+ c\mean{Q_{r}(x_0,t_0)} (|Du|+s+1)^{p-1} \, dx \, dt  % \label{appi}
$$
holds whenever $(x_0,t_0) \in \Omega_T$ is a Lebesgue point of $Du$ and whenever $Q_{r}(x_0,t_0)\subset \Omega_T$ is a standard parabolic cylinder such that $r \leq R_0$; here $R_0$ is the radius introduced in Theorem \ref{main1}.\end{theorem}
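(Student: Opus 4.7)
The natural approach is to derive Theorem \ref{main3} from the intrinsic version Theorem \ref{main1} by picking $\lambda$ equal to a suitable multiple of the quantity conjectured to bound $|Du(x_0,t_0)|$. Specifically, I would set
$$
\lambda := c_1{\bf I}_1^{|\mu|}(x_0,t_0;r) + c_1\mean{Q_r(x_0,t_0)}(|Du|+s+1)^{p-1}\,dx\,dt
$$
with $c_1 = c_1(n,p,\nu,L,\omega(\cdot)) \geq 1$ to be fixed later. The key observation, and the reason the otherwise harmless-looking $+1$ must sit inside the $L^{p-1}$-average, is that this choice automatically forces $\lambda \geq c_1 \geq 1$. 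Since $p\geq 2$, this yields $\lambda^{2-p}\leq 1$ and hence the set inclusion $Q_\varrho^\lambda(x_0,t_0)\subset Q_\varrho(x_0,t_0)$ for every $\varrho\in(0,r]$. In particular $Q_r^\lambda(x_0,t_0)\subset Q_r(x_0,t_0)\subset\Omega_T$, so Theorem \ref{main1} is legitimately applicable on this intrinsic cylinder with our chosen $\lambda$.

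This inclusion is the only nontrivial algebraic ingredient and it immediately produces the two bounds needed to verify the premise of \rif{unoimpli}. By monotonicity of $|\mu|$,
$$
{\bf I}_{1,\lambda}^\mu(x_0,t_0;r)\leq {\bf I}_1^{|\mu|}(x_0,t_0;r)\leq \lambda/c_1,
$$
while the volume ratio $|Q_r(x_0,t_0)|/|Q_r^\lambda(x_0,t_0)|=\lambda^{p-2}$ combined with the same inclusion gives
$$
\mean{Q_r^\lambda(x_0,t_0)}(|Du|+s)^{p-1}\,dx\,dt\leq \lambda^{p-2}\mean{Q_r(x_0,t_0)}(|Du|+s+1)^{p-1}\,dx\,dt\leq \lambda^{p-1}/c_1.
$$
Taking the $(p-1)$-th root in the second bound, the left-hand side of the hypothesis in \rif{unoimpli} is dominated by
$$
c\lambda/c_1+c\lambda/c_1^{1/(p-1)},
$$
where $c$ is the constant furnished by Theorem \ref{main1}.

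Choosing $c_1 = c_1(n,p,\nu,L,\omega(\cdot))$ so large that $c/c_1+c/c_1^{1/(p-1)}\leq 1$ (possible because $c$ is fixed by Theorem \ref{main1}), the premise of \rif{unoimpli} is satisfied; the implication then gives $|Du(x_0,t_0)|\leq \lambda$, which is exactly the estimate claimed (with $c_1$ playing the role of the constant $c$). I do not anticipate any substantive obstacle: once Theorem \ref{main1} is granted, the argument is essentially a rescaling check. The two tasks worth being careful about are the power counting in the volume-ratio comparison (which requires $p\geq 2$ in order to bound $Q_r^\lambda$ by $Q_r$) and the role of the $+1$ shift in the $L^{p-1}$-average, which is precisely what guarantees the threshold $\lambda\geq 1$ and hence the absorption of the intrinsic cylinders into the standard parabolic ones.
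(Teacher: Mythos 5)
Your argument is correct, and it reaches the stated estimate by a more direct route than the paper. The paper also reduces matters to Theorem \ref{main1}, but it selects $\lambda$ implicitly: it sets $h(\lambda):=\lambda-cA(\lambda)$ with $A(\lambda)$ built on the intrinsic cylinder $Q_r^\lambda$ itself, checks $h(1)<0$ (this is where the $+1$ enters there) and $h(\lambda)\to\infty$ using the same inclusion $Q_r^\lambda\subset Q_r$ and volume ratio $\lambda^{p-2}$ that you use, finds a root $\lambda=cA(\lambda)$ by the intermediate value theorem, applies Theorem \ref{main1}, and then must undo the resulting factor $\lambda^{(p-2)/(p-1)}$ by Young's inequality with exponents $\bigl(p-1,(p-1)/(p-2)\bigr)$ and reabsorb a term $\lambda/2$. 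You instead take $\lambda$ explicitly as $c_1$ times the final right-hand side, with the $L^{p-1}$-average left un-rooted; since that quantity forces $\lambda\geq 1$, the inclusion $Q_\varrho^\lambda\subset Q_\varrho$ holds, and the homogeneity works out exactly: the volume-ratio factor $\lambda^{p-2}$ is absorbed by the $(p-1)$-th root, so the premise of \rif{unoimpli} is verified by a one-line computation and no fixed-point argument or Young/reabsorption step is needed. Both proofs rest on the same two ingredients (the cylinder inclusion for $\lambda\geq1$, $p\geq2$, and the monotonicity of the measure), so the dependence of the constant is identical; your version is shorter, while the paper's scheme is the standard template for converting intrinsic into classical bounds and would also produce the sharper form with the average raised to the power $1/(p-1)$ up to an additive $\lambda$-term before Young's inequality. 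The only cosmetic point worth adding is the trivial remark that one may assume ${\bf I}_1^{|\mu|}(x_0,t_0;r)<\infty$, since otherwise there is nothing to prove.
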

%\begin{remark}\label{nox} Both in Theorem \ref{main1} and \ref{main3} we can drop the restriction $r \leq R_0$ when the vector field $a(\cdot)$ is independent of $x$. This follows from a careful reading of the proofs. Measurable dependence on $t$ is still allowed without requiring $r \leq R_0$.
%\end{remark}
For the case $p=2$ the previous estimate recovers the main parabolic result in \cite{DM}. An immediate consequence of Theorem \ref{main3} is the following global bound via classical, non-truncated caloric Riesz potentials:
\begin{cor}\label{globalb} Let $u$ be a weak solution to the equation
\eqn{senzat}
$$
u_t - \divo \,a(t,Du)=\mu \qquad \mbox{in}\ \er^{n+1}
$$
under the assumptions \trif{asp}; moreover, assume that the global integrability $u \in L^{p-1}(-\infty,t_0;W^{1,p-1}(\er^n))$ holds for $t_0 \in \er$. There exists a constant $c$, depending only on $n,p,\ratio$, such that the upper bound
$$
|Du(x_0,t_0)| \leq c\int_{\{ t< t_0\}}\frac{d|\mu|(x,t)}{d_{\rm par}((x,t),( x_0, t_0 ))^{N-1}} %\qquad S_{t_0}:=\{(x,t)\, : \, t< t_0\}
$$
holds whenever $(x_0,t_0)$ is a Lebesgue point of $Du$.
\end{cor}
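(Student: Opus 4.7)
The strategy is to apply Theorem \ref{main3} on parabolic cylinders of arbitrarily large radius, pass to the limit $r \to \infty$, and then rewrite the resulting untruncated caloric Riesz potential in the claimed explicit form by Fubini's theorem.

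Since the vector field in \rif{senzat} does not depend on $x$, Theorem \ref{main3} applies with $R_0 = \infty$, so that for every $r > 0$ and every Lebesgue point $(x_0,t_0)$ of $Du$ one has
\begin{equation*}
|Du(x_0,t_0)| \leq c\,\mathbf{I}_{1}^{|\mu|}(x_0,t_0;r) + c \mean{Q_{r}(x_0,t_0)}(|Du|+s+1)^{p-1}\, dx\, dt,
\end{equation*}
with $c=c(n,p,\nu,L)$ (the modulus $\omega(\cdot)$ drops out of the dependencies because there is no spatial dependence). Letting $r \to \infty$, monotone convergence gives
$$
\mathbf{I}_{1}^{|\mu|}(x_0,t_0;r) \longrightarrow \int_0^\infty \frac{|\mu|(Q_\varrho(x_0,t_0))}{\varrho^{N-1}}\, \frac{d\varrho}{\varrho},
$$
while the averaged term should tend to zero thanks to the global integrability $Du \in L^{p-1}(\er^n\times(-\infty,t_0))$ inherited from the hypothesis $u \in L^{p-1}(-\infty,t_0;W^{1,p-1}(\er^n))$, the volume factor $|Q_r|$ blowing up.

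To rewrite the untruncated caloric potential in explicit distance form, I would invoke Fubini's theorem:
$$
\int_0^\infty \frac{|\mu|(Q_\varrho(x_0,t_0))}{\varrho^{N}}\, d\varrho = \int_{\er^{n+1}}\left(\int_0^\infty \chi_{Q_\varrho(x_0,t_0)}(x,t)\,\frac{d\varrho}{\varrho^N}\right)d|\mu|(x,t).
$$
Since $(x,t) \in Q_\varrho(x_0,t_0)$ is equivalent to $t < t_0$ together with $\varrho > \max\{|x-x_0|,\sqrt{t_0-t}\} = d_{\rm par}((x,t),(x_0,t_0))$, the inner integral equals $\bigl[(N-1)\,d_{\rm par}((x,t),(x_0,t_0))^{N-1}\bigr]^{-1}$ on $\{t<t_0\}$ and zero otherwise, which produces
$$
\int_0^\infty \frac{|\mu|(Q_\varrho(x_0,t_0))}{\varrho^{N-1}}\,\frac{d\varrho}{\varrho} = \frac{1}{N-1}\int_{\{t<t_0\}}\frac{d|\mu|(x,t)}{d_{\rm par}((x,t),(x_0,t_0))^{N-1}}.
$$
The asserted inequality then follows after absorbing the factor $1/(N-1)$ into $c$.

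The delicate point in this plan is to verify that the full averaged integral, and not merely its $|Du|^{p-1}$-part, actually vanishes as $r\to\infty$. The $|Du|^{p-1}$-contribution is controlled directly by the global $L^{p-1}$-bound together with $|Q_r|\to\infty$. The additive constant contribution coming from $(s+1)^{p-1}$ should, in my expectation, be removable here, being an artefact of the Dini-term bookkeeping in Theorem \ref{main3} that is not actually needed when $a(\cdot)$ is $x$-independent; if one prefers a more transparent reduction, one may first rescale $u \mapsto \lambda^{-1}u$, which transforms $s \to \lambda^{-1}s$ while preserving the structural assumptions \rif{asp} with the same constants, apply the previous step to the rescaled equation, and finally send $\lambda\to\infty$ before undoing the rescaling.
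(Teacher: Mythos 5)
Your skeleton is exactly the route the paper has in mind (it offers no separate proof, calling the corollary an immediate consequence of Theorem \ref{main3}): apply the pointwise bound with $R_0=\infty$, let $r\to\infty$, and rewrite the untruncated potential by Fubini, $\int_0^\infty |\mu|(Q_\varrho)\varrho^{-N}\,d\varrho=(N-1)^{-1}\int_{\{t<t_0\}}d_{\rm par}^{-(N-1)}\,d|\mu|$; that identity and the vanishing of the $|Du|^{p-1}$-average are fine. The genuine gap is the one you flag yourself: the average in Theorem \ref{main3} is of $(|Du|+s+1)^{p-1}$, so as $r\to\infty$ it tends to $(s+1)^{p-1}$, not to $0$, and your proposed repair does not work as written. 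A pure multiplication $u\mapsto\lambda^{-1}u$ does \emph{not} preserve the structure \rif{asp} for $p\neq 2$: the time derivative scales like $\lambda^{-1}$ while the divergence term scales like $\lambda^{1-p}$, so one must use the intrinsic scaling $a\mapsto \lambda^{1-p}a(\cdot,\lambda\,\cdot)$, $t\mapsto \lambda^{2-p}t$ (which sends $s\mapsto s/\lambda$ and $\mu$ to a measure whose standard potential is $\lambda^{-1}{\bf I}^\mu_{1,\lambda}$). Doing this correctly and scaling the estimate back, the ``$+1$'' contributes $c\,\lambda^{2-p}(s+\lambda)^{p-1}\geq c\lambda$, which \emph{blows up} in your proposed limit $\lambda\to\infty$; the useful limit is $\lambda\to 0$, and it removes the spurious constant only when $s=0$, otherwise optimization in $\lambda$ leaves an additive term of order $s$.

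The clean repair is to bypass Theorem \ref{main3} and argue directly from Theorem \ref{main1}, where the average carries $(|Du|+s)^{p-1}$ and $R_0=\infty$: since $p\geq 2$ one checks ${\bf I}^\mu_{1,\lambda}(x_0,t_0;\infty)\leq {\bf I}^{|\mu|}_{1}(x_0,t_0;\infty)$ for every $\lambda>0$ (for $\lambda\geq1$ by inclusion of cylinders, for $\lambda\leq1$ by a change of variables producing the factor $\lambda^{(p-2)(N-1)/2}\leq1$), while for fixed $\lambda$ the global $L^{p-1}$-bound gives $\bigl(\mean{Q^\lambda_r}(|Du|+s)^{p-1}\,dx\,dt\bigr)^{1/(p-1)}\to s$ as $r\to\infty$. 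Hence $|Du(x_0,t_0)|\leq\lambda$ for every $\lambda> c\,{\bf I}^{|\mu|}_{1}(x_0,t_0;\infty)+c\,s$, which is the asserted inequality up to an additive $c\,s$. That extra $s$-term cannot be removed from the paper's estimates either (for $\mu=0$, $s>0$ they only yield $|Du(x_0,t_0)|\lesssim s$, not $0$), so the corollary as printed is really the statement for the degenerate case $s=0$; your write-up should either assume $s=0$ or carry the $c\,s$ explicitly rather than appeal to the rescaling trick.
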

\begin{remark} In Theorem \ref{globalb} $d_{\rm par}(\cdot)$ denotes the standard parabolic distance in $\er^{n+1}$, which is defined by
%\eqn{pd}
$$
d_{\rm par}((x,t),(x_0,t_0)):=\max\left\{|x-x_0|, \sqrt{|t-t_0|}\right\}\,.
$$
The previous result shows that Theorems \ref{main1} and \ref{main3} play the role of the usual representation formulae via heat kernels for solutions to the heat equation. In recent years there has been a large activity devoted to the understanding the extent to which heat kernel estimates are still valid when passing to more general settings, as for instance Lie groups and manifolds \cite{DGS, SC}. In this paper we are interested, in a dual but yet related way, to see the extent to which estimates as those implied by well-behaving heat kernels can be recovered in the nonlinear degenerate setting. Our results also connects to a recent fact observed in \cite{LM}, and concerning the $p$-superharmonicity of linear Riesz potentials.
\end{remark}
The proof of Theorem \ref{main1} opens the way to an optimal continuity criterion for the gradient still involving only classical Riesz potentials and that, as such, is again independent of $p$.
\begin{theorem}[Gradient continuity via linear potentials]\label{mainc1} Let $u$ be a solution to \trif{maineq} % such that $Du$ is continuous
in $\Omega_T$ under the assumptions \trif{asp}-\trif{aspd} and \trif{addmes}. If
$$
\lim_{r \to 0}{\bf I}_{1}^\mu(x,t;r)  = 0$$ locally uniformly w.r.t. $(x,t)$, then $Du$ is continuous in $\Omega_T$.
\end{theorem}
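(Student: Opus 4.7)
The plan is to deduce continuity of $Du$ from an oscillation-decay mechanism on \emph{intrinsic} cylinders, built on top of the pointwise bound already provided by Theorem \ref{main1}. First, Theorem \ref{main3} guarantees that $Du$ is locally bounded in $\Omega_T$, so on any subcylinder $\Omega_T' \CC \Omega_T$ we may fix a constant $M \geq 1$ with $|Du|+s+1 \leq M$ almost everywhere in $\Omega_T'$; this fixes, once and for all, a natural scale for the intrinsic parameter $\lambda$ (we only need $\lambda \in (0, cM]$).

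Next, the core analytic step is to establish an intrinsic oscillation decay. Fix $(x_0,t_0) \in \Omega_T'$ and, given $r > 0$ small, choose $\lambda$ coupled to the mean of $(|Du|+s)^{p-1}$ on $Q_r^{\lambda}(x_0,t_0)$ in the spirit of the intrinsic geometry underlying Theorem \ref{main1}. On $Q_r^\lambda(x_0,t_0)$, compare $u$ with the solution $v$ of the homogeneous, frozen-coefficient Cauchy--Dirichlet problem
\[
v_t - \divo\, a(x_0, t, Dv) = 0, \qquad v = u \text{ on } \partial_{\mathrm{par}} Q_r^\lambda(x_0,t_0).
\]
On the intrinsic cylinder, $v$ satisfies the Di\-Benedetto--Friedman type $C^{0,\alpha}$-gradient estimate, so for every $\sigma \in (0,1)$ one has $\osc_{Q_{\sigma r}^{\lambda}}\, Dv \leq c\sigma^\alpha \lambda$. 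The comparison energy $|Du - Dv|$ is then controlled, after a standard duality/truncation argument in the weak formulation \rif{weakp}, by a term of the form $\lambda \cdot \eta(r)$, where $\eta(r)$ is shaped out of $|\mu|(Q_r^\lambda)/r^{N-1}$ and of the Dini modulus $\omega(r)$ entering via \rif{aspd}. Combining these two facts yields the decay
\[
\mean{Q_{\sigma r}^\lambda} |Du - (Du)_{Q_{\sigma r}^\lambda}|\, dx\, dt \leq c\sigma^\alpha \lambda + c\, {\bf I}_{1,\lambda}^\mu(x_0,t_0;r) + c\,\omega_{\mathrm{Dini}}(r)\,\lambda.
\]

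The iteration is then performed at a geometrically decreasing sequence of radii $r_k = \sigma^k r_0$, with $\sigma$ chosen small enough to absorb the $c\sigma^\alpha$ factor. Two regimes must be distinguished at each scale: a \emph{non-degenerate} regime, where the mean of $|Du|$ on the cylinder is comparable to the current $\lambda$, so the intrinsic coupling is preserved and oscillation contracts; and a \emph{degenerate} regime, where this mean is much smaller than $\lambda$, in which case $|Du|$ is already small and the estimate of Theorem \ref{main1} with an updated $\lambda$ closes the loop. Since $\omega$ is Dini, $\sum_k \omega(r_k) < \infty$, and by the hypothesis ${\bf I}_1^\mu(x,t;r) \to 0$ locally uniformly, the same holds for the intrinsic potentials ${\bf I}_{1,\lambda}^\mu$ (with $\lambda \leq cM$ fixed), so the tail of the iterated sum goes to zero uniformly in $(x_0,t_0)\in\Omega_T'$. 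This gives $\mean{Q_{r_k}^\lambda}|Du - (Du)_{Q_{r_k}^\lambda}| \to 0$ uniformly, hence $Du$ has a continuous representative on $\Omega_T'$, and by arbitrariness of $\Omega_T'$ on the whole $\Omega_T$.

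The main obstacle is the bookkeeping of the intrinsic parameter $\lambda$ across scales: the cylinders $Q_{r_k}^{\lambda_k}$ used to measure oscillation must be nested and their $\lambda_k$ must remain bounded and compatible with the intrinsic coupling $\rif{loclink}$ at every step, otherwise the two regimes cannot be combined into a single monotone decay of $\osc\, Du$. This is precisely where the implication in Theorem \ref{main1} is invoked as a black box: it provides, at each scale, the a priori bound on $|Du(x_0,t_0)|$ in terms of $\lambda_k$, allowing us to update $\lambda_{k+1}$ consistently and to conclude that both $\lambda_k$ and $\osc_{Q_{r_k}^{\lambda_k}}\, Du$ tend to zero as $k \to \infty$ whenever ${\bf I}_1^\mu$ does so uniformly.
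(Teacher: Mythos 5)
Your proposal captures the right high-level strategy --- compare $u$ with solutions of homogeneous frozen-coefficient problems on intrinsic cylinders, obtain excess decay via DiBenedetto--Friedman estimates, iterate dyadically, and let the Dini and potential sums control the error --- but there are three genuine gaps relative to what is actually needed, and the main one you yourself flag but do not resolve.

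\textbf{The $\lambda$-bookkeeping is the crux, and your proposal does not close it.} You propose to ``choose $\lambda$ coupled to the mean of $(|Du|+s)^{p-1}$'' at each scale and then ``update $\lambda_{k+1}$ consistently'' using Theorem~\ref{main1} as a black box. This is precisely the obstacle the paper is organized to avoid: if $\lambda_k$ varies with $k$, the cylinders $Q_{r_k}^{\lambda_k}$ are not nested, the averages $(Du)_{Q_{r_k}^{\lambda_k}}$ no longer form a telescoping sequence, and Theorem~\ref{main1} by itself gives no oscillation information whatsoever (it controls $|Du(x_0,t_0)|$, not $\osc Du$). The paper's decisive observation is that, since ${\bf I}_1^\mu$ locally bounded already forces $Du\in L^\infty_{\loc}$ by Theorem~\ref{main3}, one may fix a \emph{single} $\lambda := \|Du\|_{L^\infty}+s+1 \geq 1$ for the entire argument (see \rif{il_lambda}). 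Then all cylinders $Q_\varrho^\lambda$ are genuine dilates of each other, nesting is automatic, $Q_\varrho^\lambda\subset Q_\varrho$ (since $\lambda\geq 1$ and $p\geq 2$) so ${\bf I}_{1,\lambda}^\mu \leq {\bf I}_1^\mu$, and no updating is needed. Your remark that ``$\lambda\in(0,cM]$'' misses that if $\lambda<1$ the inclusion $Q_\varrho^\lambda\subset Q_\varrho$ fails, so the reduction from intrinsic to standard potentials would break.

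\textbf{The degenerate regime cannot be closed the way you describe, and the paper needs a separate VMO step.} With $\lambda$ fixed, the paper handles degeneracy by partitioning the scale indices into the set $\mathcal{L}$ where $\bigl(\mean{Q_j}|Du|^{p-1}\bigr)^{1/(p-1)} < \lambda\ep/50$ and its complement, and organizes the latter into maximal chains $\mathcal{C}_i^m$ starting at an index $i\in\mathcal{L}$; the key point is that there may be \emph{infinitely many} ``exit times.'' On $\mathcal{L}$ the averages themselves are small (so Cauchy-ness of $(Du)_{Q_j}$ is trivial at those indices), while along each chain a pointwise lower bound on $|Dw_{j-1}|$ and $|Dv_j|$ licenses the excess-decay inequality \rif{decaydecay} and summability against the Dini and potential tails. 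Your ``two regimes'' is a rough sketch of this, but without the combinatorial structure the iteration does not give a uniform Cauchy condition on the averages. Moreover the paper's proof of continuity explicitly uses the already-established VMO result (Theorem~\ref{mainv1}) in the ``interpolation of radii'' step --- the bound \rif{duep} is needed to pass from the dyadic radii $r_j=\sigma^j\tilde R_0$ to arbitrary $\varrho$ --- and your proposal has no substitute for this. Finally, for equations with $x$-dependent coefficients the comparison is actually a two-step one ($u\to w_j$ solving the full equation, then $w_j\to v_j$ solving the frozen one), and the lower bound on $|Dw_{j-1}|$ plus Theorem~\ref{thm:Dw(j) continuous} is what makes the frozen comparison usable; a one-step comparison with the frozen $v$ alone does not control the $\omega(\cdot)$-contribution in the right way.
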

An important corollary involves Lorentz spaces:
\begin{cor}[Lorentz spaces criterion]\label{mainl} Let $u$ be a solution to \trif{maineq} % such that $Du$ is continuous
in $\Omega_T$ under the assumptions \trif{asp}-\trif{aspd}. If $\mu \in L(N,1)$, that is if
$$
\int_0^{\infty} |\{(x,t) \in \Omega_T \, : \, |\mu(x,t)| > \lambda \}|^{1/N} d \lambda < \infty\,,
$$
then $Du$ is continuous in $\Omega_T$.
\end{cor}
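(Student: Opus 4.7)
I would combine Theorem \ref{mainc1} with a direct rearrangement estimate of the truncated caloric Riesz potential ${\bf I}_1^\mu$. Indeed, by Theorem \ref{mainc1} the continuity of $Du$ in $\Omega_T$ follows once it is shown that, under the hypothesis $\mu \in L(N,1)$,
\[
\lim_{r \to 0} {\bf I}_1^\mu(x,t;r) = 0 \qquad \mbox{locally uniformly for } (x,t) \in \Omega_T.
\]

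I fix a compact set $K \CC \Omega_T$ and a slightly larger compact set $K' \CC \Omega_T$ such that $Q_{r_0}(x,t) \subset K'$ for every $(x,t) \in K$ and some fixed $r_0 > 0$. I then set $\tilde{\mu} := |\mu|\chi_{K'}$ and denote its decreasing rearrangement by $\tilde{\mu}^*$; since truncation does not increase the rearrangement, $\tilde\mu \in L(N,1)$. The parabolic volume being $|Q_\varrho| = c_n \varrho^N$, the Hardy--Littlewood rearrangement inequality yields, for every $(x,t) \in K$ and $\varrho \leq r_0$,
\[
|\mu|(Q_\varrho(x,t)) = \tilde{\mu}(Q_\varrho(x,t)) \leq \int_0^{c_n \varrho^N} \tilde{\mu}^*(\sigma)\, d\sigma.
\]
Inserting this into the definition of ${\bf I}_1^\mu$ and invoking Fubini leads to the key bound
\[
{\bf I}_1^\mu(x,t;r) \leq \int_0^r \frac{1}{\varrho^N} \int_0^{c_n \varrho^N} \tilde{\mu}^*(\sigma)\, d\sigma \, d\varrho \leq c\int_0^{c_n r^N} \tilde{\mu}^*(\sigma)\, \sigma^{1/N} \, \frac{d\sigma}{\sigma},
\]
valid for all $r \leq r_0$, with a right-hand side that is independent of $(x,t) \in K$.

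A second, elementary Fubini/Cavalieri computation gives
\[
\int_0^\infty \tilde{\mu}^*(\sigma)\, \sigma^{1/N}\, \frac{d\sigma}{\sigma} = N \int_0^\infty |\{|\tilde{\mu}| > \lambda\}|^{1/N}\, d\lambda,
\]
so the hypothesis $\mu \in L(N,1)$ forces the full integral to be finite. By absolute continuity of the Lebesgue integral, the truncation at level $c_n r^N$ of the previous expression vanishes as $r \to 0$, which delivers the desired locally uniform convergence of ${\bf I}_1^\mu$; Theorem \ref{mainc1} then concludes the proof. There is no substantive obstacle here: the core step is merely a parabolic transposition---with $N = n+2$ and cylinders $Q_\varrho$ in place of Euclidean balls---of the classical Lorentz embedding for Riesz potentials, and no new ingredient beyond Theorem \ref{mainc1} is required.
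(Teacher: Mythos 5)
Your proposal is correct and is essentially the argument the paper intends: the corollary is meant to follow directly from Theorem \ref{mainc1} once one checks, via the Hardy--Littlewood rearrangement inequality and the Fubini computation you carry out, that $\mu \in L(N,1)$ forces ${\bf I}_{1}^{\mu}(x,t;r)\to 0$ locally uniformly as $r\to 0$ (the paper itself gives no separate proof, treating this Riesz-potential/Lorentz estimate as classical). The only detail worth adding is a one-line remark that $L(N,1)\subset L^{1}_{\loc}$, so that the solution falls within the framework of Theorem \ref{mainc1} (or, for general measures/SOLA, of Theorem \ref{solath}).
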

Corollary \ref{mainl} substantially improves the ones in \cite{KMr} claiming only the boundedness of the gradient under the assumption $\mu \in L(N,1)$; see also \cite{CM} for a related global boundedness result in the elliptic case. It might be interesting to note how the above result naturally extends to the parabolic case the classical gradient continuity results valid in the elliptic case, starting from those available for the Poisson equation $-\triangle u=\mu$ in domain of $\er^n$. For this it is known that the condition $\mu \in L(n,1)$ is a sufficient one for the gradient continuity. This is in turn related to, and indeed implied by, a classical result of Stein \cite{Steinc} that claims the continuity of a function $f$ whenever its distributional derivatives belong to $L(n,1)$. Corollary \ref{mainl} gives the precise nonlinear parabolic analog of such classical facts. As expected, the space dimension $n$ is replaced by the parabolic one $N=n+2$, which is naturally associated to the standard parabolic metric.

Preliminary to the proof of the continuity criterion, there is another result which claims the VMO gradient regularity under weaker assumptions on the measure $\mu$.
\begin{theorem}[VMO gradient regularity]\label{mainv1} Let $u$ be a solution to \trif{maineq} under the assumptions \trif{asp}-\trif{aspd} and \trif{addmes}. If
$
{\bf I}_{1}^\mu(x,t;r)$ is locally bounded in $\Omega_T$ and if
\eqn{assvmo}
$$
\lim_{r \to 0}\frac{|\mu|(Q_r(x,t))}{r^{N-1}}=0
$$
locally uniformly in $\Omega_T$ w.r.t. $(x,t)$, then $Du$ is locally $VMO$ in $\Omega_T$, that is
$$
\lim_{R \to 0} \, \sup_{r \leq R, Q_{r} \subset Q'}  \, \mean{Q_r} |Du - (Du)_{Q_r}|\, dx\, dt =0
$$
for every open subset $Q' \Subset \Omega_T$.
\end{theorem}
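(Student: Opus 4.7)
\medskip
\noindent\textbf{Proof plan for Theorem \ref{mainv1}.} The plan is to bootstrap from the potential bound in classical form (Theorem \ref{main3}) to an excess decay which, under the assumption \rif{assvmo}, becomes infinitesimal along any shrinking sequence of cylinders. First I would fix $Q' \Subset Q'' \Subset \Omega_T$ and observe that the local boundedness of ${\bf I}_1^{|\mu|}$ combined with Theorem \ref{main3} yields a uniform bound
$$
\|Du\|_{L^{\infty}(Q')}\leq M
$$
for some $M=M(Q',Q'')\geq s+1$. In particular the parameter $\lambda := c_*(M+s+1)$, with $c_*$ large enough, dominates the local gradient on $Q'$, so that intrinsic cylinders $Q_{r}^{\lambda}(x_0,t_0)$ and standard parabolic cylinders $Q_r(x_0,t_0)$ are comparable up to fixed constants (a dilation in the time variable by a factor depending only on $M,p$). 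This reduces the task to proving that the excess functional
$$
\mathcal{E}(Q_r^{\lambda}(x_0,t_0)):=\mean{Q_r^{\lambda}(x_0,t_0)} |Du-(Du)_{Q_r^\lambda(x_0,t_0)}|\,dx\,dt
$$
tends to zero as $r\to 0$, locally uniformly in $(x_0,t_0)\in Q'$.

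\medskip
Next I would establish the fundamental oscillation/excess decay inequality that is the core of the theory underlying Theorem \ref{main1}. Namely, fixing $\sigma \in (0,1/4)$ to be chosen small, I would compare $u$ on $Q_r^{\lambda}(x_0,t_0)$ with the solution $v$ to the homogeneous Cauchy--Dirichlet problem
$$
\begin{cases}
v_t-\divo\, a(x_0,t,Dv)=0 &\text{in } Q_r^{\lambda}(x_0,t_0),\\
v=u &\text{on } \partial_{\mathrm{par}} Q_r^{\lambda}(x_0,t_0),
\end{cases}
$$
that is, the frozen-coefficient version. Energy and duality arguments (in the spirit of the parabolic comparison estimates used for Theorem \ref{main1}) furnish
$$
\mean{Q_r^{\lambda}(x_0,t_0)} |Du-Dv|\,dx\,dt \leq c\,\lambda^{2-p} \,\frac{|\mu|(Q_r^{\lambda}(x_0,t_0))}{r^{N-1}}
+ c\,\omega(r)\,\lambda,
$$
modulo standard manipulations, while DiBenedetto's $C^{1,\alpha}$ regularity for $v$ yields
$$
\mean{Q_{\sigma r}^{\lambda}(x_0,t_0)} |Dv-(Dv)_{Q_{\sigma r}^\lambda}|\,dx\,dt
\leq c\,\sigma^\alpha \mean{Q_{r}^{\lambda}(x_0,t_0)} |Dv-(Dv)_{Q_{r}^\lambda}|\,dx\,dt\,.
$$
Combining the two ingredients through the triangle inequality produces the decay
$$
\mathcal{E}(Q_{\sigma r}^{\lambda}(x_0,t_0)) \leq c\,\sigma^\alpha \mathcal{E}(Q_{r}^{\lambda}(x_0,t_0))
+ \frac{c}{\sigma^{N}}\Big(\lambda^{2-p}\,\frac{|\mu|(Q_r^{\lambda}(x_0,t_0))}{r^{N-1}}+\omega(r)\lambda\Big).
$$

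\medskip
With this decay inequality in hand, I would iterate over the geometric scale $r_j = \sigma^j r$. Choosing $\sigma$ so that $c\sigma^\alpha \leq 1/2$, one obtains the standard alternative summation
$$
\mathcal{E}(Q_{r_j}^{\lambda}) \leq 2^{-j}\mathcal{E}(Q_{r}^{\lambda})+
c\sum_{k=0}^{j-1} 2^{-(j-1-k)}\!\left(\lambda^{2-p}\frac{|\mu|(Q_{r_k}^{\lambda})}{r_k^{N-1}}+\omega(r_k)\lambda\right)\!.
$$
Now the first summand decays geometrically, the Dini property \rif{aspd} forces $\sum \omega(r_k)<\infty$ together with $\omega(r_k)\to 0$, and the hypothesis \rif{assvmo} gives $|\mu|(Q_{r_k}^\lambda)/r_k^{N-1}\to 0$ locally uniformly in $(x_0,t_0)$. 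A standard Cesaro-type lemma then forces the right-hand side to tend to zero as $j\to\infty$, uniformly in $(x_0,t_0)\in Q'$, and translating back to standard parabolic cylinders through the equivalence established in the first step one obtains exactly the local VMO condition.

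\medskip
\noindent\emph{Main obstacle.} The technical heart of the argument is the decay inequality for $\mathcal{E}$: one must align the $C^{1,\alpha}$ regularity of the homogeneous, $x$-independent reference problem (available only in the right intrinsic geometry) with a comparison estimate sufficiently strong to isolate the density $|\mu|(Q_r^\lambda)/r^{N-1}$ linearly in front of the measure term. This is delicate because the scaling of the $p$-parabolic comparison produces the factor $\lambda^{2-p}$, and one must make sure that this factor is harmless once the intrinsic parameter $\lambda$ is frozen by means of the a priori bound on $\|Du\|_{L^\infty(Q')}$. All the other steps are iterative and follow standard patterns once this decay inequality is in place.
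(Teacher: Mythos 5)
The proposal gets the scaffolding right (fix $\lambda$ via the $L^\infty$ bound from Theorem~\ref{main3}, compare with a frozen-coefficient homogeneous solution, iterate an excess decay), but it assumes two key inequalities that do \emph{not} hold without a nondegeneracy lower bound on the gradient, and it is exactly the restoration of these inequalities via an exit-time alternative that constitutes the real work in the paper's proof.

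First, the claimed linear comparison estimate
$\mean{Q_r^{\lambda}}|Du-Dv|\lesssim\lambda^{2-p}|\mu|(Q_r^\lambda)/r^{N-1}+\omega(r)\lambda$
is false as stated for $p>2$. The basic comparison (Lemma~\ref{lemma:u-w comparison}, estimate \rif{ce}) gives the density of $\mu$ to the power $(n+2)/[(p-1)n+p]<1$, and the promotion to a \emph{linear} dependence is precisely the content of Lemma~\ref{lemma:ce3}, whose hypothesis \rif{eq:w(j) 1} is a two-sided bound $\lambda/A\leq|Dw_{j-1}|\leq A\lambda$ on $Q_j$. Second, and more importantly, the excess decay
$E(Dv,\sigma Q_r^\lambda)\leq c\,\sigma^\alpha\,E(Dv,Q_r^\lambda)$
is not a consequence of DiBenedetto's $C^{1,\alpha}$ theory for a \emph{fixed} $\lambda$. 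Since $\lambda$ is taken of order $\|Du\|_{L^\infty}$, on cylinders where $|Dv|$ happens to be much smaller than $\lambda$ the intrinsic geometry $Q_r^\lambda$ is mismatched and no such decay holds; Theorem~\ref{sublime} gives the excess decay only under the hypothesis \rif{maina}, which again includes the lower bound $\lambda/B\leq\sup|Dv_j|$. Because of these two missing lower bounds, your iteration would break on exactly the cylinders where the equation degenerates.

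The paper resolves this by an alternative (Step~1, around \rif{primo}--\rif{decaydecay}): either $\bigl(\mean{Q_{j+1}}|Du|^{p-1}\bigr)^{1/(p-1)}<\lambda\ep/50$, in which case $E_{j+1}<\lambda\ep$ is immediate and nothing more is needed, or the reverse inequality holds, which is precisely the nondegeneracy input. In the second case a careful chain (\rif{ww2}--\rif{ww3}) — using the intermediate comparison map $w_{j-1}$ solving the \emph{non-frozen} equation, DiBenedetto-type sup bounds (Corollary~\ref{cor:Dw(j) bounded 2}, Theorem~\ref{thm:Dv(j) bounded}) and, crucially, the quantitative continuity estimate of Theorem~\ref{thm:Dw(j) continuous} to turn a one-point lower bound into a uniform lower bound over $Q_j$ — verifies the hypotheses of Lemma~\ref{Du vs Dv(j)} and Theorem~\ref{sublime}. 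Your proposal skips the map $w_j$ entirely and the whole nondegeneracy bookkeeping, so as written it would not close. Beyond that, the paper keeps $\ep$-dependent choices of the constants $A,B,\delta,\bar\ep,\sigma$ and uses the Dini condition to make the error terms summable; that part matches your outline and is straightforward once the decay inequality is legitimately available.
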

\subsection{General measure data problems}\label{generalmeasure} Solutions to measure data problems are usually found by approximation procedures via solutions to more regular problems. These are of the type
\eqn{appr}
$$
(u_h)_t-\divo\, a(x,t,Du_h)=\mu_h \in L^{\infty}\,, \qquad h \in \en\,, $$
where \rif{regas} holds for $u_h$ and $\mu_h$ is usually a convolution of $\mu$. The point is that solutions to measure data problems do not belong, in general, to the energy space. This section is also aimed at justifying that we may actually work under the {\em apparently additional assumption}~\rif{regas}. More precisely, the exact definition of SOLA, is given in the following:
\begin{definition}[\cite{B, KLPj, KLP}]\label{soladef} A SOLA (Solution Obtained as a Limit of Approximations) 
to \trif{maineq} is a distributional solution $u \in L^{p-1}(-T, 0;W^{1,p-1}(\Omega))$ to \trif{maineq} in $\Omega_T$, such that $u$ is the limit of solutions 
$$
u_h \in C^{0}(-T,0;L^{2}(\Omega))\cap L^{p}(-T,0;W^{1,p}(\Omega))
$$
to equations as \rif{appr} in the sense that
$
u_h \to u
$ in $L^{p-1}(-T,0;W^{1,p-1}(\Omega))$, $L^\infty \ni \mu_h \to \mu$ weakly in the sense of measures and such that
\eqn{convergencemeasures}
$$\limsup_{h} \,|\mu_h|(Q)\leq |\mu|(\lfloor Q\rfloor_{\rm par})$$
for every cylinder $Q =  B \times (t_1, t_2)  \subseteq \Omega_T$, where $B \subset \Omega$ is a bounded open subset.
\end{definition}
In the right hand side of \rif{convergencemeasures} appears the symbol $\lfloor Q\rfloor_{\rm par}$, which denotes the parabolic closure of $Q$ defined in \rif{parclo} below. For more on this kind of solutions see Remark \ref{solaremark} below; in particular, requiring \rif{convergencemeasures} is neither unnatural nor restrictive.
Our estimates remain valid for SOLA and, in fact, the following holds:
\begin{theorem}\label{solath} The statements of Theorems \ref{main1}, \ref{main3}, \ref{mainc1} and \ref{mainv1} continue to hold for SOLA $u \in L^{p-1}(-T,0;W^{1,p-1}(\Omega))$ to \trif{maineq}, under the only assumptions \trif{asp}-\trif{aspd}. As a consequence, the results in Corollaries \ref{globalb} and \ref{mainc1}  hold for SOLA as well. \end{theorem}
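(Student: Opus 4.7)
The plan is to pass to the limit $h\to\infty$ in the a priori estimates of Theorems \ref{main1}--\ref{mainv1} applied to the energy weak solutions $u_h$ that generate the given SOLA $u$. Since each $u_h$ enjoys \trif{regas} with $\mu_h\in L^{\infty}$, those theorems apply to $u_h$ without modification, so the work lies entirely in transferring the resulting pointwise inequalities to $u$ at Lebesgue points of $Du$. First I would extract, from $Du_h\to Du$ in $L^{p-1}(\Omega_T)$, a (not relabeled) subsequence along which $Du_h\to Du$ pointwise almost everywhere, and fix a Lebesgue point $(x_0,t_0)$ of $Du$ at which this pointwise convergence also holds. The averaged terms such as $\mean{Q_r(x_0,t_0)}(|Du_h|+s+1)^{p-1}\,dx\,dt$ then pass to the limit by the strong $L^{p-1}$-convergence, up to a harmless enlargement of the radius that can be absorbed afterwards.

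The main obstacle is the passage to the limit inside the truncated Riesz potentials ${\bf I}_1^{|\mu_h|}(x_0,t_0;r)$, because \trif{convergencemeasures} is only an upper semicontinuity statement and the parabolic closure $\lfloor Q\rfloor_{\rm par}$ appearing there is sharp: mass of $\mu_h$ could concentrate on lateral time-slices. I would exploit the Fubini identity
$$
{\bf I}_1^{|\mu_h|}(x_0,t_0;r) = \frac{1}{N-1}\int_{Q_r(x_0,t_0)} \bigl[ d_{\rm par}((x,t),(x_0,t_0))^{1-N} - r^{1-N}\bigr]\, d|\mu_h|(x,t)
$$
to recast the potential as an integral against the kernel $d_{\rm par}^{1-N}$, truncate that kernel at a small scale $\varepsilon>0$ to obtain a bounded continuous integrand, and then apply weak convergence of $\mu_h$ together with \trif{convergencemeasures} on a slightly enlarged parabolic cylinder $Q_{r'}$, $r'>r$. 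Sending first $r'\downarrow r$ and then $\varepsilon\downarrow 0$ (using finiteness of ${\bf I}_1^{|\mu|}$ at $(x_0,t_0)$ to handle the near-diagonal contribution) yields the one-sided bound $\limsup_h {\bf I}_1^{|\mu_h|}(x_0,t_0;r)\le c\,{\bf I}_1^{|\mu|}(x_0,t_0;r)$. Combined with the a.e.\ convergence $Du_h(x_0,t_0)\to Du(x_0,t_0)$, this delivers Theorem \ref{main3} for SOLA; Corollaries \ref{globalb} and \ref{mainl} then follow from their original proofs, and Theorem \ref{mainv1} is obtained by the same measure-convergence argument applied to $|\mu_h|(Q_r)/r^{N-1}$ in lieu of the potential.

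The intrinsic formulation of Theorem \ref{main1}, whose premise and conclusion both involve $\lambda$, requires a supplementary $\varepsilon$-argument because the implication must be transported from $u_h$ to $u$. Given $\lambda>0$ for which the hypothesis of Theorem \ref{main1} is satisfied by $u$, I would pick any $\lambda'>\lambda$: since $p\ge 2$ forces $Q_r^{\lambda'}\subset Q_r^{\lambda}$, the convergence arguments above guarantee that the analogous hypothesis with parameter $\lambda'$ is eventually satisfied by $u_h$, whence Theorem \ref{main1} applied to $u_h$ yields $|Du_h(x_0,t_0)|\le \lambda'$. Sending $h\to\infty$ along the a.e.-convergent subsequence and then $\lambda'\downarrow\lambda$ closes this case. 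Finally, Theorem \ref{mainc1} for SOLA follows from its original proof, which relies only on the pointwise bound of Theorem \ref{main3} (already extended above) together with the qualitative condition $\lim_{r\to 0}{\bf I}_1^\mu(\cdot;r)=0$, which is formulated independently of the notion of solution.
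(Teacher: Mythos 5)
Your plan is to pass to the limit $h\to\infty$ in the \emph{conclusions} of Theorems \ref{main1}--\ref{mainv1} applied to $u_h$. This is not the route the paper takes and, as written, it has a genuine gap at its core. The claim $\limsup_h {\bf I}_1^{|\mu_h|}(x_0,t_0;r)\le c\,{\bf I}_1^{|\mu|}(x_0,t_0;r)$ is false in general. Take $\mu=\delta_{(x_0,t_0)}$, a Dirac mass located at the very vertex $(x_0,t_0)$. Since the cylinders $Q_\varrho(x_0,t_0)=B(x_0,\varrho)\times(t_0-\varrho^2,t_0)$ exclude the upper cap, $(x_0,t_0)\notin Q_\varrho(x_0,t_0)$ and indeed $(x_0,t_0)\notin\lfloor Q_\varrho(x_0,t_0)\rfloor_{\rm par}$, so that ${\bf I}_1^{|\mu|}(x_0,t_0;r)=0$; yet for the backward-in-time mollified $\mu_h$ (the standard choice discussed in Remark \ref{solaremark}), the unit mass of $\mu_h$ lies strictly in the past of $t_0$ and enters $Q_\varrho(x_0,t_0)$ as soon as $\varrho\gtrsim h^{-1/2}$, so that ${\bf I}_1^{|\mu_h|}(x_0,t_0;r)\gtrsim \int_{h^{-1/2}}^{r}\varrho^{-N}\,d\varrho\to\infty$. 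Finiteness of ${\bf I}_1^{|\mu|}$ at $(x_0,t_0)$ says nothing about how $|\mu_h|$ concentrates near $(x_0,t_0)$, so it cannot ``handle the near-diagonal contribution'' as you propose; and since this blow-up already occurs for each fixed $\varepsilon>0$ in the near-origin piece $\int_0^{\varepsilon}|\mu_h|(Q_\varrho)\varrho^{-N}\,d\varrho$, sending $\varepsilon\downarrow 0$ last does not repair it. Such points are genuine Lebesgue points of $Du$ (the backward averages defining the Lebesgue point are unaffected by a mass sitting at $t=t_0$), so the theorems are nontrivial there and your limit argument yields nothing.

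The paper avoids this difficulty by \emph{not} passing to the limit in the conclusions. In Lemma \ref{lemma:u-w comparisonApp} it constructs the comparison maps $w_j$ directly for the SOLA $u$ as local limits of the Cauchy--Dirichlet solutions $\tilde w_h$ on a fixed $Q_j$ with boundary data $u_h$; uniform interior regularity of the $\tilde w_h$, Fatou's lemma on the left-hand sides, and \rif{convergencemeasures} on the right-hand sides then reproduce the comparison estimates of Lemmas \ref{lemma:u-w comparison} and \ref{cor:comparison} for $u$ and $w_j$, with $|\mu|(\lfloor Q_j\rfloor_{\rm par})$ in place of $|\mu|(Q_j)$. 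The whole double-induction proof of Theorem \ref{main1} (and the proofs of Theorems \ref{mainc1} and \ref{mainv1}) is then re-run verbatim with these lemmas; the appearance of $\lfloor Q_j\rfloor_{\rm par}$ is harmless because one may perturb the dyadic radii so that \rif{countably} holds. This is the architectural change you would need: establish the comparison lemmas at the level of $u$ itself and repeat the iteration, rather than attempting to transport the final pointwise inequality from $u_h$ to $u$.
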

\subsection{Comparison with nonlinear estimates}\label{compi}
Theorem \ref{main1} improves the previously potential estimates via nonlinear potentials \cite{KMW}, bringing them {\em to the desired optimal level}. Based on elementary dimension analysis we conjecture that the result of Theorem \ref{main1} cannot be improved by the use of any other nonlinear potential. Theorem \ref{main1} is optimal also with respect to the regularity assumed on the coefficients dependence $x \mapsto a(x, \cdot)$, that is \rif{aspd}. Indeed, already in the linear elliptic case $$\divo\, (\tilde a(x)Du)=0\,,$$ Dini-continuity of elliptic coefficients matrix $\tilde a(x)$ is essential in order to get gradient boundedness. Merely continuous coefficients are not sufficient to ensure that the gradient belongs even to BMO, see \cite{maz}. 

Now, let us see how Theorem \ref{main1} improves the previously known estimates via nonlinear potentials. In \cite{KMW} a Wolff potential type gradient bound has been obtained for equations without coefficients of the type $u_t- \divo\, a(Du)=\mu$. More precisely, in \cite{KMW} we introduced the following {\em intrinsic Wolff potentials}:
$$
{\bf W}_{\lambda}^\mu (x_0,t_0;r):=\int_0^{r} \left(\frac{|\mu|(Q^{\lambda}_\varrho(x_0,t_0))}{\lambda^{2-p} \varrho^{N-1}} \right)^{1/(p-1)} \, \frac{d\varrho}{\varrho}\,,\qquad N=n+2 \qquad \lambda >0\,.
$$
See \cite{HM, HW, KM} for more on Wolff potentials. We then proved the existence of a universal constant $c_w \equiv c_w(n,p,\ratio)$ for which
\eqn{gen}
$$
\begin{array}{c}
\displaystyle c_w{\bf W}_{\lambda}^\mu (x_0,t_0;r)+c_w\left(\mean{Q_r^\lambda(x_0,t_0)} (|Du|+s)^{p-1} \, dx \, dt\right)^{1/(p-1)} \leq \lambda \\  [10 pt]\hspace{4cm} \Longrightarrow |Du(x_0,t_0)| \leq \lambda
\end{array}
$$
holds.
Let us now show that \rif{unoimpli} implies \rif{gen}, for
\eqn{cw}
$$c_w := \left[4^{N+1}(\log 2)^{2-p}c\right]^{1/(p-1)}+4^{N}c$$ and radii $r \leq R_0$, where $c$ and $R_0$ are the constants appearing in the statement of Theorem \ref{main1} (no restriction on radii appears in the case of equations as in \rif{senzat}). With $r_i=r/2^i$ for integers $i \geq 0$, H\"older's inequality for series (as it is $p \geq 2$ here), gives
\begin{eqnarray*}
{\bf I}_{1,\lambda}^\mu(x_0,t_0;r/2) & = &\sum_{i=1}^\infty \int_{r_{i+1}}^{r_i} \frac{|\mu|(Q^{\lambda}_{\varrho}(x_0,t_0))}{ \varrho^{N-1}} \frac{d\varrho}{\varrho}\\ &\leq & 2^{N-1}\log 2 \sum_{i=1}^\infty \frac{|\mu|(Q^{\lambda}_{r_i}(x_0,t_0))}{r_i^{N-1}} \\
&\leq & 2^{N-1}\log 2\left[\sum_{i=1}^\infty \left(\frac{|\mu|(Q^{\lambda}_{r_i}(x_0,t_0))}{r_i^{N-1}}\right)^{1/(p-1)} \right]^{p-1}\\
%&= & 4^{N}(\log 2)^{2-p}\left[\sum_{i=1}^\infty \left(\frac{|\mu|(Q^{\lambda}_{r_i}(x_0,t_0))}{r_{i-1}^{N-1}}\right)^{1/(p-1)} \int_{r_{i}}^{r_{i-1}}\frac{d\varrho}{\varrho} \right]^{p-1}\\
&\leq & 4^{N}(\log 2)^{2-p}\left[\sum_{i=1}^\infty  \int_{r_{i}}^{r_{i-1}}\left(\frac{|\mu|(Q^{\lambda}_{\varrho}(x_0,t_0))}{\varrho^{N-1}}\right)^{1/(p-1)}\frac{d\varrho}{\varrho} \right]^{p-1}\\
&= & 4^{N}(\log 2)^{2-p}\lambda^{2-p}\left[{\bf W}_{\lambda}^\mu (x_0,t_0;r)\right]^{p-1}\\
&\leq & 4^{N}(\log 2)^{2-p}c_w^{1-p}\lambda \leq \frac{\lambda}{2c}\,.
\end{eqnarray*}
Notice that to derive the second-last estimate we have use the inequality in the first line of \rif{gen}, while in the last estimate we have used \rif{cw}. Using the inequality in the last display, again \rif{cw} and finally the left hand hand side of \rif{gen}, a standard manipulation gives
$$
c{\bf I}_{1,\lambda}^\mu(x_0,t_0;r/2)+ c\left(\mean{Q_{r/2}^{\lambda}(x_0,t_0)} (|Du|+s)^{p-1} \, dx \, dt\right)^{1/(p-1)}   \leq \lambda
$$
so that the right hand side inequality in \rif{gen}, that is $|Du(x_0,t_0)| \leq \lambda$, follows applying Theorem \ref{main1}.

The improvement from \rif{gen} to \rif{unoimpli} is rather strong both from the viewpoint of the theoretical significance - as now the regularity theory of quasilinear equations is unified with that of the heat equation up to spatial gradient continuity - and from the one of the consequences. For instance, when looking for sharp criteria for establishing $Du \in L^{\infty}$ (and eventually in $C^0$) in terms of Lorentz spaces, the result in \rif{gen} gives that
\eqn{condi1} 
$$
\mu \in L(N,1/(p-1)) \Longrightarrow Du \in L^\infty  \,,
$$
where we recall that
$$
\mu \in L(N,1/(p-1)) \Longleftrightarrow \int_0^{\infty} \lambda^{\frac{2-p}{p-1}}|\{(x,t) \in \Omega_T \, : \, |\mu(x,t)| > \lambda \}|^{\frac{1}{N(p-1)}} d \lambda < \infty\,.
$$
The condition of Corollary \ref{mainl} is clearly stronger that the one in \rif{condi1}, as $L(N,1)\subset L(N,1/(p-1))$, this inclusion being strict for $p>2$. For further properties of Lorentz spaces we refer for instance to \cite{steinweiss}. Moreover, it is easy too see that more refined criteria in terms of density/concentration are provided by \rif{unoimpli} with respect to \rif{gen} when $\mu$ is genuinely a measure. We also remark that Wolff potentials play a major role in the analysis of the fine properties of quasilinear equations (see for instance \cite{KMp, KM, PV1, PV2, TW}); since the estimates contained in this paper are stronger than those involving Wolff potentials, we expect they will have a similar, if not stronger, impact in the future. 
\subsection{Techniques}
Finally, a few comments on the methods used in this paper. Several new ingredients are needed to deal with the parabolic case with respect to the previous elliptic one \cite{KMl}, and the proofs depart considerably from those proposed before. The proof of Theorem \ref{main1} involves a very delicate, double-step induction procedure based on a few ingredients that re-shuffle, in a pointwise manner, some classical methods used in linear Calder\'on-Zygmund theory and combine them with the use of intrinsic geometry. Extensive use of nonlinear potential theoretic methods and regularity theory is made throughout. Let us briefly describe the heuristic used here by specializing for simplicity to the model case \rif{modello} and considering $\mu \in L^1$; the essence relies in careful procedure that allows to ``linearize the equation" and control the possible degeneracy in a precisely quantified way at every scale. The following argument will be purely formal. We consider a dyadic shirking sequence of intrinsic cylinders for $i \geq 0$
$$
\ldots Q_{r_{i+1}}^\lambda(x_0,t_0) \subset Q_{r_{i}}^\lambda(x_0,t_0) \subset Q_{r_{i-1}}^\lambda(x_0,t_0)\ldots \qquad r_i:= \sigma^ir
$$
where $\sigma \in (0,1)$ is a constant depending only on $n,p$ and $\lambda$ is as in \rif{unoimpli}. A suitable exit time argument, together with very careful regularity estimates for solutions to homogeneous equations, gives
\eqn{lowerbounds}
$$
\lambda - \mbox{``quantified error"} \lesssim |Du|  \quad \mbox{on} \ Q_{r_{i}}^\lambda(x_0,t_0) \  \ \mbox{for $i$ large enough}\,.
$$
This is something that in a way we can always assume, starting from an exit time index, otherwise we are going to get an opposite inequality for the integral averages $|(Du)_{Q_{r_{i}}^\lambda(x_0,t_0)}| \lesssim \lambda$, that eventually leads to an immediate proof of $|Du(x_0,t_0)| \leq \lambda $, and therefore of \rif{unoimpli}. Assuming \rif{lowerbounds} leads to implement a delicate iteration procedure whose finally outcome is the following inequality:
\eqn{boundheu}
$$
\mean{Q_{r_{j}}^\lambda(x_0,t_0)} |Du|^{p-1}\, dx \, dt \lesssim \lambda^{p-1}$$
that again implies \rif{unoimpli}. Note that proving \rif{boundheu} not only allows to implement the iteration but also allows  to use, at each scale, the intrinsic geometry methods (compare with \rif{loclink}). As emphasized, the key to the proof of Theorem \ref{main1} is the lower bound in \rif{lowerbounds}; let us now give a formal but yet convincing argument showing how a condition as \rif{lowerbounds} allows to get \rif{unoimpli} and why intrinsic Riesz potentials and and conditions as \rif{unoimpli} naturally occur. Let us consider then \rif{lowerbounds} to be satisfied on $Q_{r}^{\lambda}(x_0,t_0)$ with a null error, i.e.,  $\lambda \lesssim |Du|$, and let us assume the first inequality in \rif{unoimpli}. The lower bound $\lambda \lesssim |Du|$ in turn allows to gain coercivity enough to treat the equation in display \rif{modello} as a heat equation with a coefficient, that is
$
u_t- \divo\, (\lambda^{p-2}Du)=\mu
$
that we can rewrite as
$$
\lambda^{2-p} u_t- \triangle u= \lambda^{2-p}\mu \qquad \mbox{in} \ Q_{r}^\lambda(x_0,t_0)\,.
$$
Now the effect of the use of intrinsic geometry and of the intrinsic Riesz potential shows-up. Changing variables and introducing
$$
v(x,t):= \frac{u(x_0+rx, t_0+\lambda^{2-p}r^2t)}{r}\,, \quad \tilde \mu(x,t):= \lambda^{2-p}r\mu(x_0+rx, t_0+\lambda^{2-p}r^2t)
$$
for $(x,t) \in Q_1= B_1\times (-1,0)$, we have
\eqn{heu0}
$$v_t- \triangle v= \tilde\mu\,.$$ Next, we apply the standard Riesz potential bound for solutions to \rif{heu0}, that is
\eqn{heu1}
$$
|Dv(0,0)|  \lesssim {\bf I}_{1}^{|\tilde \mu|}(0,0;1)+ \left(\mean{Q_{1}(0,0)} |Dv|^{p-1} \, dx \, dt\right)^{1/(p-1)} \,. % \label{appi}
$$
Changing variables back to $\mu$ we notice
\begin{eqnarray*}{\bf I}_{1}^{|\tilde \mu|}(0,0;1) &= &
\lambda^{2-p} r \int_0^1 \mean{Q_\varrho(0,0)} |\mu(x_0+rx, t_0+\lambda^{2-p}r^2t)|\, dx\, dt \, d\varrho\\
&= &  \lambda^{2-p} r \int_0^{1} \mean{Q_{\varrho r}^\lambda(x_0,t_0)} |\mu(x,t)|\, dx\, dt \, d\varrho \\
&= &  \lambda^{2-p} \int_0^{r} \mean{Q_{\varrho}^\lambda(x_0,t_0)} |\mu(x,t)|\, dx\, dt \, d\varrho =   \lambda^{2-p} {\bf I}_{1, \lambda}^\mu(x_0,t_0;r) \lesssim \lambda\,,
\end{eqnarray*}
where in the last inequality we have used the first line in \rif{unoimpli}. Finally, scaling back to $u$, using the previous inequality together with the first line of \rif{unoimpli}, we conclude with
$
|Du(x_0,t_0)| = |Dv(0,0)| \lesssim \lambda$, that is the proof of \rif{unoimpli}. The one outlined in the last lines is only a heuristic argument used to show how intrinsic Riesz potentials play a decisive and natural role in this context, but its rigorous implementation is highly nontrivial and involves a refined %%%TUOMO : very delicate
double induction argument that exploits rather subtle aspects of regularity theory of degenerate parabolic equations. Several tools are used here. One of the main points is that the analysis of the relevant iterating quantities must be performed at two different levels, using different energy spaces. Indeed, since we are  dealing essentially with measure data problems, the natural spaces involved are larger than $L^{p}(-T,0;W^{1,p})$. This, together with the lack of reverse H\"older type inequalities and homogeneous estimates which is typical when dealing with degenerate parabolic equations, reflects in a simultaneous use of two different spaces, namely $L^{1}(-T,0;W^{1,1})$ and $L^{p-1}(-T,0;W^{1,p-1})$. Eventually, a very delicate interplay between local regularity of solutions to homogeneous equations and comparison estimates must be exploited in the framework of intrinsic geometries thanks to exit time arguments and the use of intrinsic Riesz potentials. The proof of Theorem \ref{main1} eventually opens the way to the continuity analysis and in particular to Theorem \ref{mainc1}. For this we shall readapt the iteration procedure of Theorem \ref{main1} to estimate oscillations rather than the size of the gradient. This in turn imposes to consider a priori infinite many exit times arguments used to control the degeneracy of the equation via the oscillations of the gradient, and vice-versa.

\section{Preparations}
\subsection{General notation} In what follows we denote by $c$ a general positive constant, possibly varying from line to line; special occurrences will be denoted by $c_1, c_2, \bar c_1, \bar c_2$ or the like. All these constants will always be larger or equal than one; moreover relevant
dependencies on parameters will be emphasized using parentheses, i.e., ~$c_{1}\equiv c_1(n,p,\ratio)$ means that $c_1$ depends only on $n,p,\ratio$. We denote by $B(x_0,r):=\{x \in \er^n \, : \,  |x-x_0|< r\}$ the open ball with center $x_0$ and radius $r>0$; when not important, or clear from the context, we shall omit denoting the center as follows: $B_r \equiv B(x_0,r)$. Unless otherwise stated, different balls in the same context will have the same center. We shall also denote $B \equiv B_1 = B(0,1)$
if not differently specified. 
In a similar fashion standard and intrinsic parabolic cylinders with vertex $(x_0,t_0)$ and width $r>0$ have been defined in \rif{classicalc} and \rif{intrinsiccc}, respectively. When the vertex will not be important in the context or it will be clear that all the cylinders occurring in a proof will share the same vertex, we shall omit to indicate it, simply denoting $Q_r$ and $Q_r^\lambda$ for the cylinders in \rif{classicalc} and \rif{intrinsiccc}, respectively. We recall that if $Q = \mathcal A \times (t_1, t_2)$ is a cylindrical domain, the usual parabolic boundary of $Q$ is
$\partial_{\rm par} Q:= (\mathcal A \times \{t_1\}) \cup (\partial \mathcal A \times [t_1,t_2))$,
and this is nothing else but the standard topological boundary without the upper cap $\bar{\mathcal A} \times \{t_2\}$. Accordingly, we define the parabolic closure of $Q$ as \eqn{parclo}
$$
\lfloor Q\rfloor_{\rm par} := Q \cup \partial_{\rm par} Q\,.
$$
With $\mathcal O \subset \er^{n+1}$ being a measurable subset with positive measure, and with $g \colon \mathcal O \to \er^n$ being a measurable map, we shall denote by  $$
   (g)_{\mathcal O} \equiv \mean{\mathcal O}  g(x,t) \, dx \, dt := \frac{1}{|\mathcal O|}\int_{\mathcal O}  g(x,t) \, dx\, dt
$$
its integral average; here $|\mathcal O|$ denotes the Lebesgue measure of $\mathcal O$. A similar notation is adopted if the integral is only in space or time.
 In the rest of the paper we shall use several times the following elementary property of integral averages:
\eqn{mediaprop}
$$
\left(\mean{\mathcal O} |g-(g)_{\mathcal O}|^{q}\, dx \, dt\right)^{1/q}\leq 2\left(\mean{\mathcal O} |g-\gamma|^{q}\, dx \, dt\right)^{1/q}\,,
$$
whenever $\gamma \in \er^n$ and $q \geq 1$. The oscillation of $g$ on $\mathcal O$ is instead defined as
$$
\osc_{\mathcal O}\, g := \sup_{(x,t),(\tilde x, \tilde t)\in \mathcal O}|g(x,t)-g(\tilde x,\tilde t)|\,.
$$
Finally, we remark that we shall denote the partial derivative with respect to time of a function $u$ both by $u_t$ and by $\partial_t u$; moreover, the letter $\lambda$ will always denote a positive number. Further relevant notation is at the beginning of the next section.
\subsection{Comparison maps} \label{comp maps} %%%ROS rewording in the following lines
The basic setup in this section is tailored to the needs of the proof of Theorem \ref{main1} and subsequent results. Therefore we shall consider $u$ to be an energy solution to \trif{maineq} under the assumptions \trif{asp}-\trif{aspd} and \trif{addmes} until the end Section \ref{compi2}; only in Section \ref{SOLA} we shall discuss the general case, thereby treating SOLA and discarding assumption \rif{addmes}. With a point $(x_0,t_0) \in \Omega_T$ being fixed, and given an intrinsic cylinder of the type $$Q_{r}^{\lambda}(x_0,t_0) \equiv B(x_0,r) \times (t_0 - \lambda^{2-p} r^2,t_0)$$
such that $Q_{2r}^{\lambda}(x_0,t_0) \subset \Omega_T$, we consider a  family of nested parabolic cylinders
\eqn{shri}
$$
Q_j \equiv B_j \times T_j \equiv B(x_0,r_j) \times (t_0 - \lambda^{2-p} r_j^2,t_0) \subseteq \Omega_T \,, \qquad r_j := \sigma^j r\,,
$$
for a fixed decay parameter $\sigma \in (0,1/4)$; notice that we always have
$$
\ldots Q_{j} \subset \frac 14 Q_{j-1} \subset Q_{j-1}\ldots
$$
Accordingly, we consider their dyadic, parabolic dilations
\[
\tau Q_j \equiv Q_{\tau r_j}^{\lambda}(x_0,t_0)\equiv \tau  B_j \times \tau  T_j \equiv B(x_0,\tau r_j) \times (t_0 - \lambda^{2-p} (\tau r_j)^2,t_0)
\]
for $\tau >0$; notice that here, slightly abusing the notation, we are denoting $$\tau  T_j \equiv (t_0 - \lambda^{2-p} (\tau r_j)^2,t_0) \,.$$ A similar notation will occur several times in rest of the paper. Now, let
$$
   w_j \in C^0(T_j ; L^2(B_j))\cap L^p(T_j ; W^{1,p}(B_j))
$$
be the unique solution to the Cauchy-Dirichlet problem
\begin{equation}\label{CD-local}
    \begin{cases}
    \partial_t w_j-\divo\, a(x,t,Dw_j)=0&\mbox{in $ Q_j$}
    \\
    \hfill w_j=u& \mbox{on $\partial_{\rm par} Q_j$\,.}
    \end{cases}
\end{equation}
After having defined $w_j$, we also define
$$
   v_j \in C^0(\tfrac12 T_j ; L^2(\tfrac12 B_j))\cap L^p(\tfrac12 T_j ; W^{1,p}(\tfrac12 B_j))
$$
as the unique solution to the frozen Cauchy-Dirichlet problem
\begin{equation}\label{CD-local v}
    \begin{cases}
    \partial_t v_j-\divo\, a(x_0,t,Dv_j)=0 \ \ &\mbox{in $ \frac12 Q_j$}
    \\
    \hfill v_j=w_j& \mbox{on $\partial_{\rm par} \left(\frac12 Q_j\right)$\,.}
    \end{cases}
\end{equation}

\subsection{A priori estimates for comparison maps} We now derive various a priori estimates for $w_j$ and $v_j$, starting from $L^\infty$-bounds.
When turning our attention to $w_j$ we need to use the results recently established in \cite{KMjmpa}, that allow to deal with equations with non-constant, Dini-continuous spatial coefficients. We start with a statement in terms of intrinsic geometry.
\begin{theorem}[Intrinsic gradient bound] \label{thm:Dw(j) bounded 1}
Let $w_j$ be as in \eqref{CD-local}. There exists a positive radius $R_1\equiv R_1(n,p,\nu,L,\omega(\cdot))$ and
a constant $c_1\equiv c_1(n,p,\nu,L)$ such that if $\varrho \in (0,R_1)$ and
\[
% Q_r^{\lambda_0} \equiv
Q_\varrho^{\lambda_0}(x_1,t_1)  := B(x_1,\varrho) \times (t_1 -\lambda_0^{2-p}\varrho^2,t_1)  \subset Q_j\,,
\]
then the implication
\begin{equation} \nonumber \label{Dw(j) bounded 1}
    c_1 \left(\mean{Q_\varrho^{\lambda_0}(x_1,t_1)}(|Dw_j|+s)^p \,dx \,dt\right)^{1/p} \leq \lambda_0 \quad \Longrightarrow \quad |Dw_j(x_1,t_1)| \leq \lambda_0
\end{equation}
holds.
\end{theorem}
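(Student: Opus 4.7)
The plan is to reduce the intrinsic statement on $Q_\varrho^{\lambda_0}(x_1,t_1)$ to a standard non-intrinsic $L^\infty$ gradient bound on the unit parabolic cylinder $Q_1(0,0)$ for homogeneous equations with Dini-continuous spatial coefficients, which is available from \cite{KMjmpa}. First I would perform the intrinsic rescaling
\[
\tilde w(y,\tau):= \frac{w_j(x_1+\varrho y,\, t_1+\lambda_0^{2-p}\varrho^2 \tau)}{\lambda_0\,\varrho}\,, \qquad (y,\tau)\in Q_1(0,0)\,,
\]
together with the rescaled vector field
\[
\tilde a(y,\tau,z):= \lambda_0^{\,1-p}\, a\!\left(x_1+\varrho y,\, t_1+\lambda_0^{2-p}\varrho^2 \tau,\, \lambda_0 z\right).
\]
A routine chain-rule computation shows that $\tilde w$ is an energy solution of $\partial_\tau \tilde w - \divo\,\tilde a(y,\tau,D\tilde w)=0$ in $Q_1$.

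Next I would verify that $\tilde a$ satisfies the structural conditions \trif{asp} with the \emph{same} constants $\nu,L$ and exponent $p$, with the rescaled non-degeneracy parameter $\tilde s := s/\lambda_0$ and with the rescaled modulus of continuity $\tilde\omega(\rho):= \omega(\varrho \rho)$. The Dini condition \trif{aspd} is preserved, and the crucial feature is
\[
\int_0^1 \tilde\omega(\rho)\,\frac{d\rho}{\rho} \;=\; \int_0^\varrho \omega(\tau)\,\frac{d\tau}{\tau}\;\longrightarrow\;0 \quad \text{as } \varrho \to 0.
\]
I would then fix $R_1\equiv R_1(n,p,\nu,L,\omega(\cdot))$ small enough so that, for every $\varrho\in(0,R_1)$, the right-hand side above falls below the universal threshold under which \cite{KMjmpa} provides a sup-gradient bound whose constant depends only on $n,p,\nu,L$.

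Applying that bound to $\tilde w$ on $Q_1(0,0)$ yields $c_0=c_0(n,p,\nu,L)$ such that
\[
c_0\left(\mean{Q_1(0,0)}(|D\tilde w|+\tilde s)^p\,dy\,d\tau\right)^{1/p}\leq 1 \quad\Longrightarrow\quad |D\tilde w(0,0)|\leq 1\,.
\]
Undoing the change of variables one finds $D\tilde w = Dw_j/\lambda_0$ and $\mean{Q_1(0,0)}(|D\tilde w|+\tilde s)^p = \lambda_0^{-p}\mean{Q_\varrho^{\lambda_0}(x_1,t_1)}(|Dw_j|+s)^p$, so the implication above is exactly the claimed one with $c_1:=c_0$. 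The only delicate point — and the step I expect to be the real obstacle — is not the scaling itself but bookkeeping the effect of $\omega(\cdot)$: all dependence on the modulus must be absorbed into the radius $R_1$ rather than into the multiplicative constant, which forces the use of the Dini-continuous sup-gradient theorem of \cite{KMjmpa} in its sharp form where the constant depends only on the structural parameters $n,p,\nu,L$.
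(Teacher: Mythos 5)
Your proposal is correct and matches the paper's approach in substance: the paper's proof of this theorem is simply a citation to \cite{KMjmpa} (its Theorems 1.1, 1.3 and 4.1, with Lemma 4.3 replacing Lemma 4.2), where the intrinsic statement is already established, and the scaling $\tilde w(y,\tau)=w_j(x_1+\varrho y, t_1+\lambda_0^{2-p}\varrho^2\tau)/(\lambda_0\varrho)$ with $\tilde\omega(\rho)=\omega(\varrho\rho)$ is precisely the mechanism used there to make the Dini integral of the rescaled modulus small and thereby decouple the multiplicative constant $c_1$ from $\omega(\cdot)$, pushing all $\omega$-dependence into the radius restriction $R_1$. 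The one point you should make explicit when invoking the unit-cylinder bound is that the reference estimate must be uniform in the degeneracy parameter $\tilde s=s/\lambda_0\in[0,\infty)$, since $\tilde a$ carries a $\lambda_0$-dependent $s$-parameter; this uniformity is standard in \cite{KMjmpa} but is a required hypothesis for the scaling to close.
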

\begin{proof} This result has been proved and used in \cite[Theorem 1.1, Theorem 1.3, Theorem 4.1]{KMjmpa}. The proof is exactly the one given in the proof of \cite[Theorem 1.1]{KMjmpa}, once \cite[Lemma 4.3]{KMjmpa} is used instead of \cite[Lemma 4.2]{KMjmpa}, see also \cite[Remark 4.1]{KMjmpa}. We also remark that no restriction on $\varrho$ is necessary  when the vector field $a(\cdot)$ is independent of $x$.
\end{proof}
The pointwise bound of Theorem~\ref{thm:Dw(j) bounded 1} can be turned into an $L^\infty$-bound of exactly the same type proved by DiBenedetto \cite{DBbook} for equations with no coefficients, see also Theorem \ref{thm:Dv(j) bounded} below. Since we are going to cover the case of equations with measure data, where solutions with low degree of integrability naturally appear, we need to lower the $p$-integrability exponent to $(p-1)$ to get the correct form of a priori estimates. All this is done in the next
\begin{cor} \label{cor:Dw(j) bounded 2}
Let $w_j$ be as in \eqref{CD-local} and $R_1$ as in Theorem~\ref{thm:Dw(j) bounded 1}. There exists a constant
$c_2\equiv c_2(n,p,\nu,L)$ such that if $r \in (0,R_1)$, then
%\begin{equation}\label{Dw(j) bounded}
$$    \sup_{\tau_m Q_j} |Dw_j|+s \leq c_2 (\lambda +s)+ \frac{c_2 \lambda^{2-p}}{2^{n+2}(1-\tau_m)^{n+2}} \mean{Q_j} (|Dw_j|+s)^{p-1} \, dx \, dt
$$%\end{equation}
holds whenever $\tau_m \in (0,1)$.
In particular, we have
%\eqn{unmezzo}
$$    \sup_{\frac12 Q_j} |Dw_j|+s \leq c_2 (\lambda +s)+ c_2 \lambda^{2-p} \mean{Q_j} (|Dw_j|+s)^{p-1} \, dx \, dt\,.
$$
\end{cor}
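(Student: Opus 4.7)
The plan is to combine the pointwise intrinsic criterion of Theorem~\ref{thm:Dw(j) bounded 1} with a DiBenedetto-style absorption argument. For $\tau\in(0,1)$ introduce
$$
\Phi(\tau):=\sup_{\tau Q_j}(|Dw_j|+s),\qquad A:=\lambda^{2-p}\mean{Q_j}(|Dw_j|+s)^{p-1}\,dx\,dt.
$$
I would first establish the interpolation inequality
$$
\Phi(\tau_1)\leq\tfrac12\,\Phi(\tau_2)+c(\lambda+s)+\frac{c\,A}{(\tau_2-\tau_1)^{n+2}} \qquad \mbox{for every }\tau_m\leq\tau_1<\tau_2<1,
$$
with $c\equiv c(n,p,\nu,L)$. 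A standard Giaquinta--Giusti absorption lemma then yields
$$
\Phi(\tau_m)\leq c(\lambda+s)+\frac{c\,A}{(1-\tau_m)^{n+2}},
$$
which coincides with the first stated bound once the inconsequential $2^{n+2}$ factor is absorbed into $c_2$; specialising to $\tau_m=1/2$ produces the second display.

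To prove the interpolation, pick $(x_1,t_1)\in\tau_1 Q_j$ and set $M_1:=|Dw_j(x_1,t_1)|$. We may assume $M_1>c_1(\lambda+s)$ (otherwise the pointwise bound is already of the desired form), so in particular $M_1\geq\lambda$. Fix $\varrho:=(\tau_2-\tau_1)r_j$: since $p\geq 2$ and $M_1\geq\lambda$ imply $M_1^{2-p}\leq\lambda^{2-p}$, the intrinsic time-length $M_1^{2-p}\varrho^2$ is dominated by $\lambda^{2-p}((\tau_2-\tau_1)r_j)^2$, which guarantees $Q_\varrho^{M_1}(x_1,t_1)\subset\tau_2 Q_j\subset Q_j$. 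Applying Theorem~\ref{thm:Dw(j) bounded 1} contrapositively with levels $\lambda_0\uparrow M_1$ and passing to the monotone limit (the cylinders $Q_\varrho^{\lambda_0}(x_1,t_1)$ are set-theoretically decreasing in $\lambda_0$ for $p\geq 2$), we obtain
$$
M_1^p\leq c_1^p\mean{Q_\varrho^{M_1}(x_1,t_1)}(|Dw_j|+s)^p\,dx\,dt.
$$
Bounding one factor of $(|Dw_j|+s)$ by $\Phi(\tau_2)$ on $\tau_2 Q_j$, and then trading the intrinsic average for the one on $Q_j$ via
$$
\frac{|Q_j|}{|Q_\varrho^{M_1}(x_1,t_1)|}=\frac{\lambda^{2-p}r_j^{n+2}}{M_1^{2-p}\varrho^{n+2}},
$$
the $M_1^{2-p}$ cancellation produces $M_1^{2}\leq c\,\Phi(\tau_2)\,A\,(\tau_2-\tau_1)^{-(n+2)}$, and a Young inequality splits the resulting square root into $\tfrac12\Phi(\tau_2)+cA/(\tau_2-\tau_1)^{n+2}$. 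Taking the supremum over $(x_1,t_1)\in\tau_1 Q_j$ delivers the announced interpolation.

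The main obstacle is the intrinsic bookkeeping that ensures $Q_\varrho^{M_1}(x_1,t_1)\subset\tau_2 Q_j$: the time-length scales as $M_1^{2-p}$ rather than $\lambda^{2-p}$, and the inclusion works precisely because we may restrict to $M_1\geq\lambda$ and because $p\geq 2$ makes the map $\mu\mapsto\mu^{2-p}$ non-increasing. This same monotonicity is what ultimately produces the characteristic weight $\lambda^{2-p}$ in front of the $L^{p-1}$ average on the right-hand side. The restriction $r\leq R_1$ is inherited verbatim from Theorem~\ref{thm:Dw(j) bounded 1}, and the ``no restriction'' clause for $x$-independent vector fields propagates through the argument without change.
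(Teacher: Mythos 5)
Your proposal is correct and follows the same skeleton as the paper's proof: an interpolation inequality between $\sup_{\tau_1 Q_j}$ and $\sup_{\tau_2 Q_j}$ with the singular factor $(\tau_2-\tau_1)^{-(n+2)}$, then absorption via the iteration lemma (the paper's Lemma~\ref{simpfun}), with the $2^{n+2}$ normalization absorbed into $c_2$ exactly as you say. The only genuine difference is how the interpolation step is extracted from Theorem~\ref{thm:Dw(j) bounded 1}. The paper defines the explicit level
$\lambda_0:=\tfrac12\sup_{\tau_2 Q_j}|Dw_j|+\lambda+s+2c_1^p(\tau_2-\tau_1)^{-(n+2)}\lambda^{2-p}\mean{Q_j}(|Dw_j|+s)^{p-1}\,dx\,dt$
and verifies the hypothesis of the theorem \emph{directly} at that level (bounding $(|Dw_j|+s)^p\leq 2\lambda_0(|Dw_j|+s)^{p-1}$ on the intrinsic cylinder and using the volume ratio), so the conclusion $|Dw_j|\leq\lambda_0$ on $\tau_1 Q_j$ is the interpolation inequality with no further manipulation. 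You instead run the theorem contrapositively at levels $\lambda_0\uparrow M_1=|Dw_j(x_1,t_1)|$, obtain the reverse bound $M_1^p\leq c_1^p\mean{Q_\varrho^{M_1}(x_1,t_1)}(|Dw_j|+s)^p\,dx\,dt$, and then split by Young. Both are legitimate and use the same inclusion check ($\tau_1^2+(\tau_2-\tau_1)^2\leq\tau_2^2$ together with $p\geq 2$ and the level being $\geq\lambda$) and the same constant/radius bookkeeping; your route costs the extra limiting argument with continuity of the averages in the intrinsic parameter (for which you do need the intermediate levels to satisfy $\lambda_0\geq\lambda$ so that the cylinders stay inside $Q_j$ — your restriction $M_1>c_1(\lambda+s)$ guarantees this) plus a Young inequality, whereas the paper's explicit choice of $\lambda_0$ closes the verification in a single computation.
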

\begin{proof}
Define
\[
\lambda_0\equiv \lambda_0(\tau, \tau') :=  \frac12 \sup_{\tau Q_j} |Dw_j| +\lambda +s  + \frac{2 c_1^p}{(\tau-\tau')^{n+2}} \lambda^{2-p} \mean{Q_j} (|Dw_j|+s)^{p-1} \, dx \, dt
\]
whenever $\tau_m \leq \tau' < \tau \leq 1$, where $c_1 \equiv c_1(n,p,\nu,L)$ is as in Theorem~\ref{thm:Dw(j) bounded 1}. As $\lambda_0 \geq \lambda$ and $p \geq 2$, we clearly have that for $\delta := \tau-\tau'$ the inclusion
\[
Q_{\delta r_j}^{\lambda_0}(x_1,t_1):=B(x_1,\delta r_j) \times (t_1 -\lambda_0^{2-p}(\delta r_j)^2,t_1) \subset \tau Q_j=\tau Q_{r_j}^{\lambda}(x_0,t_0)
\]
holds whenever $(x_1,t_1)\in  \tau' Q_j$.
Furthermore, by using the very definition of $\lambda_0$ we may estimate
\begin{eqnarray*}
\nonumber && c_1 \left(\mean{Q_{\delta r_j}^{\lambda_0}(x_1,t_1)}(|Dw_j|+s)^p \,dx \,dt\right)^{1/p}
\\ \nonumber  && \qquad \leq c_1 2^{1/p} \lambda_0^{1/p} \left(\mean{Q_{\delta r_j}^{\lambda_0}(x_1,t_1)}(|Dw_j|+s)^{p-1} \,dx \,dt\right)^{1/p}
\\ \nonumber  && \qquad \leq c_1 2^{1/p} \lambda_0^{1/p} \left(\frac{|Q_j|}{|Q_{\delta r_j}^{\lambda_0}(x_1,t_1)|} \right)^{1/p} \left(\mean{Q_j}(|Dw_j|+s)^{p-1} \,dx \,dt\right)^{1/p}
\\ \nonumber  && \qquad = c_1 2^{1/p} \lambda_0^{1/p} \left(\frac{\lambda^{2-p}}{\delta^{n+2} \lambda_0^{2-p}} \right)^{1/p} \left(\mean{Q_j}(|Dw_j|+s)^{p-1} \,dx \,dt\right)^{1/p}
\\ \nonumber  && \qquad \leq  c_1 2^{1/p} \lambda_0^{1/p} \left(\frac{\lambda^{2-p}}{\delta^{n+2} \lambda_0^{2-p}} \right)^{1/p} \left(\frac{\lambda _0 \delta^{n+2}}{2 c_1^p \lambda^{2-p}} \right)^{1/p} = \lambda_0\,.
\end{eqnarray*}
Therefore Theorem~\ref{thm:Dw(j) bounded 1} implies that $|Dw_j(x_1,t_1)| \leq \lambda_0$. But this holds for all $(x_1,t_1) \in \tau' Q_j$ and thus
\begin{equation}\nonumber \label{zadaa}
    \sup_{\tau' Q_j} |Dw_j| \leq \frac12 \sup_{\tau Q_j} |Dw_j| +\lambda + s + \frac{2 c_1^p \lambda^{2-p}}{(\tau-\tau')^{n+2}} \mean{Q_j} (|Dw_j|+s)^{p-1} \, dx \, dt
\end{equation}
follows. Lemma \ref{simpfun} below applied with $\varphi(\tau)= \sup_{\tau Q_j} |Dw_j|$ then concludes the proof by properly choosing the constant $c_2$.
\end{proof}
The nest result is a classical iteration lemma for the of which we refer to \cite[Lemma 6.1]{G}.
\begin{lemma}\label{simpfun}Let $\varphi: [\tau_m, 1]\to [0,\infty)$, with $\tau_m \in (0,1)$, be a function such that
$$
\varphi(\tau') \leq \frac12 \varphi(\tau) +K+ \frac{\B}{(\tau-\tau')^{n+2}}
\quad \mbox{holds for every}\ \ \tau_m \leq \tau' < \tau\leq 1\;,
$$
where $\B, K\geq 0$. Then $ \varphi(\tau_m) \leq
c(n)K+(1-\tau_m)^{-(n+2)}\B$.
\end{lemma}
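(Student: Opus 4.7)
The plan is a classical hole-filling iteration along a geometric sequence of parameters, absorbing a half of $\varphi$ at each step and summing up the error terms.

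First I fix an auxiliary ratio $\eta\in(0,1)$ large enough that $2\eta^{n+2}>1$; for instance $\eta:=2^{-1/(n+3)}$ works, since then $2\eta^{n+2}=2^{1/(n+3)}>1$, and $\eta$ depends only on $n$. Next I build the sequence
\[
\tau_i := 1-(1-\tau_m)\eta^i, \qquad i=0,1,2,\ldots,
\]
so that $\tau_0=\tau_m$, $\tau_i\nearrow 1$, and the gaps are
\[
\tau_{i+1}-\tau_i=(1-\eta)(1-\tau_m)\eta^i.
\]
Since $\tau_m\leq\tau_i<\tau_{i+1}\leq 1$, plugging $\tau'=\tau_i$, $\tau=\tau_{i+1}$ into the hypothesis yields
\[
\varphi(\tau_i)\leq\tfrac12\varphi(\tau_{i+1})+K+\frac{\B}{(1-\eta)^{n+2}(1-\tau_m)^{n+2}}\,\eta^{-i(n+2)}.
\]

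Now I iterate this inequality $k$ times starting from $i=0$; the recursion unwinds to
\[
\varphi(\tau_m)\leq 2^{-k}\varphi(\tau_k)+K\sum_{i=0}^{k-1}2^{-i}+\frac{\B}{(1-\eta)^{n+2}(1-\tau_m)^{n+2}}\sum_{i=0}^{k-1}\bigl(2\eta^{n+2}\bigr)^{-i}.
\]
Both geometric sums converge by the choice of $\eta$, and since $\varphi$ is tacitly bounded on $[\tau_m,1]$ (in the application $\varphi(\tau)=\sup_{\tau Q_j}|Dw_j|$ is finite and nondecreasing, hence bounded by $\varphi(1)<\infty$), the leading term $2^{-k}\varphi(\tau_k)$ vanishes as $k\to\infty$. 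Passing to the limit gives
\[
\varphi(\tau_m)\leq 2K+\frac{2\eta^{n+2}}{(2\eta^{n+2}-1)(1-\eta)^{n+2}}\,\frac{\B}{(1-\tau_m)^{n+2}},
\]
which is the claim with $c(n)=2$ and an explicit $n$-dependent coefficient in front of $\B(1-\tau_m)^{-(n+2)}$ coming from the choice of $\eta$.

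The only subtle point is justifying $2^{-k}\varphi(\tau_k)\to 0$: the statement is vacuous without a boundedness hypothesis on $\varphi$, but as noted this is automatic in the intended applications (where $\varphi$ is a supremum of a locally bounded quantity over shrinking cylinders). No other obstacle arises; the argument is entirely elementary once the right geometric spacing $\eta^i$ matching the weight $(\tau-\tau')^{-(n+2)}$ has been identified.
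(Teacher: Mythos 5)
Your argument is the standard hole-filling iteration and coincides with what the paper is implicitly appealing to when it cites \cite[Lemma~6.1]{G}; the paper supplies no proof of its own. Two side remarks. First, your conclusion necessarily carries an $n$-dependent constant in front of $\B(1-\tau_m)^{-(n+2)}$, whereas the lemma as printed claims coefficient $1$ there; that printed version is in fact false: with $K=0$ the constant function $\varphi\equiv 2\B(1-\tau_m)^{-(n+2)}$ satisfies the hypothesis (with equality at $\tau'=\tau_m,\ \tau=1$) but fails the conclusion. The coefficient should be read as $c(n)$, and this is all that the application in Corollary~\ref{cor:Dw(j) bounded 2} uses, since the extra constant is absorbed into $c_2$ anyway. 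Second, your concern about the term $2^{-k}\varphi(\tau_k)$ can be settled without invoking the application: since $\varphi$ maps into $[0,\infty)$, the value $\varphi(1)$ is finite, and applying the assumed inequality once more with $\tau'=\tau_k$, $\tau=1$ (so $\tau-\tau'=(1-\tau_m)\eta^k$) gives $\varphi(\tau_k)\leq\frac12\varphi(1)+K+\B(1-\tau_m)^{-(n+2)}\eta^{-k(n+2)}$, hence $2^{-k}\varphi(\tau_k)\leq 2^{-k}\bigl(\tfrac12\varphi(1)+K\bigr)+\B(1-\tau_m)^{-(n+2)}(2\eta^{n+2})^{-k}\to 0$, by the very same choice $2\eta^{n+2}>1$ already made. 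No extra boundedness hypothesis on $\varphi$ is required beyond what the statement already grants.
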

Corollary \ref{cor:Dw(j) bounded 2} obviously holds for $v_j$ too, and in this case it is a by now classical estimate of DiBenedetto \cite{DBbook}, as already mentioned above. See also for example~\cite{AM, KMW} for similar bounds. We report the statement for completeness.
\begin{theorem} \label{thm:Dv(j) bounded}
Let $v_j$ be as in~\eqref{CD-local v}. For a constant  $c_3 \equiv c_3(n,p,\nu,L)$ we have
$$
    \sup_{\frac14 Q_j} |Dv_j|+s \leq c_3 (\lambda +s)+ c_3 \lambda^{2-p} \mean{\frac12Q_j} (|Dv_j|+s)^{p-1} \, dx \, dt\,.
$$
\end{theorem}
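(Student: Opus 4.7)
The strategy is to reproduce, for $v_j$, the two-step scheme that was used for $w_j$ in Theorem~\ref{thm:Dw(j) bounded 1} and Corollary~\ref{cor:Dw(j) bounded 2}. The only real change is at the input level: for $v_j$ the underlying vector field is $(x,t)\mapsto a(x_0,t,\cdot)$, which is independent of the spatial variable, so the classical DiBenedetto intrinsic $L^\infty$ gradient bound for homogeneous degenerate parabolic equations applies directly (no Dini condition on coefficients is involved, and by the remark following Theorem~\ref{thm:Dw(j) bounded 1} no restriction on the radius is needed). This gives, as a starting point, a pointwise intrinsic bound: if $(x_1,t_1)\in \tfrac12 Q_j$ and $Q_\varrho^{\lambda_0}(x_1,t_1)\subset \tfrac12 Q_j$, then
\[
c\left(\mean{Q_\varrho^{\lambda_0}(x_1,t_1)}(|Dv_j|+s)^p\,dx\,dt\right)^{1/p}\leq\lambda_0\ \Longrightarrow\ |Dv_j(x_1,t_1)|\leq\lambda_0,
\]
for a universal $c\equiv c(n,p,\nu,L)$. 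This is exactly the $v_j$-version of Theorem~\ref{thm:Dw(j) bounded 1} and can be extracted from the standard theory in \cite{DBbook} (and the references mentioned in the text).

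With this pointwise statement at hand, the second step is the lowering of the $p$-integrability to $p-1$ via the same Giaquinta--Giusti-type iteration that yielded Corollary~\ref{cor:Dw(j) bounded 2}. Concretely, for $\tfrac14\leq\tau'<\tau\leq\tfrac12$ define
\[
\lambda_0(\tau,\tau'):=\tfrac12\sup_{\tau Q_j}|Dv_j|+\lambda+s+\frac{2c^{p}}{(\tau-\tau')^{n+2}}\lambda^{2-p}\mean{\frac12 Q_j}(|Dv_j|+s)^{p-1}\,dx\,dt.
\]
Since $\lambda_0\geq\lambda$ and $p\geq 2$, the intrinsic cylinder $Q_{(\tau-\tau')r_j}^{\lambda_0}(x_1,t_1)$ is contained in $\tau Q_j$ for every $(x_1,t_1)\in\tau'Q_j$. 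Then a direct computation --- interpolating $(\cdot)^p\leq 2\lambda_0(\cdot)^{p-1}$ on the intrinsic cylinder, enlarging the domain of integration to $\tfrac12 Q_j$ (which accounts for the factor $(\lambda^{2-p}/\lambda_0^{2-p})/(\tau-\tau')^{n+2}$ in volumes), and using the very definition of $\lambda_0$ --- verifies the hypothesis of the pointwise bound and therefore yields $|Dv_j(x_1,t_1)|\leq\lambda_0$. This computation is identical, line by line, to the one in the proof of Corollary~\ref{cor:Dw(j) bounded 2}, simply with $w_j$ replaced by $v_j$ and with the reference cylinder $Q_j$ replaced by $\tfrac12 Q_j$.

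Taking the supremum over $(x_1,t_1)\in\tau'Q_j$ gives
\[
\sup_{\tau' Q_j}|Dv_j|\leq \tfrac12\sup_{\tau Q_j}|Dv_j|+\lambda+s+\frac{2c^{p}\lambda^{2-p}}{(\tau-\tau')^{n+2}}\mean{\frac12 Q_j}(|Dv_j|+s)^{p-1}\,dx\,dt
\]
for all $\tfrac14\leq\tau'<\tau\leq\tfrac12$. Applying Lemma~\ref{simpfun} with $\varphi(\tau):=\sup_{\tau Q_j}|Dv_j|$ and $\tau_m=\tfrac12$ (after a harmless rescaling of the interval $[\tfrac14,\tfrac12]$ to $[\tau_m,1]$) absorbs the sup-term on the left and yields the claimed estimate with a constant $c_3\equiv c_3(n,p,\nu,L)$.

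The only step that is not purely mechanical is the first one, namely invoking the intrinsic pointwise bound for $v_j$; this is the point where one uses the specific structure of DiBenedetto's argument for homogeneous equations with measurable time coefficients. Once that is in place, the iteration and the book-keeping of intrinsic cylinders are a verbatim repetition of what was done for $w_j$, so the main ``obstacle'' here is essentially a bibliographical one: locating the correct intrinsic statement in the classical literature (hence the explicit references to \cite{DBbook, AM, KMW}) rather than a new argument.
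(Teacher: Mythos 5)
Your proposal is correct and follows exactly the route the paper indicates: the paper itself states that Theorem~\ref{thm:Dv(j) bounded} is ``a by now classical estimate of DiBenedetto'' obtained by running the argument of Corollary~\ref{cor:Dw(j) bounded 2} for $v_j$, and you reproduce precisely that two-step scheme (intrinsic pointwise bound for the frozen, $x$-independent equation, followed by the Giaquinta--Giusti interpolation via Lemma~\ref{simpfun} on the range $[\tfrac14,\tfrac12]$ with reference cylinder $\tfrac12 Q_j$). You also correctly observe that the absence of the spatial dependence removes the radius restriction $r<R_1$ present for $w_j$, which is why no such restriction appears in the statement of Theorem~\ref{thm:Dv(j) bounded}.
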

We now pass to give oscillation estimates for $w_j$ and $v_j$. The next result provides a gradient oscillations estimate for  solutions to homogeneous equations with Dini-continuous coefficients.
\begin{theorem}[Continuity estimate]\label{thm:Dw(j) continuous}
Let $w_j$ be as in \eqref{CD-local}, then $Dw_j$ is continuous. Moreover, assume that
\eqn{global}
$$
\sup_{\frac12 Q_j}|Dw_j| + s \leq A \lambda
$$
holds for some $A\geq 1$. Then, for any $\delta \in (0,1)$ there exists a positive constant $\sigma_1 \equiv \sigma_1(n,p,\nu,L,A,\delta, \omega(\cdot)) \in (0,1/4)$
such that
\begin{equation}\label{Dw(j) continuous}
\osc_{\sigma_1 Q_{j}} Dw_j \leq \delta \lambda\,.
\end{equation}
\end{theorem}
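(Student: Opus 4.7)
The plan is a Campanato-type iteration on a dyadic chain of intrinsic subcylinders of $Q_j$, combining at every level a comparison with the $x$-frozen equation and the classical gradient excess decay for the frozen problem, then summing the resulting recursion with the aid of the Dini condition \rif{aspd}. Set $\tilde\lambda:=A\lambda$: by \rif{global} together with the structural assumption \rif{asp}, on $\tfrac12 Q_j$ the matrix $\partial_z a(x,t,Dw_j)$ has eigenvalues comparable to $\tilde\lambda^{p-2}$, so after the intrinsic rescaling $x\mapsto x_0+\varrho x$, $t\mapsto t_0+\tilde\lambda^{2-p}\varrho^2 t$ the equation for $w_j$ is uniformly parabolic on the rescaled unit cylinder. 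Define $\varrho_k:=\sigma_0^k r_j/2$ and $Q^{(k)}:=Q_{\varrho_k}^{\tilde\lambda}(x_0,t_0)\subset \tfrac12 Q_j$ for a small parameter $\sigma_0\in(0,1/4)$ to be fixed later.

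At each level $k$, introduce the comparison map $v^{(k)}$ solving the Cauchy-Dirichlet problem
$$
\partial_t v^{(k)}-\divo\, a(x_0,t,Dv^{(k)})=0 \quad \mbox{in}\ Q^{(k)},\qquad v^{(k)}=w_j \quad \mbox{on}\ \partial_{\rm par} Q^{(k)}.
$$
Standard energy testing of the difference $w_j-v^{(k)}$, using monotonicity from the second line of \rif{asp} and the Dini modulus from the third line, yields an $L^p$-comparison of the gradients of the form
$$
\mean{Q^{(k)}} |Dw_j-Dv^{(k)}|^p\, dx\, dt \le c\,\omega(\varrho_k)^{p}\,\tilde\lambda^p.
$$
Using the a priori $L^\infty$-gradient bounds $|Dw_j|,|Dv^{(k)}|\le c\tilde\lambda$ provided by Corollary~\ref{cor:Dw(j) bounded 2} and Theorem~\ref{thm:Dv(j) bounded}, and applying Moser/De Giorgi-type iteration to the linear uniformly-parabolic system that $Dw_j-Dv^{(k)}$ obeys after the intrinsic rescaling, this $L^p$-closeness can be upgraded to
$$
\sup_{\tfrac12 Q^{(k)}} |Dw_j-Dv^{(k)}| \le c\,\omega(\varrho_k)\,\tilde\lambda.
$$
On the other hand, for the frozen map $v^{(k)}$, DiBenedetto's gradient Hölder theory \cite{DBbook} applied on intrinsic cylinders in the non-degenerate regime set by $\tilde\lambda$ yields the excess decay
$$
\osc_{\sigma_0 Q^{(k)}} Dv^{(k)} \le c_*\,\sigma_0^{\alpha_0}\,\osc_{Q^{(k)}} Dv^{(k)},
$$
with $c_*,\alpha_0$ depending only on $n,p,\nu,L$; in particular, $Dv^{(k)}$ is continuous, which together with the last display transfers continuity to $Dw_j$.

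Setting $E_k:=\osc_{Q^{(k)}} Dw_j$, a triangle inequality and the choice of $\sigma_0$ with $c_*\sigma_0^{\alpha_0}\le 1/2$ produce the recursion $E_{k+1}\le \tfrac12 E_k + c\,\omega(\varrho_k)\,\tilde\lambda$, whose iteration gives
$$
E_K \le 2^{-K}E_0 + c\,\tilde\lambda\sum_{k=0}^{K-1}2^{k-K}\omega(\varrho_k).
$$
By \rif{aspd} the series $\sum_k\omega(\varrho_k)$ is convergent, so by first picking an index past which its tail is smaller than $\delta/(4c)$ and then $K$ large enough to absorb the remaining terms (and the initial $2^{-K}E_0\le 2^{1-K}A\lambda$) one obtains $E_K\le \delta\lambda$. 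The claim \rif{Dw(j) continuous} then follows with $\sigma_1:=\sigma_0^K/2$, which depends on $n,p,\nu,L,A,\delta,\omega(\cdot)$ as required. The main obstacle is the simultaneous coordination, across all scales, of three ingredients that must all live in the same intrinsic geometry dictated by $\tilde\lambda$: the comparison estimate, which can rely only on Dini and not Hölder continuity of $x\mapsto a(x,\cdot,\cdot)$ (so no gain of exponent is available in $\omega$); the upgrade from $L^p$ to $L^\infty$ closeness of $Dw_j-Dv^{(k)}$, which hinges on the non-degenerate regime guaranteed by \rif{global} and on the Lipschitz bounds of Corollary~\ref{cor:Dw(j) bounded 2} and Theorem~\ref{thm:Dv(j) bounded}; and DiBenedetto's gradient excess decay, which must be formulated on intrinsic cylinders adapted to $\tilde\lambda$ so that the parabolic geometries of comparison and decay match at every dyadic level and the recursion can be closed against the Dini sum.
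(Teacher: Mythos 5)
Your Campanato-style plan is morally the right shape — shrink intrinsic subcylinders, compare with the frozen problem, use the frozen excess decay, sum against the Dini series — but as written it has two coupled gaps, and the paper's own proof is precisely engineered to avoid them.

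First, the $L^\infty$-upgrade in Step 3 is unjustified. You claim that "after the intrinsic rescaling... the equation for $w_j$ is uniformly parabolic" and that $Dw_j-Dv^{(k)}$ "obeys a linear uniformly-parabolic system" to which Moser/De Giorgi can be applied. But the hypothesis \eqref{global} is only an \emph{upper} bound $\sup_{\frac12 Q_j}|Dw_j|+s\le A\lambda$; there is no lower bound on $|Dw_j|$ or on $|Dv^{(k)}|$. Where both gradients are small, the coefficient matrix $\int_0^1\partial_z a(x_0,t,Dv^{(k)}+\theta(Dw_j-Dv^{(k)}))\,d\theta$ degenerates (its eigenvalues are of order $(|Dw_j|^2+|Dv^{(k)}|^2+s^2)^{(p-2)/2}$, which can vanish when $s=0$), so there is no uniform ellipticity and no quantitative sup-bound on $Dw_j-Dv^{(k)}$ in terms of $\omega(\varrho_k)$. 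Second, even granting an $L^\infty$ upgrade, your energy comparison gives too little: testing with $w_j-v^{(k)}$ and using the second and third lines of \eqref{asp} one only obtains $\mean{Q^{(k)}}(|Dw_j|+|Dv^{(k)}|+s)^{p-2}|Dw_j-Dv^{(k)}|^2\lesssim\omega(\varrho_k)^2\tilde\lambda^p$, hence $\mean{Q^{(k)}}|Dw_j-Dv^{(k)}|^p\lesssim\omega(\varrho_k)^2\tilde\lambda^p$ (this is the content of Lemma~\ref{lemma:w-v}, not $\omega^p$). The resulting $L^p$-closeness of the gradients is of size $\omega(\varrho_k)^{2/p}\tilde\lambda$, and for $p>2$ the Dini condition on $\omega$ does \emph{not} imply summability of $\sum_k\omega(\varrho_k)^{2/p}$ — e.g. $\omega(\varrho)\sim 1/\log^2(1/\varrho)$ is Dini while $\omega^{1/2}$ is not — so your recursion $E_{k+1}\le\tfrac12 E_k+c\,\omega(\varrho_k)^{2/p}\tilde\lambda$ need not close. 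To recover a genuine $\omega(\varrho_k)$ decay one needs a lower bound $|Dw_j|\gtrsim\lambda/A$ (that is the mechanism behind Lemma~\ref{Du vs Dv(j)}), which is exactly the hypothesis you do not have here.

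The paper's proof sidesteps both problems by iterating in the nonlinear variable $V(z)=(|z|^2+s^2)^{(p-2)/4}z$ rather than $Dw_j$ itself. The comparison estimate controls $|V(Dw_j)-V(Dv_j)|$ with the natural $\omega$-decay without any lower bound on the gradients, the frozen excess decay is stable in $V$, and the Campanato iteration (carried out in \cite{KMjmpa}, quoted here as \eqref{smally}--\eqref{smally22}) runs at the $V$-level uniformly across the degenerate and non-degenerate regimes. The passage from $V$-oscillation to $Dw_j$-oscillation is done only at the very end via the elementary inequality \eqref{eleV}, together with the trivial observation that one may assume $|Dw_j|\gtrsim\delta\lambda$ at one of the two points being compared (otherwise the oscillation bound is immediate). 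If you want to carry out a self-contained iteration in the spirit of your proposal, you should set it up for $V(Dw_j)$ and $V(Dv^{(k)})$ throughout, and only then convert to $Dw_j$ at the end.
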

\begin{proof} The starting point of the proof is the work in \cite{KMjmpa}, where the continuity of the gradient of solutions to parabolic equations as in \rif{maineq} has been proved under the assumption of Dini-continuity of the space coefficients; this by the way immediately implies the continuity of $Dw_j$ claimed in the statement. What we need here is a quantitative bound on the oscillations of $Dw_j$. To this aim, let us briefly recall the main arguments in \cite[proof of Theorem 1.3]{KMjmpa}, where the continuity properties of $Du$ are formulated and proved in terms of the auxiliary vector field
$$
V(z)  := (|z|^2+s^2)^{(p-2)/4}z
$$
and the related field $V(Dw_j)$. For the use of such maps in the present context we refer to \cite{KMjmpa}; the only property we shall use here is the following inequality:
\eqn{eleV}
$$
|z_1-z_2|\leq c_v \frac{|V(z_1)-V(z_2)|}{(s^2+|z_1|^2+|z_2|^2)^{(p-2)/4}}\,,
$$
that holds for $c_v \equiv c_v(n,p)$ and for all vectors $z_1,z_2 \in \er^n$ which are not simultaneously null; see for instance \cite{sns}. By following the arguments developed for \cite[(5.15)]{KMjmpa} it can be proved that for every $\eps \in (0,1)$ there exists a positive radius $R_\eps \equiv R_\eps(n,p,\ratio,\omega(\cdot),\eps) \in (0,1/16)$ such that
\eqn{smally}
$$|(V(Dw_j))_{Q_{\tau}^{A\lambda}(\tilde x,\tilde t)}-(V(Dw_j))_{Q_{\varrho}^{A\lambda}(\tilde x,\tilde t)}|\leq (A\lambda)^{p/2} \ep$$
and
\eqn{smally22}
$$
\left(\mean{Q_{\varrho}^{A\lambda}(\tilde x,\tilde t)} |V(Dw_j)-(V(Dw_j))_{Q_{\varrho}^{A\lambda}(\tilde x,\tilde t)}|^2\, dx \, dt\right)^{1/2} \leq (A\lambda)^{p/2} \ep
$$
hold whenever $(\tilde x,\tilde t) \in \frac 14 Q_j$ and $0< \tau \leq \varrho  \leq R_\ep r_j$; notice that $R_\eps$ is in particular independent of $\lambda$, $A$ and the considered cylinder $Q_j$.
Letting $\tau \to 0$ in \rif{smally} and recalling that $V(Dw_j)$ is continuous yields
\eqn{alpunto}
$$|V(Dw_j(\tilde x,\tilde t))-(V(Dw_j))_{Q_{\varrho}^{A\lambda}(\tilde x,\tilde t)}|\leq (A\lambda)^{p/2} \ep\qquad \forall \ \varrho \in (0,R_\ep r_j]\,.$$
We are now ready to finish the proof with the choice
$$\eps:=\frac{\delta^{p/2}}{c_v 2^{p/2-1}48^N A^{p/2}}, \qquad \qquad \sigma_1:= \frac{A^{(2-p)/2}R_\eps}{32}\,.$$
The constant $c_v$ is the one appearing in \rif{eleV}
Notice that the dependence of $\sigma_1$ upon $n,p,\nu,L,A,\delta, \omega(\cdot)$, as described in the statement, appears through the one implicitly contained in $R_\eps$. Take now $(\tilde y, \tilde s),(\tilde x,\tilde t) \in \sigma_1Q_{j}$; we can assume that $\tilde t \geq \tilde s$ otherwise we can exchange the role between the two points in the next lines. It obviously follows that $$Q_{R_\eps r_j/8}^{A\lambda}(\tilde y, \tilde s) \subset Q_{R_\eps r_j}^{A\lambda}(\tilde x,\tilde t) \subset \frac 14 Q_j\,.$$ Using this last fact, thanks to Jensen's inequality, the one in display \rif{smally22} and using also \rif{mediaprop}, we have
\begin{eqnarray*}
&& |(V(Dw_j))_{Q_{R_{\eps}r_j/8}^{A\lambda}(\tilde y,\tilde s)}-(V(Dw_j))_{Q_{R_{\eps}r_j/8}^{A\lambda}(\tilde x,\tilde t)}|\\
&& \leq \mean{Q_{R_{\eps}r_j/8}^{A\lambda}(\tilde y,\tilde s)} |V(Dw_j)-(V(Dw_j))_{Q_{R_{\eps}r_j/8}^{A\lambda}(\tilde x,\tilde t)}|\, dx \, dt\\
&& \leq 2\mean{Q_{R_{\eps}r_j/8}^{A\lambda}(\tilde y,\tilde s)} |V(Dw_j)-(V(Dw_j))_{Q_{R_{\eps}r_j}^{A\lambda}(\tilde x,\tilde t)}|\, dx \, dt\\
&& \leq 16^N\mean{Q_{R_\eps r_j}^{A\lambda}(\tilde x,\tilde t)} |V(Dw_j)-(V(Dw_j))_{Q_{R_{\eps}r_j}^{A\lambda}(\tilde x,\tilde t)}|\, dx \, dt\\&&
\leq 16^N\left(\mean{Q_{R_\eps r_j}^{A\lambda}(\tilde x,\tilde t)} |V(Dw_j)-(V(Dw_j))_{Q_{R_{\eps}r_j}^{A\lambda}(\tilde x,\tilde t)}|^2\, dx \, dt\right)^{1/2}\\&& \leq 16^N(A\lambda)^{p/2} \ep\,.
\end{eqnarray*}
By using the previous estimate and \rif{alpunto} (actually used also for $(\tilde y,\tilde s)$ instead of $(\tilde x,\tilde t)$) together with triangle inequality we easily gain
\eqn{unaV}
$$
|V(Dw_j(\tilde x,\tilde t))-V(Dw_j(\tilde y,\tilde s))|\leq 48^N (A\lambda)^{p/2} \ep\,.
$$
We are now ready to show \rif{Dw(j) continuous} proving that
\eqn{oscidue}
$$
|Dw_j(\tilde x,\tilde t)-Dw_j(\tilde y,\tilde s)|\leq \delta  \lambda
$$
whenever $(\tilde y, \tilde s),(\tilde x,\tilde t) \in \sigma_1Q_{j}$. First of all, observe that we can assume that either $|Dw_j(\tilde x,\tilde t)|\geq \delta\lambda/2$ or $|Dw_j(\tilde y,\tilde s)|\geq \delta\lambda/2$ holds otherwise we are done. The inequalities in \rif{eleV} and \rif{unaV} then imply \rif{oscidue} as follows:
$$
|Dw_j(\tilde x,\tilde t)-Dw_j(\tilde y,\tilde s)|\leq c_v 48^N(A\lambda)^{p/2} (\delta\lambda/2)^{1-p/2}  \ep = \delta \lambda
$$
and the proof is complete.\end{proof}
\begin{remark}\label{dinim}The proof of the previous result allows in fact, together with the argument given in \cite{KMjmpa}, to get an explicit modulus of continuity for the gradient of solutions of equations with Dini-continuous coefficients. Indeed, the choice of the radius $R_{\eps}r_j$ making \rif{smally} is made to meet a condition of the form
\[
\omega(R_{\eps}r_j) +\int_0^{R_{\eps}r_j} \omega(\varrho) \, \frac{d\varrho}{\varrho} \leq \frac{\eps^{b}}{c}
\]
for some positive constants $b$ and $c$ depending only on $n,p,\nu,L,A$. This gives a modulus of continuity involving a power of the function
$$
r \mapsto \omega(r) +\int_0^{r} \omega(\varrho) \, \frac{d\varrho}{\varrho}
$$
which is in accordance to the known results in the classical elliptic regularity theory.
\end{remark}
We now collect a few results from \cite{KMW, KMpisa} in order to provide oscillation estimates for the functions $v_j$. The next statement is a slight variant of \cite[Theorem 3.1]{KMW}.
\begin{theorem}
\label{sublime}
Let $v_j$ be as in \eqref{CD-local v}. Consider numbers
$$A,B\geq 1 \qquad \mbox{and} \qquad \bar \ep \in (0,1)\,.$$
Then there exists a constant $\sigma_2 \in (0,1/4)$ depending only on $n,p,\nu,L, A,B, \bar \ep$ such that
if
\begin{equation}\label{maina}
\frac{\lambda}{B} \leq \sup_{\sigma_2 Q_{j}} |Dv_j| \leq s+ \sup_{\frac14 Q_j} |Dv_j| \leq A\lambda
\end{equation}
holds, then
\begin{equation}\label{decayexcess0}
\mean{\tau Q_{j}} |Dv_j-(Dv_j)_{\tau Q_{j}}| \, dx \, dt \leq \bar \eps \mean{\tfrac14  Q_{j}} |Dv_j-(Dv_j)_{\tfrac14  Q_{j}}| \, dx \, dt
\end{equation}
holds too, whenever $\tau \in (0,\sigma_2]$.
\end{theorem}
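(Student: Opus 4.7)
The plan is to obtain Theorem~\ref{sublime} by a compactness/contradiction argument, essentially reproducing the proof of \cite[Theorem 3.1]{KMW} and tracking only the modifications forced by the fact that here $a(\cdot)$ is allowed to depend on the frozen time variable $t$. First I would rescale to a standard cylinder by setting
$$
\tilde v(x,t) := \frac{v_j(x_0+r_j x,\, t_0+\lambda^{2-p}r_j^{2} t)}{\lambda r_j}\,, \qquad (x,t) \in \tfrac12 Q_1\,,
$$
which solves a homogeneous equation $\partial_t \tilde v - \textnormal{div}\,\tilde a(t,D\tilde v)=0$ in $\frac12 Q_1$, where $\tilde a(t,z) := \lambda^{1-p} a(x_0, t_0+\lambda^{2-p}r_j^2 t, \lambda z)$ still satisfies \rif{asp} with the parameter $s$ replaced by $\tilde s:=s/\lambda$; the assumption \rif{maina} becomes $1/B \leq \sup_{\sigma_2 Q_1}|D\tilde v|$ and $\tilde s + \sup_{\frac14 Q_1}|D\tilde v| \leq A$. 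This reduces the matter to a universal decay estimate for $D\tilde v$ on cylinders of size $\sigma_2$.

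Next I would argue by contradiction. Suppose no $\sigma_2 = \sigma_2(n,p,\nu,L,A,B,\bar\ep)$ works. Then for every $\sigma_k \downarrow 0$ there exists a solution $\tilde v_k$ of an equation of the above form, with coefficients $\tilde a_k$ satisfying \rif{asp} and parameters $\tilde s_k \in [0,A]$, obeying the previous normalization yet violating \rif{decayexcess0} at scale $\tau=\sigma_k$. Since $\sup_{\frac14 Q_1}|D\tilde v_k| \leq A$, the gradient continuity for homogeneous equations with measurable time coefficients, that is the coefficient-independent specialization of Theorem~\ref{thm:Dw(j) continuous} originally due to DiBenedetto, yields a uniform interior modulus of continuity for $D\tilde v_k$ on every compact subset of $\frac14 Q_1$. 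Ascoli--Arzel\`a then provides, along a subsequence, $\tilde v_k \to \tilde v_\infty$ locally uniformly together with gradients, with $\tilde v_\infty$ a weak solution of a limit equation $\partial_t \tilde v_\infty - \textnormal{div}\,\tilde a_\infty(t,D\tilde v_\infty)=0$ of the same structural type.

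The non-degeneracy comes from the lower bound: since $\sup_{\sigma_k Q_1}|D\tilde v_k|\geq 1/B$, equicontinuity transports this lower bound to the vertex, giving $|D\tilde v_\infty(0,0)|\geq 1/B$, hence $|D\tilde v_\infty|\geq 1/(2B)$ on a small cylinder $Q_\rho$. On $Q_\rho$ the equation for $\tilde v_\infty$ linearizes (after differentiation in space) into a uniformly parabolic equation in divergence form, with ellipticity constants depending only on $n,p,\nu,L,A,B$ and with coefficients merely measurable in $t$. Classical De Giorgi--Nash--Moser theory for such operators then yields a H\"older decay of the excess,
$$
\mean{\tau Q_1} |D\tilde v_\infty - (D\tilde v_\infty)_{\tau Q_1}|\, dx\, dt \leq C\tau^{\alpha} \mean{\tfrac14 Q_1} |D\tilde v_\infty - (D\tilde v_\infty)_{\frac14 Q_1}|\, dx\, dt
$$
for all small $\tau$ with $C,\alpha$ depending only on $n,p,\nu,L,A,B$. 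Choosing $\tau$ small enough makes the right-hand side strictly smaller than $\bar\ep/2$ times the excess on $\frac14 Q_1$, and passing this inequality back to $\tilde v_k$ using the uniform gradient convergence contradicts the failure of \rif{decayexcess0} for large $k$. The resulting $\tau$ is the desired $\sigma_2$.

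The main obstacle is checking that the decay step survives the presence of merely measurable time coefficients and the intrinsic scaling: one must verify that, around $D\tilde v_\infty(0,0)\neq 0$, the linearized operator is uniformly parabolic with constants depending only on $n,p,\nu,L,A,B$, and that both the compactness and the excess decay depend on these parameters alone. The $x$-independence of $\tilde a$ (frozen at $x_0$) is essential here: it allows differentiation of the equation in space and rules out the Dini term from entering the estimates, so that passage to the limit in $\tilde a_k$ produces an admissible limit equation without any loss of structural control.
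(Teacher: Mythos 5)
The paper itself gives no self-contained proof of Theorem~\ref{sublime}: it is stated as a ``slight variant of [Theorem 3.1]{KMW}'' and the subsequent remark notes that the arguments of \cite{KMW, KMpisa}, written there for coefficient-free equations $u_t-\divo\,a(Du)=0$, carry over verbatim to the time-dependent frozen equation $u_t-\divo\,a(x_0,t,Du)=0$. That proof is a \emph{direct} one in the DiBenedetto style: the lower bound on the supremum in \rif{maina}, combined with the known interior gradient H\"older continuity, yields a pointwise lower bound for $|Dv_j|$ on a smaller intrinsic cylinder whose size is quantified by $A,B$ and the structure constants; one then differentiates the equation in space and applies De~Giorgi--Nash--Moser directly to the resulting uniformly parabolic system for $Dv_j$, with the ellipticity ratio controlled by $A,B,\nu,L$, to get the quantitative excess decay. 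Your compactness/contradiction scheme reproduces the same linearization insight, but the framing introduces two genuine gaps.

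First, you cannot pass to a limit equation. The rescaled vector fields $\tilde a_k(t,z)$ are only assumed measurable in $t$, with no modulus of continuity and no compactness in any topology that would make $\tilde a_k(t,D\tilde v_k)\to \tilde a_\infty(t,D\tilde v_\infty)$ in $\mathcal D'$. Uniform $C^1$ convergence of $D\tilde v_k$ does not help: the nonlinear composition with a merely measurable family of integrands does not converge, and there is no candidate for $\tilde a_\infty$. This issue is precisely why the paper insists the De~Giorgi--Nash--Moser estimates be applied to $Dv_j$ \emph{directly}, using only structural bounds that are uniform in $t$; there is no need for (and no way to justify) a limit equation.

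Second, and more decisively, the contradiction does not close when the limit gradient is constant. Negating the theorem gives, for each $k$, a $\tau_k\in(0,\sigma_k]$ and a $\tilde v_k$ with
$$
E(D\tilde v_k,\tau_k Q_1) > \bar\ep\, E(D\tilde v_k,\tfrac14 Q_1)\,.
$$
Your argument establishes, for the (hypothetical) limit, $E(D\tilde v_\infty,\tau Q_1)\leq \tfrac{\bar\ep}{2}E(D\tilde v_\infty,\tfrac14 Q_1)$ for a fixed small $\tau$. But if $D\tilde v_\infty$ is a nonzero constant vector with $|D\tilde v_\infty|\geq 1/B$ -- perfectly compatible with every hypothesis you imposed on the limit -- then $E(D\tilde v_\infty,\tfrac14 Q_1)=0$ and both sides vanish. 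Meanwhile the failure inequality for $\tilde v_k$ is a comparison of two quantities that may both tend to zero at rates you do not control, so no contradiction is reached. To rescue a blow-up argument one would have to normalize by the excess, i.e. consider $\tilde w_k := (\tilde v_k - \ell_k)/E(D\tilde v_k,\tfrac14 Q_1)$ for suitable affine $\ell_k$ and analyze the degenerating limit equation -- a substantially more delicate scheme than the one you wrote, and not the one the paper invokes. The direct route avoids both problems: the H\"older modulus of $Dv_j$ (Theorem~\ref{thm:Dw(j) continuous} specialized to $x$-independent coefficients, or equivalently Theorem~\ref{sublimeII}) and \rif{maina} force $|Dv_j|\gtrsim\lambda/B$ on $\rho Q_j$ for a quantified $\rho=\rho(n,p,\nu,L,A,B)$; on that cylinder the differentiated equation is uniformly parabolic with measurable time coefficients, and De~Giorgi--Nash--Moser yields $E(Dv_j,\tau Q_j)\leq C(\tau/\rho)^\alpha E(Dv_j,\tfrac14 Q_j)$, from which \rif{decayexcess0} follows by choosing $\sigma_2$ small.
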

\begin{remark} The essence of the previous result lies in the fact that once the bounds \rif{maina} are satisfied, then solutions to evolutionary $p$-Laplacean type equations satisfy elliptic type decay estimates as in \rif{decayexcess} when framed in the proper intrinsic geometry dictated by \rif{maina}. Indeed, let us denote by $E(f,Q)$ the usual excess functional
\eqn{excess}
$$
E(f,Q):= \mean{Q} |f-(f)_{Q}| \, dx \, dt
$$
which is defined whenever $f$ is an integrable function and $Q$ a measurable set with positive measure; this functional gives an integral measure of the oscillations of $f$ in a subset $Q$. Estimate \rif{decayexcess0} now reads as
\eqn{decayexcess}
$$
E(Dv_j,\tau Q_{j}) \leq  \bar\ep E(Dv_j,\tfrac14 Q_j)\,.$$
Theorem \ref{sublime} gives the natural analog, when passing to the framework of degenerate parabolic equations of $p$-Laplacean type, of the classical results known for solutions to the heat equations. Indeed, Theorem \ref{sublime} holds without assuming \rif{maina} for solutions to \rif{heateq}. This is a classical result of Campanato \cite{C}.
\end{remark}
Using Theorem \ref{sublime} it is possible to give a proof of the H\"older continuity of the gradient of solutions to frozen equations, as for instance shown in \cite[Theorem 3.2]{KMpisa}; see also \cite[Theorem 3.2]{KMW}.
\begin{theorem}
\label{sublimeII}
Let $v_j$ be as in~\eqref{CD-local v}. For every $A\geq 1$ there exist constants $c_4 \equiv c_4(n,p,\nu,L,A)$ and $\alpha \equiv \alpha(n,p,\nu,L,A)$ such that
%\begin{equation}\label{Dv(j) holder}
$$
    \sup_{\frac14 Q_j} |Dv_j| + s \leq A \lambda \qquad \Longrightarrow \qquad \osc_{\tau Q_j} Dv_j  \leq c_4 \tau^\alpha \lambda \qquad \forall \, \tau \in (0,1/4)\,.
    $$
%\end{equation}
\end{theorem}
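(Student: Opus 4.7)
The statement is a standard H\"older-continuity-from-excess-decay argument once Theorem \ref{sublime} is in hand. The plan is to iterate \eqref{decayexcess0} along a sequence of intrinsic cylinders and to handle the possible degeneration of the gradient by an exit-time argument. I will work with any fixed vertex $(\bar x,\bar t)$ inside a suitable sub-cylinder of $\frac14 Q_j$ (say $\frac18 Q_j$) so that the cylinders produced by the iteration still lie in $\frac12 Q_j$, where $v_j$ solves the frozen equation in \eqref{CD-local v}; this is important since Theorem \ref{sublime} will be applied with the role of $Q_j$ played by these intrinsic cylinders, and it relies only on $v_j$ solving $\partial_t v_j-\operatorname{div}a(x_0,t,Dv_j)=0$, which is the same equation on every sub-cylinder.

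First I would fix constants $B\geq 1$ large and $\bar\eps\in(0,1)$ small to be chosen in terms of $n,p,\nu,L,A$; set $\sigma=\sigma_2(n,p,\nu,L,A,B,\bar\eps)\in(0,1/4)$ as in Theorem \ref{sublime}, and consider the nested sequence of intrinsic cylinders $\tilde Q_k:=Q^{\lambda}_{\sigma^{k}r_j/8}(\bar x,\bar t)$, $k\geq 0$. Introduce the exit index $k^{\ast}:=\inf\{k\geq 0:\sup_{\sigma\tilde Q_{k}}|Dv_j|<\lambda/B\}$ (possibly $+\infty$). For $0\leq k<k^{\ast}$, the upper bound in \eqref{maina} is inherited from the hypothesis by monotonicity ($\tilde Q_k\subset\frac14 Q_j$), and the lower bound in \eqref{maina} holds by definition of $k^{\ast}$. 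Applying Theorem \ref{sublime} with $\tilde Q_k$ in place of $Q_j$ yields
\[
E(Dv_j,\tilde Q_{k+1})\leq \bar\eps\, E(Dv_j,\tilde Q_k),
\]
and iterating gives $E(Dv_j,\tilde Q_k)\leq c(n,p)\,\bar\eps^{\,k} A\lambda$ for every $k<k^{\ast}$, after using Theorem \ref{thm:Dv(j) bounded} to start the iteration with $E(Dv_j,\tilde Q_0)\lesssim A\lambda$. For $k\geq k^{\ast}$, monotonicity of the supremum over the nested cylinders yields $\sup_{\tilde Q_k}|Dv_j|\leq \lambda/B$, hence $E(Dv_j,\tilde Q_k)\leq 2\lambda/B$.

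Next, choose $\alpha\equiv \alpha(n,p,\nu,L,A)\in(0,1)$ and $\bar\eps$ so that $\bar\eps=\sigma^{\alpha}$, and choose $B$ so that $2/B\leq \sigma^{k^{\ast}\alpha}$ at the exit time (this is arranged by a standard comparison since at the exit step one loses exactly one factor $\bar\eps=\sigma^{\alpha}$); combining the two regimes gives
\[
E(Dv_j,\tilde Q_k)\leq c(n,p,\nu,L,A)\,\sigma^{k\alpha}\lambda
\]
for every $k\geq 0$ and uniformly in $(\bar x,\bar t)$. Using \eqref{mediaprop} in the form $|(Dv_j)_{\tilde Q_{k+1}}-(Dv_j)_{\tilde Q_k}|\leq c\,\sigma^{-(n+2)}E(Dv_j,\tilde Q_k)$, the sequence $\{(Dv_j)_{\tilde Q_k}\}$ is Cauchy with limit equal to $Dv_j(\bar x,\bar t)$ (which is the Lebesgue value, $Dv_j$ being continuous in the interior by Theorem \ref{thm:Dw(j) continuous} applied to the frozen equation). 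Summing the geometric series one obtains the pointwise Campanato-type bound
\[
|Dv_j(\bar x,\bar t)-(Dv_j)_{\tilde Q_k}|\leq c\,\sigma^{k\alpha}\lambda\qquad \forall\, k\geq 0,
\]
which, by the usual interpolation between two points with respect to the intrinsic parabolic distance, yields that $Dv_j$ is $\alpha$-H\"older continuous on $\frac18 Q_j$ with seminorm $\leq c\lambda/r_j^{\alpha}$ (in the intrinsic metric $d_\lambda((x,t),(y,s))=\max\{|x-y|,\lambda^{(p-2)/2}|t-s|^{1/2}\}$). Finally, for two points in $\tau Q_j$ with $\tau\leq 1/4$ their intrinsic distance is at most $2\tau r_j$, and the oscillation estimate $\osc_{\tau Q_j}Dv_j\leq c_4 \tau^{\alpha}\lambda$ follows.

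The step I expect to require the most care is the bookkeeping at the exit time: the constant $B$ and the exponent $\alpha$ must be chosen compatibly so that the two regimes (before and after $k^{\ast}$) match up to the same power $\sigma^{k\alpha}$, and so that the Campanato-type argument produces a seminorm with the correct $\lambda/r_j^{\alpha}$ scaling that makes the final bound dimensionally homogeneous in $\tau$. Everything else (the application of Theorem \ref{sublime} with $\tilde Q_k$ in place of $Q_j$, and the passage from excess decay to H\"older continuity via \eqref{mediaprop}) is routine once it is observed that $v_j$ solves the same frozen equation on every sub-cylinder of $\frac12 Q_j$.
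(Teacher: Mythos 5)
The paper does not reproduce a proof of Theorem~\ref{sublimeII} at all: it simply refers to \cite[Theorem~3.2]{KMpisa} and \cite[Theorem~3.2]{KMW}, whose proofs carry out an exit-time iteration as you describe, but with a crucial extra ingredient that you have not accounted for. Your setup --- recentering at an arbitrary $(\bar x,\bar t)\in\tfrac18Q_j$, forming nested $\lambda$-intrinsic cylinders $\tilde Q_k$, and iterating the excess decay of Theorem~\ref{sublime} while the non-degeneracy threshold $\lambda/B\leq\sup_{\sigma\tilde Q_k}|Dv_j|$ persists --- is the right one. The gap is in what happens after the exit time $k^\ast$.

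Once $\sup_{\tilde Q_{k^\ast+1}}|Dv_j|<\lambda/B$, the lower bound in \eqref{maina} fails on all $\tilde Q_k$ with $k>k^\ast$, so Theorem~\ref{sublime} can no longer be applied at the $\lambda$-intrinsic scale and the iteration \emph{stops}. All you retain is the static bound $E(Dv_j,\tilde Q_k)\leq 2\lambda/B$ for $k\geq k^\ast+1$, which does \emph{not} decay in $k$. Since the subsequent Campanato step requires $\sum_{k} E(Dv_j,\tilde Q_k)$ (or at least a geometric decay of $E(Dv_j,\tilde Q_k)$), a constant tail of size $2\lambda/B$ is fatal: the averages $(Dv_j)_{\tilde Q_k}$ need not be Cauchy, and in any case you cannot conclude $E(Dv_j,\tilde Q_k)\leq c\,\sigma^{k\alpha}\lambda$ for every $k$. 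Your proposed fix --- ``choose $B$ so that $2/B\leq\sigma^{k^\ast\alpha}$'' --- is not available: $B$ is a fixed structural constant, while $k^\ast$ depends on the solution and can be arbitrarily large (or infinite), so the inequality $2/B\leq\sigma^{k^\ast\alpha}$ fails for large $k^\ast$. What is actually needed after the exit time is the \emph{switching} argument that is the whole point of the cited references: once $\sup_{\tilde Q_{k^\ast+1}}|Dv_j|+s\lesssim\lambda/B=:\lambda'$, one restarts the iteration in the \emph{$\lambda'$-intrinsic geometry}, using cylinders $Q_\rho^{\lambda'}(\bar x,\bar t)$. Since $p\geq 2$ and $\lambda'<\lambda$, a $\lambda'$-cylinder is taller than a $\lambda$-cylinder of the same radius, so the starting radius must be contracted by a factor of order $(\lambda'/\lambda)^{(p-2)/2}$ to fit inside $\tilde Q_{k^\ast+1}$; the resulting discrete ``jump'' in radius, traded against the drop of $\lambda$ to $\lambda'$, determines the final H\"older exponent $\alpha=\alpha(n,p,\nu,L,A)$. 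This restarting may occur along a potentially infinite sequence of exit times, each time lowering the intrinsic level, and it is the interplay of these two geometric factors that makes the exponent $\alpha$ come out independent of $v_j$ (and also why the bound reads $\osc_{\tau Q_j}Dv_j\leq c_4\tau^\alpha\lambda$ with $\tau Q_j$ a $\lambda$-cylinder: $\lambda'$-cylinders contain $\lambda$-cylinders of the same radius, so the oscillation bound transfers). Without this switching step your excess decay does not extend past $k^\ast$, and the Campanato argument, and hence the theorem, does not follow.
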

\begin{remark} Theorems \ref{sublime}-\ref{sublimeII} have been presented in \cite{KMW, KMpisa} actually for solutions to equations of the type
\eqn{senzac}
$$u_t- \divo\, a(Du)=0\,.$$ On the other hand, by following the arguments in \cite{KMW, KMpisa} it is not difficult to see that all the proofs carry out for solutions to equations of the type in display \rif{senzat}. Therefore Theorems \ref{sublime}-\ref{sublimeII} apply to the functions $v_j$ as well, that indeed solve equations as in \rif{senzat}.
\end{remark}
\subsection{Comparison estimates}\label{compi2}
We start this section by a reformulation of a result established in~\cite[Lemma 4.1]{KMW} and \cite[(4.5), (4.6)]{KMW}. We remark that the result there was presented only for equations without coefficients as in \rif{senzac}. %%%ROS omitted the next and for $\mu \in L^1$. 
Nevertheless, the proof works directly for general equations with merely measurable coefficients%%%ROS omitted the nextand genuinely measure data
; the crucial point is the strict monotonicity in the gradient variable.
\begin{lemma}\label{lemma:u-w comparison}
Let $u$  be as in Theorem \ref{main1} and $w_j$ as in \trif{CD-local} with $j \geq 0$.  Let $\tilde \eps \in (0,1/(n+1)]$.
There exist constants $\bar c_1\equiv \bar c_1(n,p,\nu,\tilde \eps)$ and $\bar c_2 \equiv \bar c_2(n,p,\nu)$ such that
\begin{equation}\label{ce}
\left(\mean{Q_j} |Du - D w_j|^q \, dx \, dt\right)^{1/q}
\leq \bar c_1 \lambda \left[\frac1\lambda \frac{|\mu|(Q_j)}{r_j^{N-1}} \right]^{(n+2)/[(p-1)n+p]}
\end{equation}
holds for any $0<q\leq p-1+1/(n+1)-\tilde \eps$. Moreover, the inequalities
\begin{equation}\label{eq:comparison prel 1}
\sup_{t \in T_j} \int_{B_j} |u-w_j| \, dx \leq |\mu|(Q_j)
\end{equation}
and
\begin{equation}\label{eq:comparison prel 2}
\mean{Q_j} \frac{(|Du|+|Dw_j|)^{p-2}|Du-Dw_j|^2}{(\alpha + |u-w_j|)^{\xi}}\,dx \, dt
\leq \bar  c_2 \frac{\alpha^{1-\xi}}{\xi-1} \left[\frac{|\mu|(Q_j)}{\lambda^{2-p}r_j^{N}}\right]
\end{equation}
hold for any $\alpha>0$ and $\xi>1$.
\end{lemma}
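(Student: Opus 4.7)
The three bounds together form the standard Boccardo--Gallou\"et/Mingione package for comparing a measure-data solution with its homogeneous replacement, and the plan is to derive them exactly as in \cite{KMW} with the only novelty being that the strict monotonicity assumption in \rif{asp}$_2$ is used in place of the coefficient-free structure considered there. The starting point is, in all three cases, the difference equation
\[
\partial_{t}(u-w_{j}) - \divo\bigl[a(x,t,Du)-a(x,t,Dw_{j})\bigr]=\mu \qquad \text{in } Q_{j},
\]
with $u-w_{j}=0$ on $\partial_{\rm par}Q_{j}$, tested against suitable (approximations of) Lipschitz functions of $u-w_{j}$; the monotonicity in \rif{asp}$_{2}$ provides the positivity needed on the elliptic term, independently of any regularity of the partial map $x\mapsto a(x,\cdot)$.

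For \rif{eq:comparison prel 1}, I would test with $\mathrm{sgn}_\eta(u-w_j)\chi_{(t_0-\lambda^{2-p}r_j^2,\tau)}$, a Lipschitz truncation of the sign function integrated in time up to an arbitrary $\tau\in T_j$, and let $\eta\to 0$. The monotonicity term is nonnegative and is discarded, and one is left with the time-derivative contribution, which equals $\int_{B_j}|u-w_j|(\cdot,\tau)\,dx$ in the limit, bounded by $|\mu|(Q_j)$. Taking the sup over $\tau$ yields \rif{eq:comparison prel 1}. For \rif{eq:comparison prel 2}, I would test with $G_\xi(u-w_j):=\int_0^{u-w_j}(\alpha+|\sigma|)^{-\xi}\,d\sigma$, which is bounded by $\alpha^{1-\xi}/(\xi-1)$. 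The time term provides a nonnegative contribution coming from the primitive $\int_0^{u-w_j}G_\xi$, the elliptic term is estimated from below using the standard monotonicity inequality $\langle a(x,t,z_1)-a(x,t,z_2),z_1-z_2\rangle\geq c\,\nu(|z_1|+|z_2|+s)^{p-2}|z_1-z_2|^2$, and the right hand side is bounded by $\|G_\xi\|_\infty|\mu|(Q_j)$; dividing by $|Q_j|\lambda^{2-p}/\lambda^{2-p}$ (i.e.~writing the average) yields \rif{eq:comparison prel 2}.

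The estimate \rif{ce} is the delicate one, and I expect this to be the main technical obstacle. The scheme is to write
\[
|Du-Dw_j|^q=\left[\frac{(|Du|+|Dw_j|)^{p-2}|Du-Dw_j|^2}{(\alpha+|u-w_j|)^{\xi}}\right]^{q/2}(\alpha+|u-w_j|)^{\xi q/2}(|Du|+|Dw_j|)^{(2-p)q/2},
\]
apply H\"older's inequality with three exponents so that the first factor is controlled by \rif{eq:comparison prel 2}, the second by a parabolic Sobolev--Gagliardo--Nirenberg embedding for $u-w_j$ fed by the $L^\infty_tL^1_x$ bound of \rif{eq:comparison prel 1}, and the third by the a priori energy estimate for $Du$ and $Dw_j$ on $Q_j$. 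After choosing $\xi>1$ close to $1$ (depending on $\tilde\eps$) and optimizing $\alpha>0$ -- the optimal choice being proportional to $[|\mu|(Q_j)/r_j^{N-1}]^{\theta}$ for a suitable $\theta$ -- the three pieces combine to yield the exponent $(n+2)/[(p-1)n+p]$ on the right-hand side of \rif{ce}, which is exactly the Boccardo--Gallou\"et threshold translated into the intrinsic geometry via the factor $\lambda^{2-p}$ appearing in $|Q_j|$. The main source of difficulty is the bookkeeping of exponents and the correct tracking of the $\lambda^{2-p}$ factors so that \rif{ce} is truly dimensionally consistent with the intrinsic cylinders $Q_j=Q_{r_j}^\lambda(x_0,t_0)$; once this is done, the restriction $q\leq p-1+1/(n+1)-\tilde\eps$ emerges naturally from the admissible range of $\xi$ and the Sobolev exponent.
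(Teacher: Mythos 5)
Your treatment of \rif{eq:comparison prel 1} and \rif{eq:comparison prel 2} is sound and matches the Boccardo--Gallou\"et testing scheme that the paper invokes by reference to \cite[Lemma 4.1, (4.5)--(4.6)]{KMW} (the paper does not reproduce the proof; it only stresses that strict monotonicity in $z$ is what survives merely measurable $(x,t)$-dependence, and your choice of test functions exploits exactly this). The scaling $|Q_j|\approx \lambda^{2-p}r_j^N$ correctly produces the intrinsic normalization in \rif{eq:comparison prel 2}.

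The scheme you sketch for \rif{ce}, however, contains a genuine gap. The three--factor decomposition
$$
|Du-Dw_j|^q=\left[\frac{(|Du|+|Dw_j|)^{p-2}|Du-Dw_j|^2}{(\alpha+|u-w_j|)^{\xi}}\right]^{q/2}(\alpha+|u-w_j|)^{\xi q/2}(|Du|+|Dw_j|)^{(2-p)q/2}
$$
has $(2-p)q/2\leq 0$ because here $p\geq 2$, so the third factor is a \emph{negative} power of $|Du|+|Dw_j|$. No ``a priori energy estimate'' controls $\int(|Du|+|Dw_j|)^{-\gamma}$ for $\gamma>0$; such an integral is not finite without a lower bound on the gradient, and a three--way H\"older inequality separates the factors and would require precisely this. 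The split you wrote is the one that works in the subquadratic case $p<2$, where $(2-p)q/2>0$ and the third factor can be bounded via the energy, and it cannot be carried over. For $p\geq 2$ the correct move is the elementary triangle estimate $|Du|+|Dw_j|\geq |Du-Dw_j|$, which gives $(|Du|+|Dw_j|)^{p-2}|Du-Dw_j|^2\geq |Du-Dw_j|^p$ and turns \rif{eq:comparison prel 2} into a bound on $\mean{Q_j}|Du-Dw_j|^p/(\alpha+|u-w_j|)^\xi\,dx\,dt$. A \emph{two}--factor H\"older split,
$$
\mean{Q_j}|Du-Dw_j|^q\,dx\,dt\leq\left(\mean{Q_j}\frac{|Du-Dw_j|^p}{(\alpha+|u-w_j|)^{\xi}}\,dx\,dt\right)^{q/p}\left(\mean{Q_j}(\alpha+|u-w_j|)^{\xi q/(p-q)}\,dx\,dt\right)^{(p-q)/p},
$$
then closes, with the second factor controlled by the parabolic Sobolev--Gagliardo--Nirenberg embedding of Proposition~\ref{ppoinc} (with $m=1$, fed by \rif{eq:comparison prel 1}) and the optimization in $\alpha,\xi$ producing the exponent $(n+2)/[(p-1)n+p]$ and the constraint $q<p-1+1/(n+1)$. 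In short, what you identified as mere ``bookkeeping of exponents'' is in fact a wrong decomposition; replacing the three--factor split by the two--factor one via $|Du|+|Dw_j|\geq|Du-Dw_j|$ is the key missing step.
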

We here recall a parabolic Sobolev-Poincar\'e inequality that will be useful in the sequel; we refer to \cite[Chapter 1, Proposition 3.1]{DBbook} for the proof.
\begin{prop}\label{ppoinc} Let $v \in L^{\infty}(T_j ; L^m(B_j))\cap L^{q_2}(T_j ; W^{1,q_2}_0(B_j))$ for $q_2, m \geq 1$. There exists a constant $c$ depending only on $n,q_2, m$ such that the following inequality holds for $q_1= q_2(n+m)/n$:
$$ \int_{Q_j} |v|^{q_1} \, dx \, dt
\leq c \left(\int_{Q_j} |Dv|^{q_2} \, dx \, dt\right)
\left(\sup_{\tau} \int_{B_j} |v (x,\tau)|^m \, dx \right)^{q_2/n}\,.
$$
\end{prop}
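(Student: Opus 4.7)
The proof is by slicewise interpolation followed by integration in time. The key observation is that the target exponent decomposes as $q_1 = q_2 + q_2 m/n$, which suggests splitting $|v|^{q_1} = |v|^{q_2} \cdot |v|^{q_2 m/n}$ and applying H\"older's inequality at each fixed time slice with the two factors placed in the Sobolev-critical and the $L^m$ scales, respectively.

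First I would treat the subcritical case $q_2 < n$. For a.e.\ $t \in T_j$, the function $v(\cdot, t)$ lies in $W^{1,q_2}_0(B_j)$, so by H\"older's inequality with conjugate exponents $n/(n-q_2)$ and $n/q_2$ I obtain
\[
\int_{B_j} |v(x,t)|^{q_1}\, dx \leq \left( \int_{B_j} |v(x,t)|^{q_2^{*}}\, dx \right)^{(n-q_2)/n} \left( \int_{B_j} |v(x,t)|^{m}\, dx \right)^{q_2/n},
\]
where $q_2^{*} = n q_2/(n-q_2)$. The classical Sobolev embedding applied to the first factor gives $\|v(\cdot,t)\|_{L^{q_2^*}(B_j)} \leq c(n,q_2)\|Dv(\cdot,t)\|_{L^{q_2}(B_j)}$, whence
\[
\int_{B_j} |v(x,t)|^{q_1}\, dx \leq c \left(\int_{B_j} |Dv(x,t)|^{q_2}\, dx\right) \left(\int_{B_j} |v(x,t)|^{m}\, dx\right)^{q_2/n}.
\]
Then I would bound the $L^m$-factor by its supremum in $t$ (which is finite by assumption) and integrate the remaining gradient term over $t \in T_j$ to conclude the asserted inequality.

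The borderline and supercritical cases $q_2 \geq n$ can be reduced to the previous one by replacing $q_2^{*}$ with any sufficiently large finite exponent in the slicewise Sobolev embedding (if $q_2 = n$) or by pairing the pointwise bound $\|v(\cdot,t)\|_{L^\infty} \leq c\|Dv(\cdot,t)\|_{L^{q_2}}$ with H\"older's inequality on $|v|^{q_1} = |v|^{q_1-m}|v|^m$ (if $q_2 > n$); in both situations the same scaling between the $L^m$ mass and the $L^{q_2}$ Dirichlet energy recovers the claimed exponent $q_1 = q_2(n+m)/n$. The only subtle point is book-keeping the exponents so that the H\"older product balances correctly and the power of $\|Dv\|_{L^{q_2}}$ ends up being exactly one; this is the main place where one has to be careful, but it is purely arithmetic, and the boundary condition $v \in W^{1,q_2}_0(B_j)$ is what licenses the use of Sobolev without lower-order correction terms.
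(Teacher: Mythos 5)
Your subcritical argument ($q_2<n$) is correct, and it is essentially the standard proof of this inequality: slicewise H\"older with the split $|v|^{q_1}=|v|^{q_2}\,|v|^{mq_2/n}$ and exponents $n/(n-q_2)$, $n/q_2$, then the Sobolev inequality for $v(\cdot,t)\in W^{1,q_2}_0(B_j)$, and finally taking the (essential) supremum of the $L^m$-factor and integrating in time. The paper itself gives no proof but refers to \cite{DBbook}, Chapter 1, Proposition 3.1, whose proof is exactly this slicewise interpolation, carried out via the Gagliardo--Nirenberg inequality so that all values of $q_2$ are covered at once.

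The reductions you sketch for $q_2\geq n$, however, do not work, and this range cannot be discarded: in the proof of Lemma \ref{cor:comparison} the proposition is used with $q_2=p-1+\tfrac{1}{n+1}-\tfrac{n\tilde\varepsilon}{n+1}$, which exceeds $n$ as soon as $p$ is large. The obstruction is precisely the arithmetic you postpone. Since in time one only controls $\int_{T_j}\|Dv(\cdot,t)\|_{L^{q_2}(B_j)}^{q_2}\,dt$, the slicewise estimate must produce the gradient norm to the power \emph{exactly} $q_2$, and (because the asserted inequality is invariant under spatial scaling) no leftover power of $r_j$ is allowed. If $q_2>n$ and you use $\|v(\cdot,t)\|_{L^\infty(B_j)}\leq c\,r_j^{1-n/q_2}\|Dv(\cdot,t)\|_{L^{q_2}(B_j)}$ on the factor $|v|^{q_1-m}$, the gradient appears to the power $q_1-m=q_2+m(q_2-n)/n>q_2$, which cannot be integrated in time, and a positive power of $r_j$ survives; similarly, for $q_2=n$, replacing $q_2^{*}$ by a finite exponent $s$ forces (once the factor $\bigl(\int_{B_j}|v|^m\,dx\bigr)^{1/p'}$ is produced) the gradient power $s(q_1-m)/(s-m)>n$, with the same two defects. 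The correct unified argument is to apply, at a.e.\ time slice, the Gagliardo--Nirenberg interpolation inequality to the zero extension of $v(\cdot,t)$ to $\er^n$, namely $\|v(\cdot,t)\|_{L^{q_1}}\leq c\,\|Dv(\cdot,t)\|_{L^{q_2}}^{\theta}\|v(\cdot,t)\|_{L^{m}}^{1-\theta}$ with $\theta=q_2/q_1$: the admissibility relation $1/q_1=\theta(1/q_2-1/n)+(1-\theta)/m$ is equivalent to $q_1=q_2(n+m)/n$, the inequality is valid for every $q_2\geq 1$ because $\theta<1$, and raising it to the power $q_1$ yields the gradient to the power exactly $q_2$ and the $L^m$-integral to the power exactly $q_2/n$, after which your sup-and-integrate step concludes; for $q_2<n$ this inequality is exactly what your H\"older--Sobolev computation proves.
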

Using the previous result and Lemma \ref{lemma:u-w comparison} we get another comparison estimate.
\begin{lemma} \label{cor:comparison}
Let $u$  be as in Theorem \ref{main1} and $w_{j-1},w_j$ as in \trif{CD-local}, with $j \geq 1$.
Then, for any $\tilde \eps \in (0,1/(n+1)]$, there exists a constant $\bar c_3 \equiv \bar c_3(n,p,\nu,\tilde \eps,\sigma)$ such that the inequality
\begin{equation}\label{ce1}
\left(\mean{Q_j} |u-w_j|^{q} \, dx \, dt\right)^{1/q} \leq
\bar c_3  r_{j-1} \lambda \left[ \frac1\lambda \frac{|\mu|(Q_{j-1})}{r_{j-1}^{N-1}} \right]^{(n+p)/[(p-1)n+p]}
\end{equation}
holds whenever $0<q\leq p-1+p/n-\tilde \eps$ and
\begin{equation}\label{ce2}
\left( \mean{Q_j} |Dw_{j-1}-Dw_j|^{q} \, dx \, dt \right)^{1/q} \leq \bar c_3 \lambda \left[ \frac1\lambda \frac{|\mu|(Q_{j-1})}{r_{j-1}^{N-1}} \right]^{(n+2)/[(p-1)n+p]}
\end{equation}
holds whenever $0<q\leq p-1+1/(n+1)-\tilde \eps$.
\end{lemma}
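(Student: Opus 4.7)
The approach is to reduce both inequalities to Lemma~\ref{lemma:u-w comparison} via the parabolic Sobolev--Poincar\'e inequality (Proposition~\ref{ppoinc}), together with the elementary geometric facts $Q_j \subset Q_{j-1}$, $r_j=\sigma r_{j-1}$, and $|\mu|(Q_j)\leq|\mu|(Q_{j-1})$. For \trif{ce1} I would apply Proposition~\ref{ppoinc} with $m=1$ to $v:=u-w_j$, which by construction belongs to $L^p(T_j;W^{1,p}_0(B_j))$ and has vanishing initial trace, so for every $q_2\geq 1$ and $q_1:=q_2(n+1)/n$ one obtains
\[
\mean{Q_j}|u-w_j|^{q_1}\,dx\,dt \;\leq\; c\,\Bigl(\mean{Q_j}|Du-Dw_j|^{q_2}\,dx\,dt\Bigr)\Bigl(\sup_{t\in T_j}\|(u-w_j)(\cdot,t)\|_{L^1(B_j)}\Bigr)^{q_2/n}.
\]
The gradient factor is controlled by \trif{ce} on $Q_j$ with exponent $q_2$, and the supremum factor by \trif{eq:comparison prel 1}, jointly yielding
\[
\mean{Q_j}|u-w_j|^{q_1}\,dx\,dt \;\leq\; c\,\lambda^{q_2}\Bigl[\frac{|\mu|(Q_j)}{\lambda r_j^{N-1}}\Bigr]^{q_2\theta}|\mu|(Q_j)^{q_2/n},\qquad \theta:=\frac{n+2}{(p-1)n+p}.
\]

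The next step is a purely algebraic identity matching the Sobolev exponent with the form demanded by \trif{ce1}. Writing $|\mu|(Q_j)^{q_2/n}=\lambda^{q_2/n}r_j^{q_2(N-1)/n}[|\mu|(Q_j)/(\lambda r_j^{N-1})]^{q_2/n}$ and recalling $N-1=n+1$, one checks
\[
\tau\;:=\;\frac{n+p}{(p-1)n+p}\;=\;\frac{1+n\theta}{n+1},\qquad q_2\bigl(\theta+\tfrac1n\bigr)=q_1\tau,\qquad q_2(1-\theta)=q_1(1-\tau),
\]
so that the preceding display collapses to
\[
\mean{Q_j}|u-w_j|^{q_1}\,dx\,dt \;\leq\; c\,(\lambda r_j)^{q_1}\Bigl[\frac{|\mu|(Q_j)}{\lambda r_j^{N-1}}\Bigr]^{q_1\tau}.
\]
Extracting the $q_1$-th root and passing from $Q_j$ to $Q_{j-1}$ produces only $\sigma$-dependent constants and yields \trif{ce1}. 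The admissible range in \trif{ce1} is precisely the Sobolev image of the range in \trif{ce}: the constraint $q_2\leq p-1+1/(n+1)-\tilde\eps$ becomes $q_1\leq p-1+p/n-\tilde\eps'$ after a harmless relabeling of $\tilde\eps$.

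For \trif{ce2} the triangle inequality $|Dw_{j-1}-Dw_j|\leq|Du-Dw_{j-1}|+|Du-Dw_j|$, combined with the subadditivity of the $L^q$-average (up to a factor $C_q$ when $q<1$), reduces the estimate to two applications of \trif{ce}: directly on $Q_j$ for the second piece, and on $Q_{j-1}$ after enlarging the averaging set from $Q_j$ to $Q_{j-1}$ at the cost of $|Q_{j-1}|/|Q_j|=\sigma^{-N}$ for the first piece. Monotonicity of $|\mu|$ and $r_j=\sigma r_{j-1}$ let one express the final bound in terms of $Q_{j-1}$ and $r_{j-1}$, with constants depending only on $n,p,\nu,\tilde\eps,\sigma$. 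I expect the main obstacle to be purely the bookkeeping of the exponent identity for \trif{ce1}: once $\tau=(1+n\theta)/(n+1)$ has been verified, the rest reduces to organising the algebra and the passage between nested cylinders.
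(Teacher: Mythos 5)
Your proposal is correct and follows essentially the same route as the paper: Proposition~\ref{ppoinc} with $m=1$ applied to $u-w_j$, combined with \rif{ce} and \rif{eq:comparison prel 1} and the exponent identity $\frac{n}{n+1}\bigl[\frac{n+2}{(p-1)n+p}+\frac1n\bigr]=\frac{n+p}{(p-1)n+p}$ (your $\tau=(1+n\theta)/(n+1)$) for \rif{ce1}, and the triangle inequality with two applications of \rif{ce} on $Q_j$ and $Q_{j-1}$, plus the $\sigma$-dependent cylinder comparisons, for \rif{ce2}. The only cosmetic difference is that the paper proves the endpoint exponents and gets the full range of $q$ by H\"older's inequality rather than invoking quasi-subadditivity for $q<1$.
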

\begin{proof} To prove \rif{ce1}, we use Proposition \ref{ppoinc} with the choice $m=1$,
\[
q_1 := p-1 + \frac{p}{n} -\tilde \eps\,, \qquad q_2 := \frac{n}{n+1} q_1 = p-1 + \frac{1}{n+1} - \frac{n \tilde \eps}{n+1}
\]
and $v \equiv u-w_j \in L^{\infty}(T_j ; L^2(B_j))\cap L^{q_2}(T_j ; W^{1,q_2}_0(B_j))$; recall \rif{eq:comparison prel 1}.
This yields
\begin{eqnarray*}
&& \left(\mean{Q_j} |u-w_j|^{q_1} \, dx \, dt \right)^{1/q_1}
\\ && \qquad
\leq c \bigg(\bigg[\mean{Q_j} |Du-Dw_j|^{q_2} \, dx \, dt\bigg]^{1/q_2}
% \\ && \qquad \qquad \qquad \qquad \times
\bigg[\sup_{\tau} \int_{B_j} |u-w_j | \, dx\bigg]^{1/n} \bigg)^{n/(n+1)}\,.
\end{eqnarray*}
Let $\bar c_1$ be as in Lemma \ref{lemma:u-w comparison} be the constant corresponding to the choice $\tilde \eps n/(n+1)$ instead of $\tilde \eps$.
Substituting~\eqref{ce} and~\eqref{eq:comparison prel 1} into the previous estimate leads to
\begin{equation*}
\begin{split}
& \left(\mean{Q_j} |u-w_j|^{q_1} \, dx \, dt\right)^{1/q_1}
\\  &
\qquad \leq c \bar c_1^{n/(n+1)}  \left(\lambda \left[ \frac1\lambda \frac{|\mu|(Q_{j})}{r_{j}^{N-1}} \right]^{(n+2)/[(p-1)n+p]}
 \left[ |\mu|(Q_{j}) \right]^{1/n}
\right)^{n/(n+1)}
\\  &
\qquad \leq \bar c_3  r_{j-1} \lambda \left[ \frac1\lambda \frac{|\mu|(Q_{j-1})}{r_{j-1}^{N-1}} \right]^{(n+p)/[(p-1)n+p]}\,,
\end{split}
\end{equation*}
which, together with H\"older's inequality, proves \rif{ce1}.
Here we have also used that $N-1=n+1$ and the identity
\[
%\frac{n+2}{(p-1)n+p} + \frac1n = \frac{n+1}{n} \left[ \frac{n+p}{(p-1)n+p} \right]\,.
 \frac{n}{n+1} \left[ \frac{n+2}{(p-1)n+p} + \frac1n \right] =\frac{n+p}{(p-1)n+p}\,.
\]
As for \rif{ce2} we instead argue as follows:
\begin{eqnarray}
\nonumber &&  \left( \mean{Q_j} |Dw_{j-1}-Dw_j|^{q_2} \, dx \, dt \right)^{1/q_2}
\\ \nonumber && \qquad \leq \left(\frac{|Q_{j-1}|}{|Q_j|} \right)^{1/q_2}\left( \mean{Q_{j-1}} |Du-Dw_{j-1}|^{q_2} \, dx \, dt \right)^{1/q_2}
\\ \nonumber && \qquad  \qquad \qquad + \left( \mean{Q_{j}} |Du-Dw_{j}|^{q_2} \, dx \, dt\right)^{1/q_2}
\\ \nonumber && \qquad \leq  \bar c_1 \left[ \left(\frac{|Q_{j-1}|}{|Q_j|} \right)^{1/q_2}  + \frac{r_{j-1}^{N-1}}{r_{j}^{N-1}} \right] \lambda \left[ \frac1\lambda \frac{|\mu|(Q_{j-1})}{r_{j-1}^{N-1}} \right]^{(n+2)/[(p-1)n+p]}
% \\ \nonumber && \qquad  \qquad
% + \bar c_1 \lambda \left[ \frac1\lambda \frac{|\mu|(Q_{j})}{r_{j}^{N-1}} \right]^{(n+2)/[(p-1)n+p]}
 \\ \nonumber  && \qquad \leq \bar c_3 \lambda \left[\frac1\lambda \frac{|\mu|(Q_{j-1})}{r_{j-1}^{N-1}} \right]^{(n+2)/[(p-1)n+p]}\,,
\end{eqnarray}
where we have repeatedly applied~\eqref{ce} and used the fact that we are assuming $p \geq 2$. Now~\eqref{ce2} follows, again by H\"older's inequality as $\tilde \eps$ is arbitrary.
\end{proof}
The following lemma provides one of the key estimates to obtain Theorem~\ref{main1}.
\begin{lemma} \label{lemma:ce3}
Let $u$  be as in Theorem \ref{main1} and $w_{j-1},w_j$ as in \trif{CD-local}, with $j \geq 1$. Suppose further that
\begin{equation}\label{eq:meas cond 1}
    \frac{|\mu|(Q_{j-1})}{r_{j-1}^{N-1}} \leq \lambda
\end{equation}
and that the bounds
\begin{equation}\label{eq:w(j) 1}
 \frac{\lambda}{A} \leq |Dw_{j-1}| \leq A \lambda \quad \textrm{in} \ Q_{j}
 % \,,  \qquad   \frac{\lambda}{A} \leq |Dw_{j}| \leq A \lambda \quad \textrm{in} \ Q_{j+1}
\end{equation}
hold for some $A\geq 1$.
Then there exists a constant $\bar c_4$ depending only on $n,p,\nu,\sigma,A$ such that
\begin{equation}\label{ce3}
\mean{Q_{j}} |Du-Dw_j| \, dx \, dt \leq
\bar c_4 \left[\frac{|\mu|(Q_{j-1})}{r_{j-1}^{N-1}} \right]\,.
\end{equation}
\end{lemma}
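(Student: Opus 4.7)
The goal is to improve the sublinear bound of Lemma~\ref{lemma:u-w comparison}, where the density $\Lambda_{j-1}:=|\mu|(Q_{j-1})/r_{j-1}^{N-1}$ enters with the nonlinear exponent $\theta:=(n+2)/[(p-1)n+p]<1$, into the desired linear bound. The decisive input is \rif{eq:w(j) 1}, which forces the equation solved by $u$ to be non-degenerate of order $\lambda^{p-2}$ on $Q_j$; this lets the monotonicity inequality for $u-w_j$ be turned into an essentially uniformly parabolic $L^2$-type estimate at level $\lambda^{p-2}$. In the whole argument I set $\Lambda_i := |\mu|(Q_i)/r_i^{N-1}$ and use $\Lambda_j \leq c(\sigma)\Lambda_{j-1} \leq c(\sigma)\lambda$.

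My first step is a non-degeneracy dichotomy: since $Dw_j$ is continuous by Theorem~\ref{thm:Dw(j) continuous}, the bad set $B:=\{(x,t)\in Q_j : |Dw_j(x,t)|<\lambda/(2A)\}$ is well-defined, and on $B$ the triangle inequality together with \rif{eq:w(j) 1} gives $|Dw_{j-1}-Dw_j|\geq \lambda/(2A)$. Markov applied to \rif{ce2} at the maximal admissible exponent $q=p-1+\tfrac{1}{n+1}-\tilde\eps$ then yields
\[
\frac{|B|}{|Q_j|} \leq c(A)\Bigl[\frac{\Lambda_{j-1}}{\lambda}\Bigr]^{\theta q}.
\]
The algebraic identity $\theta\cdot (p-1+\tfrac{1}{n+1})=(n+2)/(n+1)>1$, valid for every $p\geq 2$, makes $\theta q$ strictly larger than one for $\tilde\eps$ small enough. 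Combining this measure bound with \rif{ce} via H\"older, and using $q(1-1/q)+1=q$ in the exponent arithmetic, then gives the bad-set contribution $\mean{Q_j}\chi_B|Du-Dw_j|\,dx\,dt \leq c\lambda[\Lambda_{j-1}/\lambda]^{\theta q} \leq c\Lambda_{j-1}$.

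My second step is a truncation energy inequality: testing the weak form of $\partial_t(u-w_j)-\divo[a(\cdot,Du)-a(\cdot,Dw_j)]=\mu$ against $T_k(u-w_j)$, using the monotonicity in the second line of \rif{asp} and discarding the nonnegative time-boundary term, yields for $p\geq 2$
\[
\int_{Q_j\cap\{|u-w_j|<k\}}\bigl(|Du|^2+|Dw_j|^2+s^2\bigr)^{(p-2)/2}|Du-Dw_j|^2\,dx\,dt \leq c\,k\,|\mu|(Q_j).
\]
On $G:=Q_j\setminus B$ the integrand is bounded below by $c(A,p)\lambda^{p-2}|Du-Dw_j|^2$, and Cauchy--Schwarz together with $|Q_j|\asymp\lambda^{2-p}r_j^N$ then produces the bound $\mean{Q_j}\chi_{G\cap\{|u-w_j|<k\}}|Du-Dw_j|\,dx\,dt \leq c(k\Lambda_j/r_j)^{1/2}$. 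For the complementary level set $\{|u-w_j|>k\}$ I would estimate its relative measure via \rif{eq:comparison prel 1}, and when $p>2$ through the sharper Markov bound coming from \rif{ce1}, then apply H\"older combined with \rif{ce}.

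Finally I pick $k\asymp r_{j-1}\Lambda_{j-1}$: this choice makes the Step~2 contribution $(k\Lambda_j/r_j)^{1/2}$ comparable to $c(\sigma)\Lambda_{j-1}$, and for the tail set $\{|u-w_j|>k\}$ the various bounds are absorbed into $c\Lambda_{j-1}$ using \rif{eq:meas cond 1}. Adding the three contributions gives \rif{ce3} with $\bar c_4 = \bar c_4(n,p,\nu,\sigma,A)$. The main obstacle I expect is precisely the joint calibration of the exponent $q$ appearing in \rif{ce} and \rif{ce2}, the truncation level $k$, and the admissible exponent range in \rif{ce1}, so that the final exponent on $\Lambda_{j-1}/\lambda$ is exactly $1$; this balance rests on $p\geq 2$ and on the strict inequality $\theta(p-1+1/(n+1))>1$, which is the Boccardo--Gallou\"et integrability margin available for measure-data parabolic problems.
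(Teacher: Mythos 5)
The bad-set computation in your Step~1 is correct (the identity $\theta(p-1+\tfrac{1}{n+1})=\tfrac{n+2}{n+1}>1$ with $\theta=\tfrac{n+2}{(p-1)n+p}$ is exactly right, and H\"older together with \rif{ce} then absorbs the bad set into $c\,\Lambda_{j-1}$), and the truncation energy estimate of Step~2 is a legitimate reformulation of what goes into \rif{eq:comparison prel 2}. However, Step~3 contains a genuine gap that the proposed calibration $k\asymp r_{j-1}\Lambda_{j-1}$ does not close for $p>2$.

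Here is the difficulty. Step~2 forces $k\lesssim r_{j-1}\Lambda_{j-1}$ (so that $(k\Lambda_j/r_j)^{1/2}\lesssim\Lambda_{j-1}$). But with $k\asymp r_{j-1}\Lambda_{j-1}$, neither \rif{eq:comparison prel 1} nor \rif{ce1} gives smallness of the tail set. Writing $\ell:=\Lambda_{j-1}/\lambda\le 1$: Chebyshev in time via \rif{eq:comparison prel 1} yields $|\{|u-w_j|>k\}|/|Q_j|\lesssim\Lambda_j r_j/k\asymp\sigma^{2-N}$, merely a constant; and Markov via \rif{ce1} at exponent $q_1=p-1+p/n-\tilde\eps$ gives $|\{|u-w_j|>k\}|/|Q_j|\lesssim\ell^{(\theta'-1)q_1}$ with $\theta'=\tfrac{n+p}{(p-1)n+p}<1$ for $p>2$, so the exponent is negative and the bound is $\ge1$, again useless. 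Once the tail-set measure is bounded only by a constant, H\"older with \rif{ce} yields a tail contribution $\lesssim\lambda\ell^\theta$, which strictly exceeds $\Lambda_{j-1}=\lambda\ell$ whenever $\theta<1$, i.e. for every $p>2$. Decreasing $k$ only worsens the measure bound, and increasing $k$ violates the Step~2 constraint; since $\theta'<1\le\beta$ for any admissible $k=r_{j-1}\lambda\ell^\beta$, the exponent arithmetic cannot be balanced. The Boccardo--Gallou\"et margin is present, but a single sharp truncation level does not extract it; one would need a multi-level iteration, or --- as the paper does --- the weighted estimate \rif{eq:comparison prel 2}, whose weight $(\alpha+|u-w_j|)^{-\xi}$ supplies decay simultaneously in both the small and large $|u-w_j|$ regimes, with $\alpha$ then chosen self-referentially so that the resulting inequality can be reabsorbed.

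It is also worth noting a structural difference: the paper never isolates a ``bad set'' for $|Dw_j|$. Instead it multiplies the integrand $|Du-Dw_j|$ by $|D\bar w_{j-1}|^{(p-2)(1+\gamma)}$, which has a lower bound \emph{everywhere} on $Q_j$ thanks to \rif{eq:w(j) 1}, then splits $|D\bar w_{j-1}|\le|D\bar w_j-D\bar w_{j-1}|+|D\bar w_j|$. The piece with $|D\bar w_j-D\bar w_{j-1}|$ is handled by \rif{ce2} at exponent $\xi(p-1)$ and closes because $\theta[(p-2)(1+\gamma)+1]\ge1$; the piece with $|D\bar w_j|$ is paired against \rif{eq:comparison prel 2}. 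Your good/bad-set split is a reasonable instinct, but it creates the tail problem that the paper's weighted argument is specifically designed to avoid.
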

\begin{proof}
Let us begin by fixing several parameters appearing in the proof:
%%%TUOMO : I tried to make the preparatory part more clear
\eqn{posi}
$$
\gamma  := \frac{1}{4(p-1)(n+1)}\,, \qquad \xi := 1+2\gamma\,,  \qquad \tilde \eps := \frac1{2(n+1)} \,,
$$
and, throughout the proof, we will apply Lemmas~\ref{lemma:u-w comparison} and~\ref{cor:comparison} with exponents
$$
0 < q \leq \bar q := \xi(p-1) = p-1 + \frac{1}{2(n+1)}\,.
$$
Let $\bar c_1$, $\bar c_2$, $\bar c_3$ be as in Lemma~\ref{lemma:u-w comparison} and Lemma~\ref{cor:comparison}, respectively, corresponding to these choices of $\sigma$ and $\tilde \eps$; therefore they ultimately depend only on $n,p,\nu, \sigma$.  We also set
\eqn{wbar}
$$
\bar w_{j-1} := \frac{w_{j-1}}{\lambda} \, , \qquad \bar w_{j} :=\frac{w_{j}}{\lambda}  \,.
$$
In what follows constants denoted by $c$ will only depend on $n,p,\nu,\sigma,A$ and will in general vary from line to line. We start to estimate the term on the left in~\eqref{ce3} with the aid of~\eqref{eq:w(j) 1} as follows:
\begin{eqnarray}
\nonumber
\mean{Q_j}  |Du-Dw_j| \, dx \, dt &\leq & A^{(p-2)(1+\gamma)} \mean{Q_j} |D\bar w_{j-1} |^{(p-2)(1+\gamma)} |Du-Dw_j| \, dx \, dt
 \\ \nonumber
& \leq& c \mean{Q_j} |D\bar w_{j} - D\bar w_{j-1} |^{(p-2)(1+\gamma)} |Du-Dw_j| \, dx \, dt
 \\ && \label{ce3:1}
 \qquad \qquad +c\mean{Q_j} |D\bar w_{j} |^{(p-2)(1+\gamma)} |Du-Dw_j| \, dx \, dt\,.
\end{eqnarray}
Appealing to H\"older's inequality, together with~\eqref{ce} and~\eqref{ce2}, gives us
\begin{eqnarray}
\nonumber &&
 \mean{Q_j} |D\bar w_{j} - D\bar w_{j-1} |^{(p-2)(1+\gamma)} |Du-Dw_j| \, dx \, dt
 \\ \nonumber &&
 \qquad \leq \lambda^{-(p-2)(1+\gamma)}\left( \mean{Q_j} |D w_{j} - D w_{j-1}  |^{(p-1)(1+\gamma)} \, dx \, dt\right)^{(p-2)/(p-1)}
  \\ \nonumber &&
 \qquad \qquad  \qquad \qquad   \qquad \qquad  \cdot \left( \mean{Q_j} |Du-Dw_j|^{p-1} \, dx \, dt \right)^{1/(p-1)}
 \\ \nonumber &&
 \qquad \leq  c(\sigma)\bar c_3^{(1+\gamma)(p-2)}\bar c_1\lambda \left[\frac1\lambda \frac{|\mu|(Q_{j-1})}{r_{j-1}^{N-1}} \right]^{\left[(1+\gamma)(p-2)+1 \right](n+2)/[(p-1)n+p]}\,.
\end{eqnarray}
But now, as
\eqn{precise}
$$
\frac{\left[(1+\gamma)(p-2)+1 \right](n+2)}{(p-1)n+p} >\frac{(p-1)(n+2)}{(p-1)n+p} \geq 1
$$
precisely for $p\geq 2$, \rif{eq:meas cond 1} implies
\[
\mean{Q_j} |D\bar w_{j} - D\bar w_{j-1} |^{(p-2)(1+\gamma)} |Du-Dw_j| \, dx \, dt \leq c \frac{|\mu|(Q_{j-1})}{r_{j-1}^{N-1}}
\]
with $c \equiv c (n,p,\nu,\sigma)$.
Therefore~\eqref{ce3:1} gives us
\begin{eqnarray}
\nonumber &&
\mean{Q_j}  |Du-Dw_j| \, dx \, dt
 \\ && \label{ce3:2}
 \qquad \leq c\mean{Q_j} |D\bar w_{j} |^{(p-2)(1+\gamma)} |Du-Dw_j| \, dx \, dt + c \frac{|\mu|(Q_{j-1})}{r_{j-1}^{N-1}}\,.
\end{eqnarray}
We then continue to estimate the first term on the right in the above display.  Applying H\"older's inequality,
together with~\eqref{eq:comparison prel 2}, and recalling that $\xi = 1+2 \gamma$, we obtain for any $\alpha>0$ that
\begin{eqnarray}
\nonumber && \mean{Q_j} |D\bar w_j|^{(p-2)(1+\gamma)} |Du-Dw_j| \, dx \, dt
 \\ \nonumber
  && \qquad \leq \mean{Q_j}\left[\lambda^{2-p} \frac{(|Du|+|Dw_j|)^{p-2}|Du-Dw_j|^2}{(\alpha+|u-w_j|)^{\xi}}\right]^{1/2}
   \\ \nonumber && \qquad \qquad \qquad \cdot
 \left[|D\bar w_j|^{(1+2\gamma)(p-2)}(\alpha+|u-w_j|)^\xi\right]^{1/2}\, dx \, dt
 \\ \nonumber && \qquad \leq
 \lambda^{(2-p)/2} \left(\mean{Q_j} \frac{(|Du|+|Dw_j|)^{p-2}|Du-Dw_j|^2}{(\alpha+|u-w_j|)^{\xi}}\, dx \, dt\right)^{1/2}
 \\ \nonumber && \qquad \qquad \qquad\cdot
    \left(\mean{Q_j}|D\bar w_j|^{\xi(p-2)}(\alpha+|u-w_j|)^\xi \, dx \, dt \right)^{1/2}
\\ \nonumber && \qquad  \leq
\left(\frac{\bar c_2}{\xi-1} \right)^{1/2}  \alpha^{(1-\xi)/2}\left[ \frac{|\mu|(Q_j)}{r_{j}^{N}}\right]^{1/2}
 \\  \label{eq:boccardo} && \qquad \qquad \qquad\cdot
 \left(\mean{Q_j}|D\bar w_j|^{\xi(p-2)}(\alpha+|u-w_j|)^\xi \, dx \, dt \right)^{1/2}\,.
%\\ \leq &
%c \left(\frac{|\mu|(Q_j)^{(n+2)/(n+1)}}{|Q_j|} \left(\mean{Q_j}|V(Du)-V(Dw)|^{2q/p} \, dx \, dt \right)^{n/(q(n+1))} \right)^{q/p} \, .
\end{eqnarray}
As the choice of $\alpha$ is still in our disposal, we set
\eqn{aalpha}
$$
\alpha := \left( % \left[\mean{Q_j}|D\bar w_j|^{\xi(p-2)}\, dx \, dt\right]^{-1}
\mean{Q_j}|D\bar w_j|^{\xi(p-2)}|u-w_j|^\xi \, dx \, dt \right)^{1/\xi}  + \delta
$$
for some small positive $\delta \in (0,1)$ to get
\begin{eqnarray}
\nonumber && \left(\mean{Q_j}|D\bar w_j|^{\xi(p-2)}(\alpha+|u-w_j|)^\xi \, dx \, dt \right)^{1/2}
\\ \nonumber && \qquad
\leq 2 \alpha^{\xi/2} \left(\mean{Q_j}|D\bar w_j|^{\xi(p-2)} dx \, dt \right)^{1/2} + 2 \alpha^{\xi/2}\,.
\end{eqnarray}
Notice that since $w_j$ belongs to the parabolic Sobolev space $L^p(T_j;W^{1,p}(B_j))$ and $\xi(p-1)<p$ by \rif{posi}, we have that $\alpha$ is finite by H\"older's inequality. The presence in \rif{aalpha} of the parameter $\delta$, which shall be sent to zero at the end of the proof, guarantees that $\alpha$ is positive. In the above display, the integral on the right can be estimated by mean of Lemma \ref{cor:comparison} as
\begin{eqnarray}
\nonumber && \mean{Q_j}|D\bar w_j|^{\xi(p-2)} dx \, dt
\\ \nonumber && \qquad \leq c \mean{Q_j}|D\bar w_{j-1}-D\bar w_j|^{\xi(p-2)} dx \, dt +c \mean{Q_j}|D\bar w_{j-1}|^{\xi(p-2)} dx \, dt
\\ \nonumber && \qquad \leq c \bar c_3^{\xi(p-2)}
\left[\frac1\lambda \frac{|\mu|(Q_{j-1})}{r_{j-1}^{N-1}} \right]^{\xi(p-2)(n+2)/[(p-1)n+p]} + c A^{\xi(p-2)}  \leq c\,,
\end{eqnarray}
owing to~\eqref{eq:meas cond 1} and~\eqref{eq:w(j) 1}, while the last constant $c$ depends only on $n,p,\nu,\sigma,A$. Thus~\eqref{eq:boccardo}, together with the last two displays, yields
$$
\mean{Q_j} |D\bar w_j|^{(p-2)(1+\gamma)} |Du-Dw_j| \, dx \, dt \leq
c \sqrt {\frac{\alpha}{r_j}} \left[\frac{|\mu|(Q_j)}{r_{j}^{N-1}} \right]^{1/2}
$$
while applying Young's inequality together with an obvious estimation in turn gives
\begin{equation}\nonumber \label{eq:comp inter 1}
\mean{Q_j} |D\bar w_j|^{(p-2)(1+\gamma)} |Du-Dw_j| \, dx \, dt \leq
\frac{c}{\beta}\frac{|\mu|(Q_{j-1})}{r_{j-1}^{N-1}} + \frac{\beta \alpha}{r_{j-1}}
\end{equation}
for all $\beta \in (0,1)$, where $c\equiv c(n,p,\nu,\sigma,A)$. Inserting this into~\eqref{ce3:2} leads to
\begin{equation}
 \label{ce3:3}
\mean{Q_j}  |Du-Dw_j| \, dx \, dt
\leq  \frac{c}{\beta}\frac{|\mu|(Q_{j-1})}{r_{j-1}^{N-1}} + \frac{\beta \alpha}{r_{j-1}}
\end{equation}
again for all $\beta \in (0,1)$, where $c\equiv c(n,p,\nu,\sigma,A)$ is in particular independent of $\beta$. We then focus on $\alpha$, which has been defined in \rif{aalpha}, and split as follows:
\begin{eqnarray}
\nonumber \alpha  \leq
c \left(\mean{Q_j}|D\bar w_{j-1}-D \bar w_j|^{\xi(p-2)}|u-w_j|^{\xi} \, dx \, dt \right)^{1/\xi} & &
\\  +
 c\left(\mean{Q_j}|D \bar w_{j-1}|^{\xi(p-2)}|u-w_j|^{\xi} \, dx \, dt \right)^{1/\xi} + \delta   & =: &  I_1 + I_2 + \delta \,.\label{lpha}
\end{eqnarray}
By \rif{posi}, as $\xi(p-1) = \bar q$, we get by~\eqref{ce1}-\eqref{ce2}, together with H\"older's inequality, that
\begin{eqnarray*}
I_1
& \leq &
 c \left(\mean{Q_j}|D\bar w_{j-1}-D \bar w_j|^{\bar q}\,dx \, dt \right)^{(p-2)/\bar q}\left(\mean{Q_j} |u-w_j|^{\bar q} \, dx \, dt \right)^{1/\bar q }
\\  & \leq &
c \bar c_3^{p-1} r_{j-1} \lambda
\left[\frac1\lambda \frac{|\mu|(Q_{j-1})}{r_{j-1}^{N-1}}
\right]^{[(p-2)(n+2) + n+p]/[(p-1)n+p]}\,.
\end{eqnarray*}
Since
\[
\frac{(p-2)(n+2) + n+p }{(p-1)n+p} = 1 + \frac{2(p-2)}{(p-1)n+p} \geq 1\,,
\]
precisely for $p \geq 2$ we have, in view of~\eqref{eq:meas cond 1}, that
%\begin{equation}\label{eq:I(2)}
$$
I_1 \leq c r_{j-1}
\left[\frac{|\mu|(Q_{j-1})}{r_{j-1}^{N-1}}
\right]
$$%\end{equation}
for $c \equiv c (n,p,\nu,\sigma)$.
On the other hand, using condition~\eqref{eq:w(j) 1} we obtain
%\begin{equation}\label{eq:I(1) 2}
$$
    I_2 \leq c A^{p-2} \left(\mean{Q_j} |u-w_j|^{\xi} \, dx \, dt \right)^{1/\xi}\,.
$$%\end{equation}
Using Proposition \ref{ppoinc}, estimate \eqref{eq:comparison prel 1}, and Young's inequality we get
\begin{eqnarray*}
&& \nonumber \left(\mean{Q_j} |u-w_j|^{\xi} \, dx \, dt \right)^{1/\xi} \leq \left(\mean{Q_j} |u-w_j|^{(n+1)/n} \, dx \, dt \right)^{n/(n+1)}
\\ && \nonumber \qquad \leq c \left(\mean{Q_j} |Du-Dw_j| \, dx \, dt
\left[ \sup_{t \in T_j} \int_{B_j} |u-w_j | \, dx\right]^{1/n} \right)^{n/(n+1)}
\\ && \nonumber \qquad \leq
c \left(\mean{Q_j} |Du-Dw_j| \, dx \, dt \right)^{n/(n+1)} r_{j-1}  \left[\frac{|\mu|(Q_{j-1})}{r_{j-1}^{N-1}} \right]^{1/(n+1)}
\\ &&  \qquad \leq cr_{j-1} \mean{Q_j} |Du-Dw_j| \, dx \, dt  + c r_{j-1} \left[\frac{|\mu|(Q_{j-1})}{r_{j-1}^{N-1}}\right]\,.
%\label{eq:I(1) intermediate}
\end{eqnarray*}
Combining the estimates contained in the last three displays with \rif{lpha} leads to
\[
\alpha \leq c_{*}r_{j-1} \mean{Q_j} |Du-Dw_j| \, dx \, dt  + c_{*} r_{j-1} \left[\frac{|\mu|(Q_{j-1})}{r_{j-1}^{N-1}}\right] + \delta
\]
with $c_{*}\equiv c_{*}(n,p,\nu,\sigma,A)$.
Inserting this finally into~\eqref{ce3:3} with $\beta = 1/(2c_*)$ and then reabsorbing terms and sending $\delta$ to zero finishes the proof.
\end{proof}
Next, a comparison estimate between $w_j$ and $v_j$.
\begin{lemma} \label{lemma:w-v}
Let $w_j$ and $v_j$ be as in~\eqref{CD-local} and~\eqref{CD-local v}, respectively, with $j \geq 0$.
For every $A \geq 1$ there exists a constant $\bar c_5 \equiv \bar c_5(n,p,\nu,L,A)$ such that the following holds:
\begin{eqnarray}
    \nonumber && \sup_{\frac12 Q_j}|Dw_j|+s  \leq A \lambda
    \\  && \nonumber \qquad \Longrightarrow \quad
    \mean{\frac12 Q_j} (|Dw_j| + |Dv_j|)^{p-2} |Dw_j-Dv_j|^2 \, dx \, dt \\ &&\hspace{4cm} + \mean{\frac12 Q_j} |Dw_j-Dv_j|^p \, dx \, dt \leq \bar c_5 \left[\omega(r_j)\right]^{2} \lambda^p\,. \label{w(j) vs v(j) ce2}
\end{eqnarray}
\end{lemma}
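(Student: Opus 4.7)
The plan is to subtract the weak formulations of \rif{CD-local} and \rif{CD-local v} on $\frac12 Q_j$ and test the difference with $\phi := w_j - v_j$, which vanishes on $\partial_{\rm par}(\frac12 Q_j)$ and is admissible after standard time-approximation. The time term produces $\frac12 \sup_{t}\int_{\frac12 B_j}(w_j - v_j)^2 \, dx \geq 0$, which can be discarded to obtain
\[
\int_{\frac12 Q_j}\langle a(x,t,Dw_j) - a(x_0,t,Dv_j),\, Dw_j - Dv_j\rangle\, dx\, dt \leq 0\,.
\]

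The next step is the decomposition $a(x,t,Dw_j) - a(x_0,t,Dv_j) = [a(x_0,t,Dw_j) - a(x_0,t,Dv_j)] + [a(x,t,Dw_j) - a(x_0,t,Dw_j)]$. The first bracket, paired with $Dw_j - Dv_j$ and bounded by integrating the monotonicity from \rif{asp} along the segment joining $Dv_j$ to $Dw_j$, is controlled from below (for $p\geq 2$) by $c\nu(s^2+|Dw_j|^2+|Dv_j|^2)^{(p-2)/2}|Dw_j - Dv_j|^2$. The Dini continuity from \rif{asp}, together with $|x-x_0|\leq r_j$, bounds the second bracket in absolute value by $L\omega(r_j)(s^2+|Dw_j|^2)^{(p-1)/2}|Dw_j-Dv_j|$. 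Writing
\[
X := \int_{\frac12 Q_j}\bigl(s^2+|Dw_j|^2+|Dv_j|^2\bigr)^{(p-2)/2}|Dw_j - Dv_j|^2\, dx\, dt\,,
\]
rearrangement yields the master inequality
\[
X \leq c(\nu,L)\,\omega(r_j)\int_{\frac12 Q_j}\bigl(s^2+|Dw_j|^2\bigr)^{(p-1)/2}|Dw_j-Dv_j|\, dx\, dt\,.
\]

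The decisive step is to deduce the bound with power $[\omega(r_j)]^2\lambda^p$. I would multiplicatively split $(s^2+|Dw_j|^2)^{(p-1)/2} = (s^2+|Dw_j|^2)^{1/2}\cdot(s^2+|Dw_j|^2)^{(p-2)/2}$ and apply Cauchy-Schwarz. The sup bound $|Dw_j|+s \leq A\lambda$ on $\frac12 Q_j$ gives $\int (s^2+|Dw_j|^2)\leq c(A)\lambda^2|\tfrac12 Q_j|$; used a second time, it converts $(s^2+|Dw_j|^2)^{p-2}\leq c(A)\lambda^{p-2}(s^2+|Dw_j|^2)^{(p-2)/2}$, and, together with the trivial inequality $(s^2+|Dw_j|^2)^{(p-2)/2}\leq (s^2+|Dw_j|^2+|Dv_j|^2)^{(p-2)/2}$ valid for $p\geq 2$, bounds the second factor from Cauchy-Schwarz by $c(A)\lambda^{(p-2)/2} X^{1/2}$. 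Combining yields
\[
X \leq c(A,\nu,L)\,\omega(r_j)\,\lambda^{p/2}\,|\tfrac12 Q_j|^{1/2}\,X^{1/2}\,,
\]
and solving for $X$ produces $\mean{\frac12 Q_j}(s^2+|Dw_j|^2+|Dv_j|^2)^{(p-2)/2}|Dw_j-Dv_j|^2\, dx\, dt \leq \bar c_5[\omega(r_j)]^2\lambda^p$. Both integrals in the claim then follow from the chain of pointwise inequalities $(s^2+|Dw_j|^2+|Dv_j|^2)^{(p-2)/2}|Dw_j-Dv_j|^2\geq c(|Dw_j|+|Dv_j|)^{p-2}|Dw_j-Dv_j|^2\geq c|Dw_j-Dv_j|^p$, where the first uses $s^2+|Dw_j|^2+|Dv_j|^2\geq \tfrac12(|Dw_j|+|Dv_j|)^2$ and the second uses $|Dw_j|+|Dv_j|\geq|Dw_j-Dv_j|$ together with $p\geq 2$.

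The hard part is extracting exactly the power $2$ on $\omega(r_j)$ rather than the weaker $p/(p-1)$ that a routine Young's inequality applied to the master inequality would produce. The key trick is the multiplicative splitting above: it invokes the pointwise sup bound on $|Dw_j|$ twice in order to convert just enough of the weight into explicit $\lambda$-factors, while still retaining the full $V$-monotonicity absorbable on the left. Notably, no separate pointwise bound on $|Dv_j|$ is needed, since the splitting only calls on $|Dw_j|$ and the weight inequality $(s^2+|Dw_j|^2)^{(p-2)/2}\leq (s^2+|Dw_j|^2+|Dv_j|^2)^{(p-2)/2}$ to match the $V$-form on the left.
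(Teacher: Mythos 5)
Your proof is correct, and it reaches the conclusion by a genuinely different route from the paper. The paper's proof is essentially a citation: it quotes the comparison estimate $\mean{\frac12 Q_j}(|Dw_j|+|Dv_j|)^{p-2}|Dw_j-Dv_j|^2\,dx\,dt\leq c\,[\omega(r_j)]^2\mean{\frac12 Q_j}(|Dw_j|+s)^p\,dx\,dt$ from \cite{KMjmpa} (Lemma 4.3; see also \cite{AM}) and then concludes by bounding the right-hand side with the assumed sup bound, the $|Dw_j-Dv_j|^p$ term being absorbed via $|Dw_j-Dv_j|\leq|Dw_j|+|Dv_j|$ and $p\geq 2$, exactly as you do at the end. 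You instead re-derive the key estimate from scratch: testing the difference of the two Cauchy--Dirichlet problems with $w_j-v_j$, discarding the nonnegative time term, using monotonicity of $\partial_z a$ along the segment (which for $p\geq2$ gives the weight $(s^2+|Dw_j|^2+|Dv_j|^2)^{(p-2)/2}$) and the modulus-of-continuity hypothesis in \rif{asp}, and then closing by Cauchy--Schwarz and reabsorption; your splitting $(s^2+|Dw_j|^2)^{(p-1)/2}=(s^2+|Dw_j|^2)^{1/2}(s^2+|Dw_j|^2)^{(p-2)/2}$, with the sup bound invoked twice inside the Cauchy--Schwarz step, is sound and yields the stated dependence $\bar c_5(n,p,\nu,L,A)$. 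What your route buys is a self-contained argument independent of the external lemma; what the paper's route buys is brevity and the reusability of the intermediate inequality, which in fact holds \emph{without} any sup bound: the symmetric split $(s^2+|Dw_j|^2)^{p/4}\cdot(s^2+|Dw_j|^2)^{(p-2)/4}|Dw_j-Dv_j|$ in Cauchy--Schwarz gives $[\omega(r_j)]^2$ times the $L^p$-average of $|Dw_j|+s$ directly, so the power $2$ on $\omega(r_j)$ is less delicate than your closing remark suggests --- the relevant tool is a plain quadratic Cauchy--Schwarz with reabsorption, not a Young inequality with exponents $(p,p')$, and the sup bound is only needed in the final trivial estimate. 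In a polished write-up you should also record the routine points you only gesture at: the Steklov/time-mollification justification of the test function and the finiteness of $X$ (so that dividing by $X^{1/2}$ is legitimate), the latter following from $w_j,v_j\in L^p(W^{1,p})$ on $\frac12 Q_j$.
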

\begin{proof}
The result is based on the following estimate:
$$ \mean{\frac12 Q_j} (|Dw_j| + |Dv_j|)^{p-2} |Dw_j-Dv_j|^2 \, dx \, dt
\leq c \left[\omega(r_j)\right]^2 \mean{\frac12 Q_j}(|Dw_j| + s)^p \, dx \,dt $$
that has been proved in~\cite[Lemma 4.3]{KMjmpa}  and \cite{AM}, with a constant $c\equiv c(n,p,\nu,L)$.
The inequality in display \rif{w(j) vs v(j) ce2} follows by trivially estimating
$$
\left[\omega(r_j)\right]^2 \mean{\frac12 Q_j}(|Dw_j| + s)^p \, dx \,dt \leq  \left[\omega(r_j)\right]^2 \sup_{\frac12 Q_j}(|Dw_j| + s)^p\leq \bar c_5 \left[\omega(r_j)\right]^{2} \lambda^p
$$
and taking into account that $p \geq 2$.
\end{proof}
 Finally, along the lines of Lemma \ref{lemma:ce3}, we yet prove another comparison estimate, this time between $u$ and $v_j$.
\begin{lemma} \label{Du vs Dv(j)}
Let $u$  be as in Theorem \ref{main1} and let $w_j$ and $v_j$ be as in~\eqref{CD-local} and~\eqref{CD-local v}, respectively, with $j \geq 1$. Suppose further that \trif{eq:meas cond 1} holds together with
\eqn{eq:w(j) 2}
$$
\left\{
\begin{array}{c}
\displaystyle \sup_{\frac12 Q_j} |Dw_{j}| +s \leq A\lambda\\ [13 pt ]  \displaystyle  \frac{\lambda}{A} \leq |Dw_{j-1}| \leq A \lambda \quad \textrm{in} \ Q_{j}\,,
 % \,,  \qquad   \frac{\lambda}{A} \leq |Dw_{j}| \leq A \lambda \quad \textrm{in} \ Q_{j+1}
\end{array}\right.$$
for some $A\geq 1$. There exist positive constants $\bar c_6 \equiv \bar c_6(n,p,\nu,L,A)$ and $\bar c_7 \equiv \bar c_7(n,p,\nu,\sigma, A)$ such that the following inequality holds:
\begin{equation}\label{Dw(j)-Dv(j)}
\mean{\frac12 Q_j} |Du-Dv_j| \, dx \, dt \leq  \bar c_6 \omega(r_j) \lambda + \bar c_7 \left[\frac{|\mu|(Q_{j-1})}{r_{j-1}^{N-1}} \right]\,.
\end{equation}
\end{lemma}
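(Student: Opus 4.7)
The plan is to apply the triangle inequality
$$
\mean{\frac12 Q_j} |Du-Dv_j| \, dx \, dt \leq \mean{\frac12 Q_j} |Du-Dw_j| \, dx \, dt + \mean{\frac12 Q_j} |Dw_j-Dv_j| \, dx \, dt
$$
and estimate the two pieces with different tools. The first is immediate: Lemma \ref{lemma:ce3} applies under the hypotheses \rif{eq:meas cond 1} and the two-sided bound on $|Dw_{j-1}|$ from \rif{eq:w(j) 2}, and after paying the constant $|Q_j|/|\frac12 Q_j| = 2^{n+2}$ it delivers control by a multiple of $|\mu|(Q_{j-1})/r_{j-1}^{N-1}$, which is absorbed by the second term on the right-hand side of the claim.

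The comparison between $w_j$ and $v_j$ is the real work. A naive application of Jensen's inequality to the $L^p$ bound in \rif{w(j) vs v(j) ce2} would deliver only a factor $[\omega(r_j)]^{2/p}\lambda$, which, because $\omega\le 1$ and $p\geq 2$, is strictly weaker than the required $\omega(r_j)\lambda$. To recover the sharp exponent I would split $\frac12 Q_j = G\cup B$ with $G := \{|Dw_j|\geq \lambda/(2A)\}$ and $B$ its complement. On $G$ the lower bound $|Dw_j|+|Dv_j|\geq \lambda/(2A)$ is precisely what is needed to deweight the first integral in \rif{w(j) vs v(j) ce2}: multiplying by $(\lambda/(2A))^{2-p}$ gives an unweighted control $\mean{\frac12 Q_j}\mathbf{1}_G|Dw_j-Dv_j|^2\,dx\,dt\leq c(A)[\omega(r_j)]^2\lambda^2$, and Jensen/Cauchy--Schwarz then produces the desired $c(A)\omega(r_j)\lambda$ bound on $\mean{\frac12 Q_j}\mathbf{1}_G|Dw_j-Dv_j|\,dx\,dt$.

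On $B$ the combined pointwise bounds $|Dw_{j-1}|\geq \lambda/A$ (from \rif{eq:w(j) 2}) and $|Dw_j|< \lambda/(2A)$ force $|Dw_{j-1}-Dw_j|\geq \lambda/(2A)$, so Markov's inequality together with \rif{ce2} from Lemma \ref{cor:comparison}, applied with $q=p-1$, yields the measure estimate
$$
\frac{|B|}{|Q_j|} \leq c(A)\, \theta^{(p-1)(n+2)/[(p-1)n+p]}\,, \qquad \theta := \frac{|\mu|(Q_{j-1})}{\lambda\,r_{j-1}^{N-1}} \leq 1\,.
$$
H\"older's inequality against the $L^p$ bound from \rif{w(j) vs v(j) ce2} then controls the $B$-contribution by $c(A)\,\theta^{a}\,[\omega(r_j)]^{2/p}\lambda$ with $a = (p-1)^2(n+2)/[p((p-1)n+p)]$, and Young's inequality in the elementary form $\theta^a \omega^{2/p}\leq \theta+\omega$ (valid when $\theta,\omega\in[0,1]$ and $a+2/p\geq 1$) closes the estimate. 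The verification that $a + 2/p \geq 1$ reduces to the algebraic inequality $(p-1)^2(n+2) \geq (p-2)[(p-1)n+p]$, which simplifies to $(p-1)n + (p-1)^2 + 1 \geq 0$ and is precisely where the assumption $p\geq 2$ enters. The main obstacle is exactly this exponent balancing: the $L^p$ control from Lemma \ref{lemma:w-v} is too weak to be used directly in $L^1$, so one is forced to exploit the weighted $L^2$ bound on a region where $|Dw_j|+|Dv_j|$ has a lower bound of order $\lambda$, and the pointwise lower bound on $|Dw_{j-1}|$ in \rif{eq:w(j) 2}, together with the density-type smallness of $B$ guaranteed by Lemma \ref{cor:comparison}, is what trades off against the loss $[\omega(r_j)]^{2/p}$ to recover the sharp factor $\omega(r_j)$.
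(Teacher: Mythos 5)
Your proof is correct, but it takes a genuinely different route from the paper's. You estimate $\mean{\frac12 Q_j}|Dw_j-Dv_j|$ by a measure-theoretic good/bad decomposition: $G := \{|Dw_j|\geq \lambda/(2A)\}$, where the weight $(|Dw_j|+|Dv_j|)^{p-2}$ can be bounded below by $c(A)\lambda^{p-2}$ so that Cauchy--Schwarz against \eqref{w(j) vs v(j) ce2} recovers the sharp factor $\omega(r_j)$, and the complement $B$, whose small relative measure you extract from Markov applied to $|Dw_{j-1}-Dw_j|\geq \lambda/(2A)$ together with \eqref{ce2}, then closing by H\"older in $L^p$ and Young. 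The paper instead avoids any explicit level set: it inserts the weight $|D\bar w_{j-1}|^{(p-2)/p'}$ (bounded above and below thanks to \eqref{eq:w(j) 2}$_2$), splits this weight by the triangle inequality into $|D\bar w_j|^{(p-2)/p'}$ plus $|D\bar w_{j-1}-D\bar w_j|^{(p-2)/p'}$, and estimates the two pieces pointwise on all of $\frac12 Q_j$ --- the first against the weighted $L^2$ bound, the second by H\"older, \eqref{ce2}, and Young. Both routes exploit the identical structural facts (the weighted $L^2$ comparison \eqref{w(j) vs v(j) ce2}, the nondegeneracy of $Dw_{j-1}$, and the smallness encoded in \eqref{ce2}); your version makes the mechanism --- coercivity on the large-gradient set, measure smallness of its complement --- more visible, while the paper's weight-insertion is more compact and, because it distributes the Young weights asymmetrically, delivers a $\bar c_6$ that is independent of $\sigma$ as claimed. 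As written, your ``elementary'' Young inequality $\theta^a\omega^{2/p}\leq\theta+\omega$ leaves a constant $c(A,\sigma)\bar c_5^{1/p}$ multiplying both terms, so $\bar c_6$ would inherit a spurious $\sigma$-dependence; this is harmless for the downstream use but should be removed by a weighted Young, e.g.~$\theta^{(p-2)/p}\omega^{2/p}\leq c(\eta)\theta + \eta\omega$ with $\eta$ chosen to cancel the prefactor. Finally, the remark that $p\geq 2$ is ``precisely'' what makes $a+2/p\geq 1$ is imprecise --- the reduced inequality $(p-1)n+(p-1)^2+1\geq0$ holds for all $p$; the assumption $p\geq 2$ is genuinely used elsewhere, notably to deweight $(|Dw_j|+|Dv_j|)^{p-2}$ from below on $G$ and to have $\theta\leq1$ via \eqref{eq:meas cond 1}.
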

\begin{proof} We shall keep the notation introduced for Lemma \ref{lemma:ce3}; in particular, we recall \rif{wbar}.
Inequality~\eqref{eq:w(j) 2}$_1$ allows to use~\eqref{w(j) vs v(j) ce2} so that
\begin{eqnarray}\nonumber
&& \mean{\frac12 Q_j} (|Dw_j| + |Dv_j|)^{p-2} |Dw_j-Dv_j|^2 \, dx \, dt \\ && \hspace{3cm} + \mean{\frac12 Q_j} |Dw_j-Dv_j|^p \, dx \, dt\leq \bar c_5 \left[\omega(r_j)\right]^{2} \lambda^p\,. \label{w(j) vs v(j) ce4}
\end{eqnarray}
%Moreover, due to the first inequality in~\eqref{eq:w(j) 2}, by~\eqref{Dw(j) bounded} we have that
%\begin{equation}\label{Dw(j) bounded 2}
%\sup_{\frac12 Q_j } |Dw_j|  \leq 2 c_3 \lambda\,.
%\end{equation}
With $p'=p/(p-1)$, by \rif{eq:w(j) 2}$_2$ we continue estimating as
\begin{eqnarray} \nonumber
 \mean{\frac12 Q_j} |Dw_j-Dv_j| \, dx \, dt  & \leq &  A^{(p-2)/p'} \mean{\frac12 Q_j} |D\bar w_{j-1}|^{(p-2)/p'}|Dw_j-Dv_j| \, dx \, dt
\\ \nonumber &\leq & c\mean{\frac12 Q_j} |D\bar w_{j}|^{(p-2)/p'}|Dw_j-Dv_j| \, dx \, dt
\\ & & + c\mean{\frac12 Q_j} |D\bar w_{j-1}-D\bar w_{j}|^{(p-2)/p'}|Dw_j-Dv_j| \, dx \, dt \label{euna}
\end{eqnarray}
with $c$ depending only on $p$ and $A$ and we are using the notation in \rif{wbar}.
As for the first term in the right hand side of \rif{euna}, we notice that since $p \geq 2$ we have $(p-2)/2 \leq (p-2)/p'$ so that~\eqref{eq:w(j) 2}$_1$ allows to estimate
\begin{eqnarray*} \nonumber
&& \mean{\frac12 Q_j} |D\bar w_{j}|^{(p-2)/p'}|Dw_j-Dv_j| \, dx \, dt
\\ \nonumber & & \qquad \leq A^{(p-2)^2/(2p)} \lambda^{(2-p)/2}\mean{\frac12 Q_j} |D w_{j}|^{(p-2)/2}|Dw_j-Dv_j| \, dx \, dt
\end{eqnarray*}
so that, using H\"older's inequality and~\eqref{w(j) vs v(j) ce4} yields
\begin{eqnarray} \nonumber
&& \mean{\frac12 Q_j} |D\bar w_{j}|^{(p-2)/p'}|Dw_j-Dv_j| \, dx \, dt
\\ \nonumber & & \qquad \leq c\lambda^{(2-p)/2} \left(\mean{\frac12 Q_j} (|Dw_j| + |Dv_j|)^{p-2} |Dw_j-Dv_j|^2 \, dx \, dt \right)^{1/2}
\\  & & \qquad \leq c\lambda^{(2-p)/2} \bar c_5^{1/2} \omega(r_j) \lambda^{p/2}  = c \omega(r_j) \lambda \label{edue}
\end{eqnarray}
for $c \equiv c(n,p,\nu,L,A)$. The second term in the right hand side of \rif{euna} is estimated with the aid of H\"older's inequality,~\eqref{ce2},~\eqref{w(j) vs v(j) ce4} and finally Young's inequality, with conjugate exponents $(p/2,p/(p-2))$ when $p>2$; this means
\begin{eqnarray} \nonumber
&& \mean{\frac12 Q_j} |D\bar w_{j-1}-D\bar w_{j}|^{(p-2)/p'}|Dw_j-Dv_j| \, dx \, dt
\\ \nonumber & & \qquad \leq c \left(\mean{Q_j} |D\bar w_{j-1}-D\bar w_{j}|^{p-2}\, dx \, dt\right)^{1/p'}
% \\ && \qquad \quad  \cdot
\left(\mean{\frac12 Q_j} |Dw_j-Dv_j|^p \, dx \, dt\right)^{1/p}
\\ \nonumber & & \qquad \leq \bar c_3^{(p-2)/p'} \bar c_5^{1/p}\left[\frac1\lambda \frac{|\mu|(Q_{j-1})}{r_{j-1}^{N-1}} \right]^{[(p-2)/p](p-1)(n+2)/[n(p-1)+p]}
\left[\omega(r_j)\right]^{2/p} \lambda
\\ & & \qquad \leq
c \left[\frac1\lambda \frac{|\mu|(Q_{j-1})}{r_{j-1}^{N-1}} \right]^{(p-1)(n+2)/[n(p-1)+p]} \lambda +  \omega(r_j) \lambda \nonumber
\end{eqnarray}
for $c\equiv c(n,p,\nu,L,\sigma, A)$.
Thanks to~\eqref{eq:meas cond 1} and \rif{precise}, we get
\begin{equation} \nonumber
\mean{\frac12 Q_j} |D\bar w_{j-1}-D\bar w_{j}|^{(p-2)/p'}|Dw_j-Dv_j| \, dx \, dt \leq
\tilde c \left[\frac{|\mu|(Q_{j-1})}{r_{j-1}^{N-1}} \right]  + \omega(r_j) \lambda
\end{equation}
with $\tilde c\equiv \tilde c(n,p,\nu,L,\sigma, A)$. By using the last inequality together with \rif{euna} and \rif{edue} we conclude with
%\eqn{simple}
$$
\mean{\frac12 Q_j} |Dw_j-Dv_j| \, dx \, dt \leq c \omega(r_j) \lambda + \tilde c \left[\frac{|\mu|(Q_{j-1})}{r_{j-1}^{N-1}} \right]
$$
for $c\equiv c(n,p,\nu,L,A)$ and $\tilde c\equiv \tilde c(n,p,\nu,L,\sigma, A)$. Note next that the assumptions of this lemma fulfill also the  assumptions of Lemma~\ref{lemma:ce3}. Appealing then to~\eqref{ce3} and triangle inequality finishes the proof.
\end{proof}
%\begin{remark} The essence of the previous lemma relies in the possibility of getting a sharp estimate for the integral appearing in \rif{simple}. Indeed, a simple application of Lemma \ref{lemma:w-v} would give
%$$
%\mean{\frac12 Q_j} |Du-Dv_j| \, dx \, dt \leq  \bar c_5^{2/p} [\omega(r_j)]^{2/p} \lambda $$
%which does not exhibit the correct power for $\omega(r_j)$.
%\end{remark}
\subsection{Comparison lemmas for SOLA}\label{SOLA} In this section we show that the basic comparison Lemmas \ref{lemma:u-w comparison} and \ref{cor:comparison} hold for SOLA in a suitably modified way. This fact ultimately allows to prove Theorem \ref{solath} by mean of the same proofs already given for the other theorems when considered for energy solutions. Before starting, since the setting here is the one defined in Theorem \ref{solath}, we assume the existence of functions $u_h$ being local weak solutions to the equations considered in \rif{appr} and such that
\eqn{convergence}
$$
Du_h \in L^{p}, \quad Du_h \to Du \ \  \mbox{in}\ \ L^{p-1},\quad u_h \to u \quad \mbox{and} \quad Du_h \to Du \ \ a.e.
$$
hold. The measures $\mu_h$ weakly* converge to $\mu$ and \rif{convergencemeasures} holds.
Finally, since all the results we are interested in are local in nature, up to considering subsets compactly contained in $\Omega_T$, we assume w.l.o.g.~that the convergences in \rif{convergence} are valid in the whole $\Omega_T$.

Moreover, in this section we keep the notation already introduced in Section \ref{comp maps} about the cylinders $Q_j$, but the functions
$w_j$ will be defined in a different way: what matters here is not that they are solving Cauchy-Dirichlet problems as \rif{CD-local}, but rather that they satisfy comparison estimates as in Lemmas \ref{lemma:u-w comparison} and \ref{cor:comparison}. Indeed, once these lemmas are available for some suitably regular maps $w_j$ then all the subsequent constructions can be replicated verbatim. More precisely, we have the following:
\begin{lemma}\label{lemma:u-w comparisonApp}
Let $u$ be a SOLA to \trif{maineq} as in Theorem \ref{solath} and let $\tilde \eps \in (0,1/(n+1)]$; moreover, let $\{Q_j\}$ be the sequence of cylinders considered in \trif{shri}. Then there exists a sequence of functions $w_j$
such that
\begin{itemize}
%%%TUOMO : C^0 is already local
\item $ w_j \in C^0(T_j; L^{2}_{\loc}(B_j))\cap L^{p}_{\loc}(T_j; W^{1,p}_{\loc}(B_j)) $
% \qquad \mbox{whenever} \ \ \tilde T \Subset T_j$\\ %%%ROS but then \tilde T becomes T_j
\\
\item $
w_j \ \mbox{is a local weak solution to}\  \partial_t w_j-\divo\, a(x,t,Dw_j)=0\ \ \mbox{in $ Q_j$}
$
\\
\item
There exists a constant $\bar c_1\equiv \bar c_1(n,p,\nu,\tilde \eps)$
such that
\begin{equation}\label{ceAp}
\left(\mean{Q_j} |Du - D w_j|^q \, dx \, dt\right)^{1/q}
\leq \bar c_1 \lambda \left[\frac1\lambda \frac{|\mu|(\lfloor Q_j\rfloor_{\rm par})}{r_j^{N-1}} \right]^{(n+2)/[(p-1)n+p]}
\end{equation}
holds for any $0<q\leq p-1+1/(n+1)-\tilde \eps$
\\
\item
There exists a constant $\bar c_2 \equiv \bar c_2(n,p,\nu)$
\begin{equation}\label{ceApp}
\mean{Q_j} \frac{(|Du|+|Dw_j|)^{p-2}|Du-Dw_j|^2}{(\alpha + |u-w_j|)^{\xi}}\,dx \, dt
\leq \bar  c_2 \frac{\alpha^{1-\xi}}{\xi-1} \frac{|\mu|(\lfloor Q_j\rfloor_{\rm par})}{\lambda^{2-p}r_j^{N}}
\end{equation}
holds for any $\alpha>0$ and $\xi>1$.
\item There exists a constant $\bar c_3 \equiv \bar c_3(n,p,\nu,\tilde \eps,\sigma)$ such that the inequality
\begin{equation}\label{ce1App}
\left(\mean{Q_j} |u-w_j|^{q} \, dx \, dt\right)^{1/q} \leq
\bar c_3  r_{j-1} \lambda \left[ \frac1\lambda \frac{|\mu|(\lfloor Q_{j-1}\rfloor_{\rm par})}{r_{j-1}^{N-1}} \right]^{(n+p)/[(p-1)n+p]}
\end{equation}
holds whenever $0<q\leq p-1+p/n-\tilde \eps$ and
\begin{equation}\label{ce2App}
\left( \mean{Q_j} |Dw_{j-1}-Dw_j|^{q} \, dx \, dt \right)^{1/q} \leq \bar c_3 \lambda \left[ \frac1\lambda \frac{|\mu|(\lfloor Q_{j-1}\rfloor_{\rm par})}{r_{j-1}^{N-1}} \right]^{(n+2)/[(p-1)n+p]}
\end{equation}
holds whenever $0<q\leq p-1+1/(n+1)-\tilde \eps$.
\end{itemize}
\end{lemma}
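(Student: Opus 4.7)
The plan is to obtain $w_j$ as a limit of auxiliary maps built on the regular approximants $u_h$. For each $h \in \en$, define $w_{j,h}$ as the unique energy solution to the Cauchy–Dirichlet problem
\begin{equation*}
    \partial_t w_{j,h} - \divo\, a(x,t,Dw_{j,h}) = 0 \ \ \mbox{in $Q_j$}, \qquad w_{j,h} = u_h \ \ \mbox{on $\partial_{\rm par} Q_j$}.
\end{equation*}
Since each $u_h$ is an energy solution to \trif{appr} with $\mu_h \in L^\infty$, the pair $(u_h, w_{j,h}, \mu_h)$ falls within the scope of the already proved Lemmas \ref{lemma:u-w comparison} and \ref{cor:comparison}. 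Applying them yields the inequalities \trif{ce}, \trif{eq:comparison prel 1}, \trif{eq:comparison prel 2}, \trif{ce1} and \trif{ce2} for $(u_h,w_{j,h},\mu_h)$ in place of $(u, w_j, \mu)$, with the same constants $\bar c_1, \bar c_2, \bar c_3$ since these depend only on $n, p, \nu, \tilde\ep, \sigma$.

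Next, I would use standard parabolic energy estimates for $w_{j,h}$ and the uniform $L^{p-1}$ bounds on $Du_h$ from \trif{convergence} to extract a (diagonal) subsequence along which $w_{j,h}$ converges weakly in $L^p(T_j; W^{1,p}_{\loc}(B_j))$ and a.e.\ to some limit $w_j$. The decisive upgrade is a.e.\ convergence of the gradients $Dw_{j,h} \to Dw_j$, which is needed to pass the nonlinear integrals to the limit. For this I would invoke the Boccardo–Murat type monotonicity argument, exploiting the strict monotonicity of $z \mapsto a(x,t,z)$ built into \trif{asp}; the very same argument identifies $w_j$ as a weak solution of $\partial_t w_j - \divo\, a(x,t, Dw_j) = 0$ on $Q_j$, i.e.\ the second bullet in the statement.

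With $Dw_{j,h} \to Dw_j$ and $Du_h \to Du$ a.e.\ at hand, Fatou's lemma transfers the left-hand sides of \trif{ce}, \trif{eq:comparison prel 2}, \trif{ce1} and \trif{ce2} from level $h$ to their limit counterparts \trif{ceAp}, \trif{ceApp}, \trif{ce1App}, \trif{ce2App}. On the right-hand sides the weak-$\ast$ convergence $\mu_h \rightharpoonup \mu$ combined with the concentration bound \trif{convergencemeasures} yields
\begin{equation*}
\limsup_{h} \, |\mu_h|(Q_j) \leq |\mu|(\lfloor Q_j\rfloor_{\rm par})\,,
\end{equation*}
and this is precisely why the parabolic closure $\lfloor Q_j\rfloor_{\rm par}$ replaces $Q_j$ in the statement.

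The delicate point — and the main obstacle — is the passage to the limit in the Boccardo-type estimate \trif{ceApp}: the integrand $(|Du_h|+|Dw_{j,h}|)^{p-2}|Du_h-Dw_{j,h}|^2/(\alpha + |u_h-w_{j,h}|)^\xi$ has no uniform integrable majorant, and the denominator depends on the approximants. However, for any $\alpha > 0$ the denominator is bounded below by $\alpha^\xi$, and the a.e.\ convergence of both $u_h \to u$ and $w_{j,h} \to w_j$ — the latter following via Rellich compactness from the uniform parabolic energy bounds and \trif{eq:comparison prel 1} — combined with the a.e.\ gradient convergence makes Fatou applicable. The argument for \trif{ce1App} proceeds analogously: it is first derived at level $h$ by applying Proposition \ref{ppoinc} to $u_h - w_{j,h}$ exactly as in the proof of Lemma \ref{cor:comparison}, and then the estimate is stable under the joint passage to the limit via a.e.\ convergence on the left and the measure bound above on the right.
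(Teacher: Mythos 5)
Your overall scheme is the same as the paper's: solve the homogeneous Cauchy--Dirichlet problems in $Q_j$ with boundary datum $u_h$, apply Lemmas \ref{lemma:u-w comparison} and \ref{cor:comparison} at level $h$ (with $\mu_h$), and pass to the limit using Fatou on the left and \rif{convergencemeasures} on the right. The passage of \rif{ceApp} via Fatou and the appearance of $\lfloor Q_j\rfloor_{\rm par}$ are handled correctly. The problem is the compactness step.

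The claim that ``standard parabolic energy estimates'' together with the uniform $L^{p-1}$ bounds on $Du_h$ give weak convergence of $w_{j,h}$ in $L^p(T_j;W^{1,p}_{\loc}(B_j))$ does not hold as stated. The global energy estimate for $w_{j,h}$ is obtained by testing with $w_{j,h}-u_h$ and therefore controls $\|Dw_{j,h}\|_{L^p(Q_j)}$ only in terms of $\|Du_h\|_{L^p(Q_j)}$, which is \emph{not} uniformly bounded in $h$: the SOLA $u$ need not belong to the energy space \rif{regas}, and the approximants $u_h$ converge only in $L^{p-1}(W^{1,p-1})$. The only uniform information you actually have is the $L^q(Q_j)$ bound on $Dw_{j,h}$ with $q\leq p-1+1/(n+1)-\tilde\eps<p$, coming from \rif{ce} and \rif{convergence}; this gives neither weak $L^p_{\loc}$ compactness of the gradients nor the uniform local energy framework in which a Boccardo--Murat monotonicity argument (with its truncation test functions and time-derivative terms) is usually run, so both the extraction of the limit and the identification of the limit equation are left unjustified. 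The paper's fix is exactly at this point: since each $w_{j,h}$ solves a homogeneous equation with Dini-continuous coefficients and its gradient is uniformly bounded in $L^{p-1}(Q_j)$, the interior regularity theory applies uniformly in $h$ --- Corollary \ref{cor:Dw(j) bounded 2} gives uniform local $L^\infty$ gradient bounds, and the results behind Theorem \ref{thm:Dw(j) continuous} (from \cite{KMjmpa}) give local equicontinuity of $w_{j,h}$ and $Dw_{j,h}$ --- so Ascoli--Arzel\`a plus a diagonal argument yields locally uniform convergence of $w_{j,h}$ and $Dw_{j,h}$. This simultaneously provides the $L^p_{\loc}(W^{1,p}_{\loc})$ membership of the limit, the a.e.\ (indeed locally uniform) gradient convergence needed for Fatou, and the identification of $w_j$ as a weak solution, making the Boccardo--Murat machinery unnecessary. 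If you insert this regularity-based compactness step in place of the energy-estimate claim, the rest of your argument goes through as written.
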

\begin{proof} The proof goes via approximation; we prove \rif{ceAp}-\rif{ceApp}, the proof of the remaining inequalities being similar. Let us fix a cylinder $Q_j$ and consider the sequence appearing in \rif{convergence} and define
$$
   \tilde w_h \in C^0(T_j ; L^2(B_j))\cap L^p(T_j ; W^{1,p}(B_j))
$$
be the unique solution to the Cauchy-Dirichlet problem
$$
    \begin{cases}
    \partial_t \tilde w_h-\divo\, a(x,t,D\tilde w_h)=0&\mbox{in $ Q_j$}
    \\
    \hfill \tilde w_h=u_h& \mbox{on $\partial_{\rm par} Q_j$\,.}
    \end{cases}
$$
Applying Lemma \ref{lemma:u-w comparison} to this context gives
\begin{equation}\label{ce0}
\left(\mean{ Q_j} |Du_h - D \tilde w_h|^q \, dx \, dt\right)^{1/q}
\leq \bar c_1 \lambda \left[\frac1\lambda \frac{|\mu_h|(Q_j)}{r_j^{N-1}} \right]^{(n+2)/[(p-1)n+p]}
\end{equation}
for any $0<q\leq p-1+1/(n+1)-\tilde \eps$ and moreover
\begin{equation}\label{ce00}
\mean{Q_j} \frac{(|Du_h|+|D\tilde w_h|)^{p-2}|Du_h-D\tilde w_h|^2}{(\alpha + |u_h-\tilde w_h|)^{\xi}}\,dx \, dt
\leq \bar  c_2 \frac{\alpha^{1-\xi}}{\xi-1} \left[\frac{|\mu_h|(Q_j)}{\lambda^{2-p}r_j^{N}}\right]\,.
\end{equation}
From \rif{ce0} and \rif{convergence} it follows that the sequence $\{D\tilde w_h\}$ is bounded in $L^{p-1}(Q_j)$. We now notice that $\tilde w_h$ is an energy solution and solves an equation with Dini-continuous coefficients (see Theorem \ref{thm:Dw(j) continuous} and subsequent Remark \ref{dinim}), moreover all these equations satisfy assumptions \rif{asp}-\rif{aspd} uniformly in $h$. Hence, by interior regularity theory \cite{KM} and in particular by Corollary \ref{cor:Dw(j) bounded 2}, it follows that the the maps $\{D\tilde w_h\}$ are locally uniformly bounded in $L^{\infty}$.  Again by interior regularity theory (see for instance the results in \cite{KMjmpa}) we have that the maps $\tilde w_h$ and $\{D\tilde w_h\}$ are locally uniformly equicontinuous in $Q_j$. Therefore, by Ascoli-Arzel\`a's theorem and a standard diagonal argument, we may assume that, up to a not relabeled sequence, there exists a limit map $w  \in L^{p}_{\loc}(T_j; W^{1,p}_{\loc}(B_j))$ such that $\tilde w_h\to w$ in $L^{p}_{\loc}(T_j; W^{1,p}_{\loc}(B_j))$, $D\tilde w_h \to Dw$ and $\tilde w_h\to w$ locally uniformly and almost everywhere. As a consequence, $w$ weakly solves $\partial_t w-\divo\, a(x,t,Dw)=0$ in $Q_j$. At this stage the proof of \rif{ceAp} and \rif{ceApp} follows taking $w_j :=w$, letting $h \to \infty$ in \rif{ce0}-\rif{ce00} and using Fatou's lemma to deal with the left hand sides and \rif{convergencemeasures} to deal with the right hand sides. Inequalities in displays \rif{ce1App}-\rif{ce2App} can be similarly proved by approximation starting by the analogs of \rif{ce1}-\rif{ce2}, respectively, when written for $u_h$ and $\tilde w_h$ as already done for \rif{ce0}-\rif{ce00}.
\end{proof}
\section{Proof of Theorems~\ref{main1} and \ref{main3}}\label{sec3}
\subsection{Proof of Theorem~\ref{main1}} The proof goes in several steps and involves a rather delicate induction argument. In the following we select a  Lebesgue point of the spatial gradient $(x_0,t_0) \in \Omega_T$, i.e., 
\eqn{lp0}
$$
\lim_{\varrho \to 0}\mean{Q_\varrho(x_0,t_0)} Du \, dx \, dt = Du(x_0,t_0)\,.
$$
Almost every point in $\Omega_T$, with respect to the Lebesgue measure in $\er^{n+1}$, satisfies such a property (see \cite[Chapter 1, Page 8]{Steinbig}).

\subsection*{Step 1: Choice of constants and basic setup.} With $(x_0,t_0)$ being fixed at the beginning we shall verify \rif{unoimpli} with $Q_{2r}^{\lambda}(x_0,t_0)$ instead of $Q_{r}^{\lambda}(x_0,t_0)$; this is of course causes no loss of generality. When finding constants $c, R_0$ such that \rif{unoimpli} works, we choose positive numbers $H_1,H_2$ appearing in the lower bound for $\lambda$:
\begin{equation} \label{lambda1}
    \lambda > H_1 \left(\mean{Q_{2r}^\lambda(x_0,t_0)} (|Du|+s)^{p-1} \, dx \,dt\right)^{1/(p-1)} + H_2 \int_{0}^{2r} \frac{|\mu|(Q_\varrho^\lambda(x_0,t_0))}{\varrho^{N-1}} \frac{d\varrho}{\varrho}\,,
\end{equation}
where $r \in (0,R_0/2)$ and $R_0$ is suitably small, to be determined in due course of the proof. The statement of Theorem \ref{main1} will be then proved with $c := 2\max\{H_1,H_2\}$.  
%%%TUOMO : %%%ROS no any
Without loss of generality, we may assume that $\lambda$ is finite, since otherwise there is nothing to prove. 
In the rest of the proof certain constants will be {\em deliberately} chosen smaller/larger than necessary to emphasize the fact that their role is ``to be very small/large". To begin with, having in mind to apply Theorems \ref{thm:Dw(j) continuous} and \ref{sublime}, we set
\begin{equation}\label{A,B choices}
    A := 1000^{4pN} \max\{c_2,c_3,200\}\,, \quad B := 10^5\,, \quad \delta := 10^{-5} \,, \quad \bar \eps := 4^{-(N+4)}\,,
\end{equation}
where $c_2\equiv c_2(n,p,\nu,L)$ and $c_3\equiv c_3(n,p,\nu,L)$ are as in Corollary~\ref{cor:Dw(j) bounded 2} and Theorem~\ref{thm:Dv(j) bounded}, respectively. In particular, $A$ depends only on $n,p,\nu,L$. Let $\sigma_1 \equiv \sigma_1(n,p,\nu,L, \omega(\cdot)) $ and $\sigma_2\equiv \sigma_2(n,p,\nu,L)$ be as in Theorems~\ref{thm:Dw(j) continuous} and \ref{sublime}, respectively, both corresponding to the choices in \rif{A,B choices}.
Set
\begin{equation}\label{sigma}
  \sigma := \min\{\sigma_1,\sigma_2,1000^{-p/N} (p-1)^{-1/N} A^{-(p-2)/N} , 16^{-Np}\} \in (0,1/4)\,.
\end{equation}
The choices made above guarantee that all fixed parameters $A,B,\bar \eps,\delta,\sigma$ depend only on $n,p,\nu,L,\omega(\cdot)$. Furthermore, let $c_4$ and $\alpha$ be as in Theorem~\ref{sublimeII}, corresponding to the choice of $A$ in \rif{A,B choices}. In this way they both depend only on $n,p,\nu,L$. Next, let $k$ be the smallest integer satisfying
\begin{equation}\label{k}
c_4 \sigma^{k \alpha} \leq \frac{\sigma^{N}}{10^6} \quad \mbox{and} \quad k \geq 2\,.
\end{equation}
As all of $c_4$, $\alpha$ and $\sigma$ depend only on $n,p,\nu,L,\omega(\cdot)$, so does $k\equiv k(n,p,\ratio, \omega(\cdot))$. We now proceed with the choice of $H_1, H_2$ and $R_0$. We set
\begin{equation}\label{H1}
H_1 := 100^{N/(p-1)} 10^6 \sigma^{-4N}\,.% actually \sigma^{-3N} suffices here
\end{equation}
Then, taking $\bar c_1\equiv \bar c_1(n,p,\nu,1/(n+1)) \equiv \bar c_1(n,p,\nu)$ as in Lemma~\ref{lemma:u-w comparison},
$\bar c_3\equiv \bar c_3(n,p,\nu,L,1/(n-1),\sigma) \equiv \bar c_3(n,p,\nu,L, \omega(\cdot))$ from Lemma~\ref{cor:comparison}, and $\bar c_7\equiv \bar c_7(n,p,\nu,L,\linebreak \omega(\cdot), \sigma)\equiv \bar c_7(n,p,\nu,L,\omega(\cdot))$ from Lemma~\ref{Du vs Dv(j)} with $A,\sigma$ as in \rif{A,B choices}-\rif{sigma}, we set
\begin{equation}\label{H2}
H_2 := \left(2^N \sigma^{-N(k+5)}10^6  A \max\{\bar c_1,\bar c_3,\bar c_7\} \right)^{[n(p-1)+p]/(n+2)} \,.
\end{equation}
Notice that since all the constants $H_1$ and $H_2$ are built on ultimately depend on $n,p,\ratio, \omega(\cdot)$, we also have $H_1, H_2 \equiv H_1, H_2(n,p,\ratio, \omega(\cdot))$. We then pass to the choice of $R_0$. Looking then at Lemmas~\ref{lemma:w-v} and \ref{Du vs Dv(j)} and taking $\bar c_5,\bar c_6$ corresponding to the choices of $A$ and $\sigma$ just made in \rif{A,B choices}-\rif{sigma}, we take $R_2\equiv R_2(n,p,\nu,L,\omega(\cdot))$ to be the largest positive number such that
\eqn{asfor}
$$
\bar c_5^{1/p} \sigma^{-N(k+6)}[\omega(R_2)]^{2/p} +  \bar c_6 \sigma^{-(2N+1)}\int_{0}^{R_2} \omega(\varrho) \, \frac{d\varrho}{\varrho} \leq \frac{1}{2^N 10^6}
$$
is satisfied. Finally, with $R_1\equiv R_1(n,p,\nu,L,\omega(\cdot))$ being as in Theorem \ref{thm:Dw(j) bounded 1} and Corollary \ref{cor:Dw(j) bounded 2}, we let
\begin{equation}\label{r4}
  R_0 := \min\{R_1,R_2\}/4\,.
\end{equation}
From now on, with $\lambda$ as in \rif{lambda1}, $2r \leq R_0$ and $Q_{2r}^\lambda(x_0,t_0) \subset \Omega_T$, we adopt and fix the basic set-up described at the beginning of Section \ref{comp maps}. We therefore denote $r_{j}=\sigma^jr$ for $j \geq 0$, $Q_j\equiv Q_{r_j}^{\lambda}(x_0,t_0)$ and the comparison maps $w_j$ and $v_j$ defined in \rif{CD-local} and \rif{CD-local v}, respectively.

We finally record a few immediate consequences of the choices done so far for $H_1, H_2, R_0$. Observe that in \rif{lambda1} the latter potential term may be further estimated from below as follows (recall that $r_0=r$):
\begin{eqnarray}
&&\int_0^{2r} \frac{|\mu|(Q_\varrho^\lambda(x_0,t_0))}{ \varrho^{N-1}} \, \frac{d\varrho}{\varrho}\nonumber \\
 \nonumber && \qquad  =
\sum_{i=0}^\infty \int_{r_{i+1}}^{r_i} \frac{|\mu|(Q_\varrho^\lambda(x_0,t_0))}{ \varrho^{N-1}}  \, \frac{d\varrho}{\varrho}
+ \int_{r_0}^{2r_0} \frac{|\mu|(Q_\varrho^\lambda(x_0,t_0))}{ \varrho^{N-1}}\, \frac{d\varrho} {\varrho}
\\ \nonumber && \qquad \geq
\sum_{i=0}^\infty \frac{|\mu|(Q_{i+1})}{r_i^{N-1}}  \int_{r_{i+1}}^{r_i}  \frac{d\varrho}{\varrho}
+  \frac{|\mu|(Q_{0})}{(2r_0)^{N-1}}   \int_{r_0}^{2r_0}  \frac{d\varrho} {\varrho}
\\ && \qquad =
\sigma^{N-1} \log \left(\frac{1}{\sigma}\right)\sum_{i=0}^\infty \frac{|\mu|(Q_{i+1})}{r_{i+1}^{N-1}}
% \\ \nonumber && \qquad \qquad \qquad +
+
   \frac{ \log 2}{2^{N-1}}\left[\frac{|\mu|(Q_{0})}{r_0^{N-1}} \right]
\geq  \sigma^{N} \sum_{i=0}^\infty \frac{|\mu|(Q_{i})}{ r_{i}^{N-1}} \label{compute}
\end{eqnarray}
so that we conclude with
\begin{equation}\label{lambda2}
    \lambda > H_1 2^{-N/(p-1)} \left(\mean{Q_{0}} (|Du|+s)^{p-1} \, dx \,dt\right)^{1/(p-1)} + H_2 \sigma^{N} \sum_{i=0}^\infty \frac{|\mu|(Q_{i})}{ r_{i}^{N-1}}\,.
\end{equation}
The choice of $H_1$ in \rif{H1} guarantees for instance that
\begin{equation}\label{s}
s \leq \frac{\lambda}{600}\,.
\end{equation}
Notice that in the decomposition \rif{compute} we can always assume that
\eqn{countably}
$$|\mu|(\partial_{\rm par} Q_i)=0$$ for every $i \geq 0$, since this amounts to change the integrand of the first integral in \rif{compute} in countably many points. The choice of $H_2$ in \rif{H2} together with \rif{countably}, allows to deduce that
\begin{eqnarray}\label{trivial}
\nonumber \sum_{i=0}^\infty \frac{|\mu|(Q_{i})}{ r_{i}^{N-1}} &= & \sum_{i=0}^\infty \frac{|\mu|(\lfloor Q_i\rfloor_{\rm par})}{ r_{i}^{N-1}} \\ &\leq & \left(\frac{\sigma^{N(k+4)}}{ 2^N 10^6  A \max\{\bar c_1,\bar c_3,\bar c_7\}}\right)^{[n(p-1)+p]/(n+2)}\lambda \nonumber \\ & \leq& \frac{\sigma^{N(k+4)}}{ 2^N 10^6  A \max\{\bar c_1,\bar c_3,\bar c_7\}}\lambda   \leq  \lambda \,.
\end{eqnarray}
Notice that in the second estimate above we have used the fact that, since $p \geq 2$, then $n(p-1)+p \geq n+2$, and that the quantity in brackets is smaller than $1$.

Recall now the choice of $R_0$ in \rif{r4} and observe that, if $r\equiv r_0 \in (0, R_2/4]$, then
\begin{eqnarray} \nonumber
\int_{0}^{R_2} \omega(\varrho) \, \frac{d\varrho}{\varrho} & =  & \sum_{i=0}^{\infty} \int_{r_{i+1}}^{r_i} \omega(\varrho) \, \frac{d\varrho}{\varrho} + \int_{r_0}^{R_2} \omega(\varrho) \, \frac{d\varrho}{\varrho}\\ & \geq &\log \left(\frac{1}{\sigma}\right) \sum_{i=0}^\infty \omega(r_{i+1}) + \log 4 \omega(r_0) \geq \sigma
\sum_{i=0}^\infty \omega(r_i) \label{compute0}
\end{eqnarray}
holds, so that \rif{asfor} implies
\begin{equation}
\label{omega sum}
\bar c_5^{1/p} \sigma^{-N(k+4)}[\omega(r_0)]^{2/p} + \bar c_6 \sum_{i=0}^\infty \omega(r_i) \leq \frac{\sigma^{2N}}{2^N 10^6}\,.
\end{equation}
\begin{remark}When dealing with equations of the type in \rif{senzat}, i.e., equations without dependence on $x$, Theorem \ref{thm:Dw(j) continuous} is not any longer needed and therefore no dependence on $\omega(\cdot)$ occurs in the constants. Actually, we do not need the comparison functions $w_j$ and we can just use $v_j$. \end{remark}
\begin{remark}An ambiguity occurs when considering \rif{countably}, since \rif{countably} is automatically satisfied when $\mu \in L^1$, which is precisely the case when proving Theorem \ref{main1}. Here we nevertheless wanted to emphasize the fact that we can assume \rif{countably} at this stage also when $\mu$ is genuinely a measure. This fact will play a role when proving Theorem \ref{solath} below. \end{remark}
\subsection*{Step 2: Exit time argument.}
After having fixed the relevant constants in the previous step, we consider, for indexes $i \geq 1$, the quantities
%\begin{equation}\label{Ci}
$$
    C_i := \sum_{m=-1}^{0} \left(\mean{Q_{i+m}} |Du|^{p-1} \, dx \, dt \right)^{1/(p-1)} + 2\sigma^{-N} \mean{Q_{i}} |Du-(Du)_{Q_i}| \, dx \, dt \,.
$$%\end{equation}
By~\eqref{H1} and \eqref{lambda2} it follows that $$C_1\leq 6 \sigma^{-N-\frac{N}{p-1}}\left(\mean{Q_{0}} |Du|^{p-1} \, dx \, dt \right)^{1/(p-1)} \leq \frac{\lambda}{1000}\,.$$ Furthermore, without loss of generality, we may assume that there is an exit time index $i_e\geq 1$ such that $C_i >\lambda/1000$ whenever $i>i_e$ and $C_{i_e} \leq \lambda/1000$. Indeed, if such an index does not exist, we have -- in view of $C_1 \leq \lambda/1000$ -- a subsequence $(i_j)_j$ of indexes such that $C_{i_j} \leq \lambda/1000$ as $j\to \infty$. But, as we assume that $(x_0,t_0)$ is a Lebesgue point of $Du$, then also
\eqn{lpo}
$$
|Du(x_0,t_0)| = \lim_{j\to \infty} |(Du)_{Q_{i_j}}| \leq \limsup_{j\to \infty}\, C_{i_j} \leq \frac{\lambda}{1000}
$$
would hold and the proof would be complete. Thus, from now on, we shall work under the assumption
\begin{equation}\label{exit time}
   C_i > \frac{\lambda}{1000} \quad \textrm{for} \ \ i \in \{ i_e+1,i_e+2,\ldots\}\,, \quad \textrm{and} \quad C_{i_e} \leq \frac{\lambda}{1000}\,.
\end{equation}
Note that in \rif{lpo} we are using a limit computed on a sequence of shrinking intrinsic cylinders, therefore different from those considered in \rif{lp0}. On the other hand, define the set $\mathcal{L}_\lambda$ as
%\eqn{lebe}
$$
\mathcal{L}_\lambda := \left\{(x_0,t_0) \in \Omega_T\ : \ \lim_{\varrho \to 0}\mean{Q_\varrho^\lambda(x_0,t_0)} Du \, dx \, dt = Du(x_0,t_0)\right\}
$$
for $\lambda > 0$. Basic properties of maximal operators - see for instance \cite[Chapter 1, Page 8]{Steinbig} - imply that this set is actually independent of $\lambda$ and, in particular, $\mathcal{L}_\lambda = \mathcal{L}_1 =: \mathcal{L}$ for all $0<\lambda<\infty$ and therefore \rif{lpo} is completely justified in view of the assumed property in \rif{lp0}.
\subsection*{Step 3: Induction scheme.} In order to prove \rif{unoimpli} using~\eqref{exit time}, we apply an induction argument. To shorten the notation, we set, for $i \geq 0$,
\eqn{derive}
$$
E_i := \mean{Q_{i}} |Du-(Du)_{Q_i}| \, dx \, dt \,, \qquad a_i :=  |(Du)_{Q_i}|\,.
$$
We shall consider, in our iterative setting, the following conditions:
\begin{equation}
%\label{induction Du}
\nonumber
\textrm{Ind}_1(j): \qquad
\left(\mean{Q_{j-1}} |Du|^{p-1} \, dx \, dt\right)^{1/(p-1)}+\left(\mean{Q_{j}} |Du|^{p-1} \, dx \, dt   \right)^{1/(p-1)} \leq \lambda
\end{equation}
%%%TUOMO : every -> given
for given integer $j\geq i_e$ and
\begin{equation}
\nonumber
\textrm{Ind}_2(j): \qquad  \sum_{i=i_e+1}^{j} E_i \leq \frac12 \sum_{i=i_e}^{j-1} E_i + \frac{2\bar c_6}{\sigma^N} \sum_{i=i_e}^{j-1} \omega(r_i)   \,\lambda+ \frac{2\bar c_7}{\sigma^N} \sum_{i=i_e-1}^{j-2} \frac{|\mu|(Q_i)}{r_i^{N-1}}
\end{equation}
for given integer $j > i_e$. The constants $\bar c_6, \bar c_7$ are those defined in Lemma \ref{Du vs Dv(j)} with the choices of $A$ and $\sigma$ made in \rif{A,B choices} and \rif{sigma}, respectively; in this way, they ultimately depend on $n,p,\ratio, \omega(\cdot)$.

The goal is now to show, by induction, that $\textrm{Ind}_1(j)$ holds for every integer $j \geq i_e$ and $\textrm{Ind}_2(j)$ holds for every integer $j > i_e$. Indeed, this will immediately prove Theorem \ref{main1} as
\[
|Du(x_0,t_0)| = \lim_{j\to \infty} a_j \leq \limsup_{j\to \infty} \left(\mean{Q_{j}} |Du|^{p-1} \, dx \, dt \right)^{1/(p-1)} \leq \lambda\,.
\]
Now let us remark that $\textrm{Ind}_1(i_e)$ is automatically satisfied since
\eqn{est}
$$
\sum_{m=-1}^0 \left(\mean{Q_{i_e+m}} |Du|^{p-1} \, dx \, dt \right)^{1/(p-1)}\leq C_{i_e} \leq \frac{\lambda}{1000}\,.
$$
Therefore, the rest of the proof develops according to the following scheme:
\eqn{gra1}
$$
\textrm{Ind}_1(i_e) \quad   \Longrightarrow \quad \textrm{Ind}_2(i_e+1)\,,
$$
\eqn{gra2}
$$\left\{\begin{array}{c}
\textrm{Ind}_1(j)\\ [5 pt]
\textrm{Ind}_2(j)
\end{array}\right. \quad  \Longrightarrow  \quad \textrm{Ind}_2(j+1) \qquad \forall \  j > i_e
$$
and
\eqn{gra3}
$$\left\{
\begin{array}{c}
\textrm{Ind}_1(j)\\ [5 pt]
\textrm{Ind}_2(j+1)
\end{array} \right. \quad   \Longrightarrow \quad \textrm{Ind}_1(j+1) \qquad \forall \ j \geq i_e\,.
$$
We remark that in the following, unless otherwise stated, whenever we are considering $\textrm{Ind}_1(j)$ we will do it in the general case $j \geq i_e$, while, when considering $\textrm{Ind}_2(j)$ we will do it for all indexes $j > i_e$.
%Indeed, set
%$$
%X:= \left\{j > i_e \, : \, \mbox{either}\ \textrm{Ind}_1(j) \ \mbox{or}\ \textrm{Ind}_2(j) \ \mbox{is false}\right\}\,.
%$$
%and let show that $X$ is empty. Notice that we already know, by \rif{gra1}, that both $\textrm{Ind}_1(i_e)$ and $\textrm{Ind}_2(i_e+1)$ are true. Now, assume by contradiction that $X$ is non-empty. It therefore admits a minimum $j_m>i_e$. The first case is when $j_m=i_e+1$; then, since $\textrm{Ind}_2(i_e+1)$ holds, it follows that $\textrm{Ind}_1(i_e+1)$ must be false. But then \rif{gra3} used with $j=i_e$ implies that $\textrm{Ind}_1(i_e+1)$ holds and this is impossible. Therefore it must be $j_m>i_e+1$. In this case observe that both $\textrm{Ind}_1(j_m-1)$ and $\textrm{Ind}_2(j_m-1)$ are true. Using \rif{gra2} with $j=j_m-1$ it follows that $\textrm{Ind}_2(j_m)$ holds; in turn, using \rif{gra3} again for $j=j_m-1$ we have that also $\textrm{Ind}_1(j_m)$ holds and therefore $j_m$ does not belong to $X$, which is a contradiction. Therefore $X$ is empty.

\subsection*{Step 4: Upper bounds implied by $\textrm{Ind}_1(j)$.} Towards the proof of the induction step, here we assume that $\textrm{Ind}_1(j)$ holds for a certain index $j \geq i_e$ and exploit a few consequences of this.
With the integer $k$ being defined as in \rif{k}, observe that Lemma~\ref{lemma:u-w comparison} and~\eqref{trivial} imply that whenever $l\in \{0,1,\ldots,k+1\}$ the following holds:
\begin{eqnarray}
\nonumber && \left(\mean{Q_{j-1+l}} |Du-Dw_{j-1}|^{p-1} \, dx \,dt \right)^{1/(p-1)}
\\ \nonumber  && \qquad \leq \sigma^{-\frac{Nl}{p-1}}\left(\mean{Q_{j-1}} |Du-Dw_{j-1}|^{p-1} \, dx \,dt \right)^{1/(p-1)}
\\ && \qquad \leq \bar c_1\sigma^{-N(k+1)}  \lambda \left[\frac1\lambda \frac{|\mu|(Q_{j-1})}{r_{j-1}^{N-1}} \right]^{(n+2)/[(p-1)n+p]} \leq \frac{ \sigma^N}{2^N 10^6}\lambda  \,.  \label{ind u-w(j-1)}
\end{eqnarray}
Similarly, the inequalities
%\begin{eqnarray}\label{ind u-w(j)again}
\begin{equation}\label{ind u-w(j)again}
\left(\mean{Q_{j+l}} |Du-Dw_{j}|^{p-1} \, dx \,dt \right)^{1/(p-1)} \leq\frac{\sigma^N }{2^N 10^6} \lambda
\end{equation}
%\end{eqnarray}
hold as well for $l\in \{0,1,\ldots,k+1\}$.
Using~\eqref{ind u-w(j-1)} with $l=0$ and $\textrm{Ind}_1(j)$ we get
\begin{eqnarray*}\nonumber
    && \left(\mean{Q_{j-1}} |Dw_{j-1}|^{p-1} \, dx \,dt\right)^{1/(p-1)}
    \\ && \qquad  \qquad  \leq \frac{ \sigma^N}{2^N10^6} \lambda +  \left(\mean{Q_{j-1}} |Du|^{p-1} \, dx \,dt\right)^{1/(p-1)} \leq 2\lambda \,.%\label{propro}\,.
\end{eqnarray*}
By Corollary~\ref{cor:Dw(j) bounded 2} (applied to $w_{j-1}$) we then obtain, also recalling~\eqref{s},  that
\begin{equation}\label{Dw(j-1) bound}
    \sup_{\frac12 Q_{j-1}} |Dw_{j-1}|  + s \leq c_2(\lambda+s) + c_2 \lambda^{2-p} \mean{Q_{j-1}} (|Dw_{j-1}|+s)^{p-1} \, dx \,dt \leq A\lambda\,.
\end{equation}
In a completely similar way, by using this time \rif{ind u-w(j)again} instead of \rif{ind u-w(j-1)}, and again $\textrm{Ind}_1(j)$, we can prove
\eqn{simi}
$$
\left(\mean{Q_{j}} |Dw_{j}|^{p-1} \, dx \,dt\right)^{1/(p-1)} \leq 2\lambda
$$
and then
\begin{equation}\label{ind Dw(j) 1}
 \sup_{\frac12 Q_{j}} |Dw_{j}|+ s \leq A\lambda  \,.
\end{equation}
Furthermore, keeping~\eqref{omega sum} in mind, estimate~\eqref{ind Dw(j) 1} and Lemma~\ref{lemma:w-v} allow us to deduce
\eqn{ind v(j)-w(j)}
$$\left(\mean{\frac12 Q_{j}} |Dw_j-Dv_j|^{p-1} \, dx \,dt \right)^{1/(p-1)}
\leq  \bar c_5^{1/p}\left[\omega(r_{j})\right]^{2/p} \lambda  \leq \frac{\sigma^{N(k+6)}}{2^N10^6} \lambda
$$
so that, for $l \in \{1,\ldots,k+1\}$ it holds that
$$
\left(\mean{Q_{j+l}} |Dw_j-Dv_j|^{p-1} \, dx \,dt \right)^{1/(p-1)} \leq \frac{\sigma^{2N}}{2^N10^6} \lambda\,.
$$
Combining the above estimate with~\eqref{ind u-w(j)again} gives
%\begin{equation}\label{ind u-v(j) 1}
%\left(\mean{\frac12 Q_{j}} |Du-Dv_{j}|^{p-1} \, dx \,dt \right)^{1/(p-1)}   \leq \frac{\sigma^N }{10^5} \lambda
%\end{equation}
%and
\begin{equation}\label{ind u-v(j) 2}
\left(\mean{Q_{j+l}} |Du-Dv_{j}|^{p-1} \, dx \,dt \right)^{1/(p-1)} \leq \frac{\sigma^N }{10^6} \lambda
\end{equation}
%and
%\begin{equation}\label{ind u-v(j) 2meno}
%\left(\mean{Q_{j-1+l}} |Du-Dv_{j-1}|^{p-1} \, dx \,dt \right)^{1/(p-1)} \leq \frac{\sigma^N }{10^5} \lambda
%\end{equation}
for all $l \in \{1,\ldots,k+1\}$. Next, observe that \rif{simi} and \rif{ind v(j)-w(j)} imply
\begin{eqnarray*}\nonumber
    && \left(\mean{\frac12 Q_{j}} |Dv_{j}|^{p-1} \, dx \,dt\right)^{1/(p-1)}
    \\ &&   \quad  \leq \left(\mean{\frac 12 Q_{j}} |Dw_j|^{p-1} \, dx \,dt\right)^{1/(p-1)} +  \left(\mean{\frac 12Q_{j}} |Dw_j-Dv_j|^{p-1} \, dx \,dt\right)^{1/(p-1)} \\ && \quad \leq 2^{N+1}\lambda +  \lambda \leq 4^N \lambda
\end{eqnarray*}
so that Theorem~\ref{thm:Dv(j) bounded}, \rif{A,B choices} and \rif{s} finally yield
\begin{equation}\label{ind Dv(j) bound}
     \sup_{\frac14 Q_j} |Dv_j|+s \leq c_3 (\lambda +s)+ c_3 \lambda^{2-p} \mean{\frac12Q_j} (|Dv_j|+s)^{p-1} \, dx \, dt \leq A\lambda\,.
\end{equation}
\subsection*{Step 5: Lower bounds implied by $\textrm{Ind}_1(j)$.} Here we still exploit a few consequences of assuming $\textrm{Ind}_1(j)$ for some $j \geq i_e$. In particular, we derive suitable lower bounds for $Dw_{j-1}$ and $Dv_{j}$. First, let us first show a few oscillation reduction estimates.
Since we have already established the upper bound for $Dw_{j-1}$ in~\eqref{Dw(j-1) bound}, Theorem~\ref{thm:Dw(j) continuous} applied to $w_{j-1}$ (with $\delta=10^{-5}$ as in \rif{A,B choices} and recalling also the choice made in \rif{sigma} that guarantees $\sigma \leq \sigma_1$ so that $Q_j \subset \sigma_1 Q_{j-1}$) gives
\begin{equation}\label{Dw(j-1) osc}
    \osc_{Q_{j}} Dw_{j-1} \leq \frac{\lambda}{10^5}\,.
\end{equation}
For $Dv_{j}$ we instead have~\eqref{ind Dv(j) bound} and hence Theorem~\ref{sublimeII} and~\eqref{k} imply
%\begin{equation}\label{Dv(j) osc}
$$
    % \osc_{Q_{j}} |Dv_{j-1}| \leq c_4 \sigma^{\alpha} \lambda \leq \lambda/1200\,, \qquad
    \osc_{Q_{j+k}} Dv_{j} \leq c_4 \sigma^{\alpha k} \lambda \leq \frac{\sigma^{N}}{10^6} \lambda\,.
$$%\end{equation}
Using this together with~\eqref{mediaprop} and~\eqref{ind u-v(j) 2} gives
\begin{eqnarray}
&& \nonumber 2\sigma^{-N} \mean{Q_{j+k}} |Du-(Du)_{Q_{j+k}}| \, dx \, dt
\\ && \nonumber\qquad  \leq
4\sigma^{-N} \mean{Q_{j+k}} |Dv_j-(Dv_j)_{Q_{j+k}}| \, dx \, dt+ 4\sigma^{-N}
\mean{Q_{j+k}} |Du-Dv_{j}| \, dx \, dt
\\ && \nonumber\qquad  \leq
4\sigma^{-N} \osc_{Q_{j+k}} Dv_{j} + 4\sigma^{-N}
\mean{Q_{j+k}} |Du-Dv_{j}| \, dx \, dt
  \leq \frac{\lambda}{10^5}\,.
\end{eqnarray}
But, as $C_{j+k}>\lambda/1000$ for $j \geq i_e$, we then necessarily have that
\eqn{sottosotto}
$$
    \sum_{m=-1}^{0} \left(\mean{Q_{j+m+k}} |Du|^{p-1} \, dx \, dt\right)^{1/(p-1)} \geq \frac{\lambda}{2000}\,.
$$
Next, again by using triangle inequality and \rif{ind u-w(j-1)} (for $l=k, k+1$), we have
\begin{eqnarray}
\nonumber
&& \sum_{m=-1}^{0} \left(\mean{Q_{j+m+k}} |Du|^{p-1} \, dx \, dt\right)^{1/(p-1)}  %\\ \nonumber && \qquad \qquad \leq  \sum_{m=-1}^{0}\bar c_1\sigma^{-Nk}  \lambda \left[\frac1\lambda \frac{|\mu|(Q_{j-1})}{r_{j-1}^{N-1}} \right]^{(n+2)/[(p-1)n+p]}\\ %&& \qquad \qquad \qquad \qquad \nonumber + \sum_{m=-1}^{0} \left(\mean{Q_{j+m+k}} |Dw_{j-1}|^{p-1} \, dx \, dt\right)^{1/(p-1)}
\\ \label{tri1} & & \qquad \qquad  \leq   \frac{ \lambda}{10^6}  + \sum_{m=-1}^{0} \left(\mean{Q_{j+m+k}} |Dw_{j-1}|^{p-1} \, dx \, dt\right)^{1/(p-1)}
 \end{eqnarray}
so that, as $k \geq 2$ and \rif{sottosotto} holds, we also get
\begin{eqnarray}
\nonumber 2 \sup_{Q_{j}} |Dw_{j-1}| &\geq & \sum_{m=-1}^{0} \left(\mean{Q_{j+m+k}} |Dw_{j-1}|^{p-1} \, dx \, dt\right)^{1/(p-1)}\\ &\geq & \frac{\lambda}{2000} - \frac{\lambda}{10^6} \geq \frac{3\lambda}{10^4}\,.\label{point}
 \end{eqnarray}
Arguing as for \rif{tri1}, and using~\eqref{ind u-v(j) 2}, we this time have
\begin{eqnarray*}
\nonumber
&& \sum_{m=-1}^{0} \left(\mean{Q_{j+m+k}} |Du|^{p-1} \, dx \, dt\right)^{1/(p-1)}
\\ & & \qquad \qquad  \leq   \frac{ \lambda}{10^6}  + \sum_{m=-1}^{0} \left(\mean{Q_{j+m+k}} |Dv_{j}|^{p-1} \, dx \, dt\right)^{1/(p-1)}
 \end{eqnarray*}
so that, using again \rif{sottosotto}, and recalling that $k \geq 2$, we conclude with
\eqn{point2}
$$
2 \sup_{Q_{j+1}} |Dv_{j}| \geq \sum_{m=-1}^{0}\left(\mean{Q_{j+m+k}} |Dv_j|^{p-1} \, dx \, dt\right)^{1/(p-1)}  \geq \frac{3\lambda}{10^4}\,.
$$
The inequality in display \rif{point} yields the existence of a point $(\tilde x, \tilde t) \in Q_{j}$ such that $|Dw_{j-1}(\tilde x, \tilde t)|\geq \lambda/10^4$ and therefore the oscillation control in~\eqref{Dw(j-1) osc}, recalling also~\eqref{Dw(j-1) bound} and that $Q_j \subset \frac 12 Q_{j-1}$, gives
\begin{equation}\label{Dw(j-1) upper and lower}
\frac{\lambda}{A} \leq \frac{\lambda}{10^5} \leq |Dw_{j-1}| \leq A\lambda \qquad \textrm{in} \ Q_j\,.
\end{equation}
Finally, \rif{point2} and~\eqref{ind Dv(j) bound}, and the fact that $Q_{j+1} \subset \frac14 Q_{j}$, give
\begin{equation}\label{Dv(j) upper and lower}
\frac{\lambda}{B} \equiv \frac{\lambda}{10^5} \leq \sup_{Q_{j+1}} |Dv_{j}| \leq \sup_{\frac14 Q_{j}} |Dv_{j}| + s \leq A\lambda\,.
\end{equation}
\subsection*{Step 6: Further consequences of $\textrm{Ind}_1(j)$.} We again assume $\textrm{Ind}_1(j)$ for an arbitrary chosen $j \geq i_e$. Now, on one hand~\eqref{trivial},~\eqref{ind Dw(j) 1} and \eqref{Dw(j-1) upper and lower}
allow to apply Lemma~\ref{Du vs Dv(j)} and obtain
\begin{equation}\label{decisive comp}
\mean{\frac12 Q_j} |Du-Dv_j| \, dx \, dt \leq  \bar c_6 \omega(r_j) \lambda + \bar c_7 \left[\frac{|\mu|(Q_{j-1})}{r_{j-1}^{N-1}} \right]\,.
\end{equation}
On the other hand, recalling that $\sigma Q_j=Q_{j+1}$, Theorem~\ref{sublime} with $\bar \eps=4^{-(N+4)}$ is at our disposal by~\eqref{Dv(j) upper and lower} and it yields
\eqn{decisive comp2}
$$
\mean{Q_{j+1}} |Dv_j-(Dv_{j})_{Q_{j+1}}| \, dx \, dt \leq 4^{-(N+4)} \mean{\frac14 Q_{j}} |Dv_j-(Dv_{j})_{\frac14 Q_{j}}| \, dx \, dt\,.
$$
Now, by~\eqref{decisive comp}, recalling the definitions in \rif{derive} and using \rif{mediaprop} repeatedly, both
\begin{eqnarray}
\nonumber
&& \mean{\frac14 Q_{j}} |Dv_j-(Dv_{j})_{\frac 14 Q_{j}}| \, dx \, dt
\\ \nonumber &&\qquad \leq  2^{2N+1} \mean{Q_{j}} |Du-(Du)_{Q_{j}}| \, dx \, dt + 2^{N+2}\mean{\frac12 Q_{j}} |Du-Dv_{j}| \, dx \, dt
\\ \nonumber &&\qquad \leq 2^{2N+1} E_j + 2^{N+2} \bar c_6 \omega(r_j) \lambda + 2^{N+2}\bar c_7 \left[\frac{|\mu|(Q_{j-1})}{r_{j-1}^{N-1}} \right]
\end{eqnarray}
and
\begin{eqnarray}
\nonumber
&&  2\mean{Q_{j+1}} |Dv_j-(Dv_{j})_{Q_{j+1}}| \, dx \, dt
\\ \nonumber &&\qquad \geq \mean{Q_{j+1}} |Du-(Du)_{Q_{j+1}}| \, dx \, dt - \sigma^{-N} \mean{\frac 12 Q_{j}} |Du-Dv_{j}| \, dx \, dt
\\ \nonumber &&\qquad \geq E_{j+1} - \sigma^{-N} \bar c_6 \omega(r_j) \lambda -\sigma^{-N} \bar c_7 \left[\frac{|\mu|(Q_{j-1})}{r_{j-1}^{N-1}} \right]
\end{eqnarray}
hold. Combining the inequalities in the last three displays gives
\eqn{summa}
$$
E_{j+1} \leq \frac12 E_j + \frac{2\bar c_6}{\sigma^N} \omega(r_j) \lambda + \frac{2 \bar c_7}{\sigma^N} \left[\frac{|\mu|(Q_{j-1})}{r_{j-1}^{N-1}} \right]\,.
$$

\subsection*{Step 7: Verification of $\textrm{Ind}_2(i_e+1)$ and $\textrm{Ind}_2(j+1)$.} Here we prove \rif{gra1} and \rif{gra2}. The outcome of Step 6 is that \rif{summa} holds whenever $\textrm{Ind}_1(j)$ holds, for every $j \geq i_e$, therefore, since $\textrm{Ind}_1(i_e)$ holds as established in \rif{est}, taking $j= i_e$ in \rif{summa} yields $\textrm{Ind}_2(i_e+
1)$. We now come to the proof of \rif{gra2}, that is the validity of $\textrm{Ind}_2(j+1)$; this simply follows by summing \rif{summa}, that holds  by the assumed validity of $\textrm{Ind}_1(j)$, to the inequality yielded by the definition of $\textrm{Ind}_2(j)$, which is also assumed in \rif{gra2}.

\subsection*{Step 8: Bounds for $a_j$ and $E_j$.} It remains to prove \rif{gra3}. For this we now assume that $\textrm{Ind}_1(j)$ and $\textrm{Ind}_2(j+1)$ hold for some $j \geq i_e$ and derive bounds for the quantities defined in \rif{derive}. By $\textrm{Ind}_2(j+1)$ and easy manipulations, we note that~\eqref{trivial},~\eqref{omega sum},
and $C_{i_e} \leq \lambda/1000$ imply
\begin{eqnarray}\nonumber
    \sum_{i=i_e}^{j+1} E_i &\leq & 2E_{i_e} + \frac{4\bar c_6}{\sigma^N} \sum_{i=0}^{\infty} \omega(r_i) \, \lambda + \frac{4\bar c_7}{\sigma^N} \sum_{i=0}^{\infty} \frac{|\mu|(Q_i)}{r_i^{N-1}} \\
    &\leq & 2E_{i_e} + \frac{ \sigma^{N} \lambda}{1000} \leq \sigma^{N}C_{i_e} + \frac{ \sigma^{N} \lambda}{1000}\leq \frac{ \sigma^{N}\lambda }{500}  \,. \label{E sum}
\end{eqnarray}
%It trivially follows from this that
%\begin{equation}\label{E(j)}
%E_j \leq \lambda \sigma^{N}/1000 \, .% \leq 4^{-(p-1)} A^{-(p-2)} \lambda\,.
%\end{equation}
Using this together with the obvious estimation (valid whenever $i \geq 0$)
\begin{eqnarray*}
a_{i+1} - a_i &\leq & \mean{Q_{i+1}} |Du-(Du)_{Q_i}| \, dx \, dt \\
&\leq &  \frac{|Q_i|}{|Q_{i+1}|}\mean{Q_{i}} |Du-(Du)_{Q_i}| \, dx \, dt
= \sigma^{-N} E_i
\end{eqnarray*}
we get, after telescoping the previous inequalities and using \rif{E sum}, that
\begin{equation}\label{a bound}
    a_{l+1} \leq a_{i_e} + \sigma^{-N} \sum_{i=i_e}^l E_i \leq \frac{\lambda}{1000} + \frac{\lambda}{500}  \leq \frac{\lambda}{200}
\end{equation}
for all $l \in \{i_e,\ldots,j\}$. Here we also used that, thanks to \rif{exit time}, we have
\eqn{E sum2}
$$
a_{i_e} \leq \left(\mean{Q_{i_e}} |Du|^{p-1} \, dx \, dt \right)^{1/(p-1)} \leq C_{i_e} \leq \frac{\lambda}{1000}\,.
$$

\subsection*{Step 9: Verification of $\textrm{Ind}_1(j+1)$.} Here we prove \rif{gra3},
thereby concluding the proof. We actually have to prove that
\eqn{vali}
$$
\sum_{m=-1}^{0}\left(\mean{Q_{j+m+1}} |Du|^{p-1} \, dx \, dt \right)^{1/(p-1)} \leq \lambda\,.
$$
To this end, we estimate using~\eqref{ind u-w(j-1)} (with $l=1,2$) as follows:
\begin{eqnarray}
\nonumber
&& \sum_{m=-1}^{0}\left(\mean{Q_{j+m+1}} |Du|^{p-1} \, dx \, dt \right)^{1/(p-1)}  \\ &&  \nonumber \qquad \leq   \sum_{m=-1}^{0}   \left(\mean{Q_{j+m+1}} |Du-Dw_{j-1}|^{p-1} \, dx \, dt\right)^{1/(p-1)}
\\ \nonumber && \qquad \qquad +\sum_{m=-1}^{0} \left(\mean{Q_{j+m+1}} |Dw_{j-1}|^{p-1} \, dx \, dt\right)^{1/(p-1)}
\\ \label{p-1 -> 1} && \qquad \leq   \frac{ \sigma^N}{2^N10^5} \lambda + \sum_{m=-1}^{0}\left(\mean{Q_{j+m+1}} |Dw_{j-1}|^{p-1} \, dx \, dt\right)^{1/(p-1)}\,.
 \end{eqnarray}
We further estimate the latter term on the right hand side by simply applying triangle inequality as follows:
\begin{eqnarray*}
&& \sum_{m=-1}^{0} \mean{Q_{j+m+1}} |Dw_{j-1}|^{p-1} \, dx \, dt \\ &&  \leq  \sum_{m=-1}^{0} \mean{Q_{j+m+1}} |Dw_{j-1}|^{p-2} |(Du)_{Q_{j+m+1}}| \, dx \, dt
\\ &&   \quad +\sum_{m=-1}^{0} \mean{Q_{j+m+1}} |Dw_{j-1}|^{p-2} \left(|Du-(Du)_{Q_{j+m+1}}| + |Du-Dw_{j-1}| \right) \, dx \, dt \,.
 \end{eqnarray*}
Using Young's inequality with conjugate exponents $(p-1)/(p-2)$ and $p-1$ (only when $p>2$) we get
\begin{eqnarray*}
&&
\sum_{m=-1}^{0} \mean{Q_{j+m+1}} |Dw_{j-1}|^{p-2} |(Du)_{Q_{j+m+1}}| \, dx \, dt \\
&& \qquad \qquad \leq \frac{p-2}{p-1}\sum_{m=-1}^{0} \mean{Q_{j+m+1}} |Dw_{j-1}|^{p-1} \, dx \, dt \\ &&\qquad \qquad \qquad \qquad + \frac{1}{p-1}\sum_{m=-1}^{0} |(Du)_{Q_{j+m+1}}|^{p-1}\,.
 \end{eqnarray*}
Matching the inequalities in the last two displays, reabsorbing terms, and using~\eqref{Dw(j-1) bound}, yields
\begin{eqnarray*}
&&  \sum_{m=-1}^{0} \mean{Q_{j+m+1}} |Dw_{j-1}|^{p-1} \, dx \, dt  \leq \sum_{m=-1}^{0} |(Du)_{Q_{j+m+1}}|^{p-1}
\\ & &   +
 (p-1) (A \lambda)^{p-2}  \sum_{m=-1}^{0} \mean{Q_{j+m+1}}\left(|Du-(Du)_{Q_{j+m+1}}| + |Du-Dw_{j-1}| \right) \, dx \, dt \,.
 \end{eqnarray*}
Notice that we have used that $Q_j \subset \frac12 Q_{j-1}$ in order to apply \rif{Dw(j-1) bound}. Now we estimate all the terms in the right hand side of the above inequality. Using~\eqref{a bound} and~\eqref{E sum2} (this last one only when $j=i_e$) from Step 7 we have
$$
 \sum_{m=-1}^{0} |(Du)_{Q_{j+m+1}}|^{p-1} = a_{j+1}^{p-1}+a_{j}^{p-1} \leq  2\left( \frac{\lambda}{200}\right)^{p-1}\,,
$$
while using~\eqref{E sum} gives
$$
 \sum_{m=-1}^{0} \mean{Q_{j+m+1}}|Du-(Du)_{Q_{j+m+1}}|  \, dx \, dt = E_{j+1}+E_{j} \leq \frac{ \sigma^{N}\lambda }{500}\,.
$$
Finally, using~\eqref{ind u-w(j-1)} (with $l=1,2$) and H\"older's inequality yields
$$
 \sum_{m=-1}^{0} \mean{Q_{j+m+1}} |Du-Dw_{j-1}|  \, dx \, dt \leq  \frac{ \sigma^N}{2^N 10^5}\lambda\,.
$$
Connecting the inequalities in the last four displays, and recalling the very definitions of $A$ and $\sigma$ in~\rif{A,B choices} and \eqref{sigma} respectively, gives us
$$
 \sum_{m=-1}^{0} \mean{Q_{j+m+1}}|Dw_{j-1}|^{p-1} \, dx \, dt %&\leq & 2\left( \frac{\lambda}{200}\right)^{p-1} +(p-1) (A \lambda)^{p-2} \sigma^N \lambda \\ &\leq  &
 \leq \left( \frac{\lambda}{8}\right)^{p-1}.
$$
Inserting the last inequality into~\eqref{p-1 -> 1} %and using the elementary inequalities
%$$
%\left[a^{1/(p-1)}+b^{1/(p-1)}\right]\leq 2^{(p-2)/(p-1)}(a+b)^{1/(p-1)}\leq 2(a+b)^{1/(p-1)}
%$$
%for $a,b\geq 0$,
leads to \rif{vali}. Therefore \rif{gra3} is verified and the proof of Theorem \ref{main1} is complete.
\subsection{Proof of Theorem \ref{main3}} With the standard cylinder $Q_{r}\equiv Q_r(x_0,t_0) \subset \Omega_T$ being fixed in the statement,
let us consider the function
$
h(\lambda) := \lambda - cA(\lambda)
$
where
\begin{eqnarray*}
A(\lambda)&:=&\lambda^{\frac{p-2}{p-1}}\left(\frac{1}{|Q_r|}\int_{Q_r^\lambda} (|Du|+s+1)^{p-1} \, dx \, dt\right)^{1/(p-1)}  +{\bf I}_{1, \lambda}^\mu(x_0,t_0;r)
\\&:=&\left(\mean{Q_r^\lambda} (|Du|+s+1)^{p-1} \, dx \, dt\right)^{1/(p-1)}  +{\bf I}_{1, \lambda}^\mu(x_0,t_0;r)
\end{eqnarray*}
and $c>1$ is again the constant appearing in Theorem \ref{main1}; it depends only on $n,p,\ratio, \omega(\cdot)$. Here it is $Q_r^\lambda \equiv  Q_r^\lambda(x_0,t_0)$. We are actually considering the function $h(\cdot)$ to be defined for all those positive $\lambda$ which are such that $Q_r^\lambda \subset \Omega_T$; observe that since $p \geq 2$ then the domain of definition of $h(\cdot)$ includes $[1, \infty)$ as $Q_r^\lambda\subset Q_{r}\subset \Omega_T$ when $\lambda \geq 1$.
Moreover, observe that again when $\lambda \geq 1$ we have
\eqn{hA}
$$
A(\lambda)\leq \lambda^{\frac{p-2}{p-1}}\left(\mean{Q_r} (|Du|+s+1)^{p-1} \, dx \, dt\right)^{1/(p-1)}  +{\bf I}_{1}^\mu(x_0,t_0;r)\,.
%\\&:=&\left(\mean{Q_r^\lambda} (|Du|+s+1)^{p-1} \, dx \, dt\right)^{1/(p-1)}  +{\bf I}_{1, \lambda}^\mu(x_0,t_0;r)
$$
The function $h(\cdot)$ is obviously continuous and moreover $h(1)<0$ since $c > 1$ and $A(\lambda)\geq 1$. On the other hand, \rif{hA} implies that $ h(\lambda) \to \infty$ as $\lambda \to \infty$. It follows that there exists a number $\lambda> 1$ such that $h(\lambda)=0$ and therefore $\lambda = cA(\lambda)$. In particular, $\lambda$ satisfies \rif{unoimpli}.
Therefore we can apply Theorem \ref{main1} that gives
$$
\lambda + |Du(x_0,t_0)| \leq 2\lambda = 2cA(\lambda)\,.
$$
Using in turn Young's inequality with conjugate exponents $(p-1, (p-1)/(p-2))$ when $p > 2$ and \rif{hA}, we have
$$
2cA(\lambda) \leq   \frac{\lambda}{2} + \tilde c\mean{Q_r} (|Du|+s+1)^{p-1} \, dx \, dt+2c{\bf I}_{1}^\mu(x_0,t_0;r)
$$
where $\tilde c$ depends only on $n,p,\ratio, \omega(\cdot)$. The proof follows connecting the inequalities in the last two displays.
\section{Proof of Theorems \ref{mainc1} and \ref{mainv1}}
\subsection{Proof of Theorem \ref{mainv1}} The proof consists of several steps; some of the arguments of the proof of Theorem \ref{main1} will be re-proposed in order to, this time, control the degeneracy rate of the equation.

\subsection*{Step 1: Basic setup and smallness conditions.} Since we are assuming that ${\bf I}_{1}^\mu(x_0,t_0;r)$ is locally bounded for some $r>0$ then by Theorem \ref{main3} we have that $Du$ is locally bounded in $\Omega_T$, too. Moreover, since we are proving a local statement, up to passing to open subsets compactly contained in $\Omega_T$, we can assume w.l.o.g.~that the gradient is globally bounded, therefore letting
\eqn{il_lambda}
$$
\lambda:= \|Du\|_{L^\infty(\Omega_T)}+s +1< \infty\,.
$$
From now on our analysis will proceed on cylinders of the type $Q_{r}^{\lambda} \subset  \Omega_T$. We shall prove that for every $\ep \in (0,1)$, there exists a radius $r_\ep \equiv r_\ep(n,p,\ratio, \omega(\cdot), \mu(\cdot), \eps)>0$ such that \eqn{eq: VMO goal2}
$$
E(Du, Q_\varrho^{\lambda}) =\mean{Q_\varrho^{\lambda}}|Du-(Du)_{Q_\varrho^{\lambda}}| \, dx\, dt  < \lambda\ep$$ holds whenever $\varrho \in (0,r_\ep]$ and $Q_\varrho^{\lambda} \subset  \Omega_T$. Once this fact is proved the VMO-regularity of $Du$ in the sense of Theorem \ref{mainv1} follows by an easy change-of-variables argument as $\lambda$ is now fixed in \rif{il_lambda}.

Towards the application of Theorems \ref{thm:Dw(j) continuous}-\ref{sublime}, and with $c_2\equiv c_2(n,p,\nu,L)$ and $c_3\equiv c_3(n,p,\nu,L)$ being as in Corollary~\ref{cor:Dw(j) bounded 2} and Theorem~\ref{thm:Dv(j) bounded}, respectively, we start fixing \begin{equation}\label{A,B choices-c}
    A := \frac{1000^{4pN} \max\{c_2,c_3,200\}}{\eps}\,, \quad B := \frac{10^5}{\eps}\,, \quad \delta := \frac{\eps}{10^5}\,, \quad \bar \eps := \frac{\eps}{10^{5N}} \,.
\end{equation}
With the choice in \rif{A,B choices-c} we determine $\sigma_1 \equiv \sigma_1(n,p,\nu,L, \omega(\cdot), \eps) $ and $\sigma_2\equiv \sigma_2(n,p,\linebreak \nu,L, \eps)$ from Theorems~\ref{thm:Dw(j) continuous} and \ref{sublime}, respectively. Next, we fix \begin{equation}\label{sigmac}
  \sigma := \min\{\sigma_1,\sigma_2, 16^{-N}\} \in (0,1/4)
\end{equation}
and notice that $\sigma$ only depends on $n,p,\ratio, \omega(\cdot),\ep$. Such choices, looking at Lemmas \ref{lemma:u-w comparison}, \ref{lemma:w-v} and \ref{Du vs Dv(j)}, determine the constants $\bar c_1,  \bar c_5,\bar c_6$ and $\bar c_7$ as depending only on $n,p,\ratio, \omega(\cdot),\ep$. We again need some limitations on the size of the radii considered; here $R_1\equiv R_1(n,p,\ratio, \omega(\cdot))$ still denotes the radius considered in Theorem \ref{thm:Dw(j) bounded 1} and Corollary \ref{cor:Dw(j) bounded 2}. Moreover, we select a new radius $R_3\equiv R_3(n,p,\ratio, \omega(\cdot),  \mu(\cdot), \eps)$ in such a way that the following {\em smallness conditions} hold:
\eqn{pigpic}
$$
\sup_{0<\varrho\leq R_3}\,
 \sup_{(x, t)  \in \Omega_T} %\frac{|\mu|(Q_{\varrho}^\lambda(x, t))}{ \lambda\varrho^{N-1}}  +
 \left[\frac{|\mu|(Q_{\varrho}^\lambda(x,t))}{ \lambda\varrho^{N-1}}  \right]^{(n+2)/[(p-1)n+p]}\leq \frac{\sigma^{3N} \ep}{10^6 \bar c_1  \bar c_5\bar c_6 \bar c_7}
 $$
 and
 \eqn{pigpicw}
$$ [\omega(R_3)]^{2/p} \leq \frac{\sigma^{3N} \ep}{10^6 \bar c_1  \bar c_5\bar c_6 \bar c_7}$$
%\end{eqnarray}
 and we this time set $\bar R_0:= \min\{R_1, R_3\}/4$; as a consequence we have $\bar R_0\equiv \bar R_0 (n,p,\ratio, \omega(\cdot), \mu(\cdot), \eps)$. Notice only that the possibility of gaining \rif{pigpic} follows directly by the assumption \rif{assvmo}, and from the fact that, since $\lambda \geq 1$ and $p \geq 2$, then $Q_{\varrho}^\lambda(x,t) \subset Q_{\varrho}(x,t)$.
We now fix a cylinder $Q \equiv Q_{r}^{\lambda}(x_0,t_0) \subset \Omega_T$ with $r \in (\sigma \bar R_0,\bar R_0]$, and accordingly to the setup defined in Section \ref{comp maps} define the chain of shrinking parabolic intrinsic cylinders as follows:
\eqn{sceltaitera}
$$
Q_j \equiv Q_{r_j}^{\lambda}(x_0,t_0)\,, \qquad r_j = \sigma^j r, \qquad \mbox{where}\ \ r \in (\sigma \bar R_0,\bar R_0]\,,
$$
for every integer $j \geq 0$. The related comparison solutions $w_j$ and $v_j$ are accordingly defined as in \rif{CD-local} and \rif{CD-local v}, respectively; finally, we denote
$$
E_j := \mean{Q_{j}} |Du-(Du)_{Q_j}| \, dx \, dt\,.
$$
We shall preliminary prove that
\eqn{eq:excess conclusion 1}
$$
E_{j+1} < \lambda\ep \qquad \qquad \forall \, j \in \mathbb{N} \cap[1,\infty)\,.
$$
Now, it is obvious that if for a given $j \geq 1$ it occurs that
\eqn{primo}
$$
\left(\mean{Q_{j+1}} |Du|^{p-1} \, dx \, dt \right)^{1/(p-1)} < \frac{\lambda\ep}{50} \, ,
$$
then \rif{eq:excess conclusion 1} holds, therefore we can confine ourselves to assume that \rif{primo} does not hold. To prove \rif{eq:excess conclusion 1} in this last case we start proving the following implication, which is valid whenever $j \geq 1$:
\begin{eqnarray}
&& \nonumber \left( \mean{Q_{j+1}} |Du|^{p-1} \, dx \, dt \right)^{1/(p-1)}  \geq \frac{\lambda\ep}{50} \\ %%\eqn{eq:excess decay 1000}
&& \qquad \qquad \Longrightarrow  E_{j+1} \leq \frac{\eps}{300} E_j + \frac{2\bar c_6}{\sigma^{N}}  \omega(r_j) \lambda + \frac{2\bar c_7}{\sigma^{N}} \left[\frac{|\mu|(Q_{j-1})}{r_{j-1}^{N-1}} \right]\,. \label{decaydecay}
\end{eqnarray}
%%%TUOMO : check one more time below %%%CHECK %%%ROS simplified language below
Indeed, as in any case we have $E_{j}\leq 2 \lambda$ and \rif{pigpic}-\rif{pigpicw} hold, it follows $E_{j+1}\leq \eps\lambda/50$ as a direct consequence of \rif{decaydecay} when \rif{primo} does not hold. Notice that we have used that $(n+2)/[(p-1)n+p]\leq 1$, which holds since we are considering the case $p\geq 2$. Therefore to prove \rif{eq:excess conclusion 1} we are reduced to check the validity of \rif{decaydecay}.

\subsection*{Step 2: Proof of \rif{decaydecay}.} We note that the setting of the shrinking cylinders $\{Q_j\}$ and related comparison maps $v_j, w_j$ defined in \rif{sceltaitera} is completely similar to the one adopted in the proof of Theorem \ref{main1}, only the choice of the constants differs. To prove \rif{decaydecay} is in turn sufficient to prove the following group of inequalities for the choices made in \rif{A,B choices-c}:
\eqn{ww2}
$$
\sup_{\frac 12Q_{j}} |Dw_{j}|+s \leq A\lambda
$$
\eqn{ww1}
$$
\frac{\lambda}{A}\leq  |Dw_{j-1}| \leq |Dw_{j-1}| + s \leq A	\lambda\qquad \mbox{in} \ Q_{j}
$$
\eqn{ww3}
$$
 \frac{\lambda}{B} \leq  \sup_{ Q_{j+1}}\, |Dv_{j}| \leq  \sup_{\frac 14 Q_{j}}\, |Dv_{j}| + s \leq A	\lambda\,.
$$
Indeed, taking these for granted, let us see how to conclude with the proof of \rif{decaydecay}.
Inequalities in displays \rif{ww2}-\rif{ww3} are completely analogous to \rif{ind Dw(j) 1} and \rif{Dw(j-1) upper and lower}-\rif{Dv(j) upper and lower}, respectively. We can therefore exactly argue as in Step 6 of the proof of Theorem \ref{main1}: using \rif{ww2}-\rif{ww1} we get \rif{decisive comp} by Lemma \ref{Du vs Dv(j)}, while \rif{ww3} allows to use Theorem~\ref{sublime} with $\bar \eps=10^{-5N} \eps $ thereby yielding
$$
\mean{Q_{j+1}} |Dv_j-(Dv_{j})_{Q_{j+1}}| \, dx \, dt \leq \frac{\eps}{10^{5N}} \mean{\frac14 Q_{j}} |Dv_j-(Dv_{j})_{\frac14 Q_{j}}| \, dx \, dt
$$
which plays the role of
\rif{decisive comp2} in this context. Proceeding as in Step 6 we finally arrive at the inequality in display \rif{decaydecay}, which is the analog of \rif{summa} in this context.

It remains to prove \rif{ww2}-\rif{ww3}. Using Lemma \ref{lemma:u-w comparison} and \rif{pigpic}, we have
\begin{eqnarray}
&&\nonumber  \ \ \left(\mean{Q_{j+1}} |Du-Dw_{j-1}|^{p-1} \, dx \,dt\right)^{1/(p-1)}   \\ & & \qquad
 \leq \sigma^{-\frac{2N}{p-1}}\left(\mean{Q_{j-1}} |Du-Dw_{j-1}|^{p-1} \, dx \,dt\right)^{1/(p-1)}\leq  \frac{\lambda\ep}{10^4}\,.
\label{keep1}\end{eqnarray}
By \rif{keep1}, keeping \rif{il_lambda} in mind, and using triangle inequality we get
$$\left(\mean{Q_{j-1}} |Dw_{j-1}|^{p-1} \, dx \,dt\right)^{1/(p-1)}\leq 2\lambda$$
so that, applying Corollary~\ref{cor:Dw(j) bounded 2} to $w_{j-1}$ we finally come to
\eqn{ww0}
$$
\sup_{Q_{j}} |Dw_{j-1}| + s \leq \sup_{\frac 12 Q_{j-1}} |Dw_{j-1}| + s \leq A	\lambda
$$
that proves the right hand side inequality in \rif{ww1}. In a completely similar way we obtain also
\rif{ww2}. We now prove the left hand side inequality in \rif{ww1}.
Again using \rif{keep1}, we get
\begin{eqnarray}
\nonumber
\sup_{Q_{j}} |Dw_{j-1}| &\geq & \left(\mean{Q_{j+1}} |Du|^{p-1} \, dx \,dt\right)^{1/(p-1)}\\ &&   \hspace{-3mm}-\left(\mean{Q_{j+1}} |Du-Dw_{j-1}|^{p-1} \, dx \,dt\right)^{1/(p-1)}\geq  \frac{ \lambda\ep}{50 } - \frac{\lambda\ep}{10^4}\geq  \frac{ \lambda\eps }{100}. \nonumber%\label{passa0}
\end{eqnarray}
The lower bound in the last display provides the existence of a point $(\tilde x, \tilde t) \in Q_{j}$ such that $|Dw_{j-1}(\tilde x, \tilde t)|\geq \lambda\eps  /200$ while
\rif{ww0} allows to apply Theorem \ref{thm:Dw(j) continuous} (to $w_{j-1}$ in $\frac 12 Q_{j-1}$) getting, thanks to the choice of $\delta$ in \rif{A,B choices-c},
$$
\osc_{Q_{j}}\, Dw_{j-1}\leq \frac{\lambda\eps }{10^5}\,.%\lambda \eps /10^5\,.
$$
The last two inequalities finally give $|Dw_{j-1}| \geq \lambda \eps /10^5$ in $Q_{j}$. Summarizing, recalling the choice of $A$ in \rif{A,B choices-c} and again \rif{ww0}, the proof \rif{ww1} turns out to be complete. It remains to prove \rif{ww3}. Using~\eqref{ww2}, Lemma~\ref{lemma:w-v} and finally \rif{pigpicw} we have
$$\left(\mean{\frac12 Q_{j}} |Dw_j-Dv_j|^{p-1} \, dx \,dt \right)^{1/(p-1)}
\leq  \bar c_5^{1/p}\left[\omega(r_{j})\right]^{2/p} \lambda  \leq \frac{\sigma^{3N}\lambda \ep}{10^6}\,.$$
Combining the above estimate with~\eqref{ce} and \rif{pigpic} gives, after a few standard manipulations,
\begin{eqnarray}
&& \nonumber \max\left\{\left(\mean{Q_{j+1}} |Du-Dv_{j}|^{p-1} \, dx \,dt \right)^{\frac{1}{p-1}},\left(\mean{\frac 12Q_{j}} |Du-Dv_{j}|^{p-1} \, dx \,dt \right)^{\frac{1}{p-1}}\right\} \\ && \qquad \qquad \leq \frac{\bar c_1}{\sigma^N} \lambda \left[\frac1\lambda \frac{|\mu|(Q_j)}{r_j^{N-1}} \right]^{(n+2)/[(p-1)n+p]}+\frac{\sigma^{2N} \lambda\ep}{10^6}\leq \frac{\sigma^{2N} \lambda\ep}{10^5}\,.\label{passa}
\end{eqnarray}
Therefore, using \rif{passa}, Minkowski's inequality and recalling the definition of $\lambda$ in \rif{il_lambda} we have
$$
\left(\mean{\frac12 Q_{j}} |Dv_{j}|^{p-1} \, dx \,dt\right)^{1/(p-1)}\leq \lambda +\frac{\sigma^{2N} \lambda\ep}{10^5}\leq 2 \lambda
$$
so that the right hand side inequality in \rif{ww3} follows by Theorem \ref{thm:Dv(j) bounded}. As for the left hand side, notice that
the first inequality in \rif{decaydecay} and \rif{passa} imply
\begin{eqnarray*}
\nonumber
 &&\sup_{Q_{j+1}} |Dv_{j}| \geq  \left(\mean{Q_{j+1}} |Du|^{p-1} \, dx \,dt\right)^{1/(p-1)}\\ &&   \qquad-\left(\mean{Q_{j+1}} |Du-Dv_{j}|^{p-1} \, dx \,dt\right)^{1/(p-1)}  \geq \frac{\lambda\ep}{50} - \frac{\sigma^{2N} \lambda\ep}{10^5}  \geq \frac{\eps\lambda}{10^5}\equiv \frac{\lambda}{B}
\end{eqnarray*}
so that \rif{ww3} is completely proved.

\subsection*{Step 3: Interpolation of radii.} With \rif{eq:excess conclusion 1} at our disposal we can finally conclude the proof of Theorem \ref{mainv1} by letting $
r_\ep := \sigma^2 \bar R_0.$ %Since all the estimates above are uniform with respect to the initial cylinder considered $Q_{r}^{\lambda}$ and especially with respect to
%{\em the initial radius} $r \in (\sigma ,\bar R_0]$ chosen to build the chain in \rif{sceltaitera}, the inequality in
%\rif{eq: VMO goal2} follows.
Indeed, consider $\varrho \leq \sigma^2\bar R_0$; this means there exists an integer $m\geq 2$ such that $\sigma^{m+1} \bar R_0< \varrho \leq \sigma^{m} \bar R_0$. Therefore we have $\varrho = \sigma^{m}r$ for some $r \in (\sigma \bar R_0, \bar R_0]$ and \rif{eq: VMO goal2} follows from \rif{eq:excess conclusion 1} with this particular choice of $r$.

\subsection{Proof of Theorem \ref{mainc1}}
The proof is now based on a combination of the arguments of Theorem \ref{mainv1} with those which are more typical of the elliptic case; we report everything in full detail for the sake of completeness and readability, and also because a certain number of modifications is really needed. We shall therefore keep the notation introduced in Step 1 of the proof of Theorem \ref{mainv1}. Essentially, we are going to use the same choices in \rif{A,B choices-c}-\rif{pigpicw} but using an additional smallness condition on the radii used; in this way we can use both the inequalities in the proof of Theorem \ref{mainv1} and the result of Theorem \ref{mainv1}. We consider a cylinder $Q_0 \Subset \Omega_T$ and prove that for every $\ep>0$ there exists a radius $r_\ep\leq d_{\rm par} (Q_0, \partial \Omega_T)/2$, depending only on $n,p,\ratio, \omega(\cdot), \mu(\cdot), \eps$, such that
\eqn{smally2}
$$|(Du)_{Q_{\varrho}^{\lambda}(x_0,t_0)}-(Du)_{Q_{\rho}^{\lambda}(x_0,t_0)}|\leq  \lambda\ep\qquad \mbox{holds for every}  \ \varrho, \rho \in (0,r_\ep]$$
whenever $(x_0,t_0) \in Q_0$. This proves that $Du$ is the local uniform limit of continuous maps - defined via the averages - and hence it is continuous. The rest of the proof goes in two steps.

\subsection*{Step 1: Dyadic sequences and continuity.} To begin with the proof of \rif{smally2} we recall that $R_1$ is determined in Theorem \ref{thm:Dw(j) bounded 1}. Moreover $R_3$ is determined in \rif{pigpic}-\rif{pigpicw} with the constant $\bar c_1,  \bar c_5,\bar c_6$ and $\bar c_7$ obtained Theorem \ref{mainv1} and corresponding to the choices made in \rif{A,B choices-c} and \rif{sigmac}. Next, we take yet another positive radius $R_4\leq d_{\rm par} (Q_0, \partial \Omega_T)/2$ such that
\eqn{unap0}
$$
\sup_{(x, t) \in Q_0} \, \int_0^{4 R_4} \frac{|\mu|(Q_{\varrho}(x, t))}{\lambda \varrho^{N-1}} \, \frac{d\varrho}{\varrho} +\int_{0}^{4R_4} \omega(\varrho) \, \frac{d\varrho}{\varrho} \leq\frac{\sigma^{4N} \ep}{10^6 \bar c_6 \bar c_7}%\frac{\sigma^{4N} \ep}{10^6 \bar c_1  \bar c_5\bar c_6 \bar c_7}
$$
and, recalling the definition in \rif{excess}
\eqn{duep}
$$
 \sup_{0<\varrho\leq R_4} \, \sup_{(x, t) \in Q_0} \, E(Du, Q_{\varrho}^{\lambda}(x, t)) \leq  \frac{ \sigma^{4N}\lambda\ep }{10^5}\,.
$$
Observe that it is possible to make the choice in \rif{duep} thanks to Theorem \ref{mainv1} and in particular to \rif{eq: VMO goal2}, that ensures that $R_4$ can be chosen in a way that makes it depending only on $n,p,\ratio, \omega(\cdot), \mu(\cdot), \eps$. Finally, this time we set $\tilde R_0:= \min\{R_1, R_3, R_4\}/2$ and take everywhere $r \leq \tilde R_0$ so that $\tilde R_0$ ultimately depends again on $n,p,\ratio, \omega(\cdot), \mu(\cdot), \eps$ only. The sequence of shrinking cylinders $\{Q_j\}$ is now defined as
$$
Q_j \equiv Q_{r_j}^{\lambda}(x_0,t_0)\,, \qquad r_j = \sigma^j \tilde R_0, \quad \mbox{for}\ \ j \geq 0\,,
$$
while $\lambda \geq 1$ is still defined as in \rif{il_lambda} and $\sigma$ in \rif{sigmac}. By \rif{unap0}, computations similar to those in \rif{compute} and \rif{compute0} then give
\eqn{unap}
$$
 \sum_{i=0}^\infty \frac{|\mu|(Q_i)}{ \lambda r_i^{N-1}  } +  \sum_{i=0}^\infty \omega(r_i)\leq\frac{\sigma^{3N} \ep}{10^6 \bar c_6\bar c_7}\,.% \frac{\sigma^{3N} \ep}{10^6 \bar c_1\bar c_5\bar c_6\bar c_7}\,.
$$
In Step 2 we will prove that
\eqn{sm1}
$$|(Du)_{Q_h}-(Du)_{Q_k}|\leq \frac{\lambda\ep}{12}\qquad \mbox{holds whenever} \ 2 \leq k \leq  h\,.$$
Here we show how to use \rif{sm1} to finish the proof and to verify \rif{smally2} with the choice $r_\ep:=\sigma^2\tilde R_0$. Indeed, let us fix $0 < \rho < \varrho \leq r_\ep$. This means that there exist two integers, $2 \leq k \leq h$, such that
$
\sigma^{k+1}\tilde R_0 < \varrho \leq  \sigma^{k}\tilde R_0$ and $\sigma^{h+1}\tilde R_0 < \rho \leq  \sigma^{h}\tilde R_0$.
Applying~\rif{duep} we get
\begin{eqnarray*}
\nonumber |(Du)_{Q_{\varrho}^{\lambda}(x_0,t_0)}- (Du)_{Q_{k+1}}|& \leq &
\mean{Q_{k+1}} |Du- (Du)_{Q_{\varrho}^{\lambda}(x_0,t_0)}|\, dx \, dt\\
\nonumber & \leq &
\frac{|Q_{\varrho}^{\lambda}(x_0, t_0)|}{|Q_{k+1}|}\mean{Q_{\varrho}^{\lambda}(x_0, t_0)} |Du- (Du)_{Q_{\varrho}^{\lambda}(x_0, t_0)}|\, dx \, dt \\ & \leq &
\sigma^{-N}E(Du,Q_{\varrho}^{\lambda}(x_0, t_0) ) \leq \frac{\lambda\ep}{10}\,,
\end{eqnarray*}
and, similarly,
$$|(Du)_{Q_{\rho}^{\lambda}(x_0, t_0)}- (Du)_{Q_{h+1}}|\leq \frac{\lambda\ep}{10}\;.$$ Using the inequalities in the last two displays together with \rif{sm1} and triangle inequality establishes~\rif{smally2} and the proof is complete, modulo the content of the next and final step.

\subsection*{Step 2: Proof of \rif{sm1}.} The preliminary observation to make is that, with the choices made here, \rif{decaydecay} holds in this setting as this ultimately relies on \rif{pigpic}-\rif{pigpicw}, that indeed are in force by the choice of $\tilde R_0$. To continue, let us consider the set $\mathcal{L}$ defined by
$$
\mathcal{L} := \left\{ j \in \mathbb{N} \ : \ \left(\mean{Q_{j}} |Du|^{p-1} \, dx \, dt\right)^{1/(p-1)}  < \frac{\lambda\ep}{50} \right\} \, ,
$$
and, accordingly, we then define the sets
$$
\mathcal C_i^ m= \{ j \in \en \, : i \leq j \leq  i+m,  \ i \in  \mathcal{L},\  j \not\in \mathcal{L} \ \mbox{if}\ j >i\}
$$ for $m \in \en$
%$$
%\mathcal C_i^\infty= \{ j \in \en \, : i \leq j <  \infty,  \ i \in \mathcal{L},  \ j \not\in \mathcal{L} \ \mbox{if}\ j >i\}
%$$
and, finally, the number
$j_e := \min\, \mathcal{L}.$ Note that it may happen that $j_e=	\infty$; in this case $\mathcal{L}$ is empty and the first inequality in \rif{decaydecay} holds for every $j \geq 1$. The idea is now to employ \rif{decaydecay} on suitable sets $\mathcal C_i^ m$ using the indexes $i$ as a sort of exit time indexes; the difference is that they can be countably many now. We can now prove \rif{sm1}, obviously assuming $k < h$. The first case we analyze is when $ k < h \leq j_e$; %If $k+1 \leq h \leq j_e$
we use \rif{decaydecay} and the definition of $j_e$ to infer that the inequality
\eqn{doneafter}
$$
E_{j+1} \leq \frac{1}{2} E_j + \frac{2 \bar c_6}{\sigma^{N}} \omega(r_j) \lambda + \frac{2 \bar c_7}{\sigma^{N}} \left[\frac{|\mu|(Q_{j-1})}{r_{j-1}^{N-1}} \right]
$$
holds for every $j \in \{k-1, \ldots, h-2\}$. Summing up the previous inequalities easily yields
$$
\sum_{i=k}^{h-1} E_i \leq E_{k-1} + \frac{4 \bar c_6}{\sigma^{N}}\sum_{j=0}^\infty \omega(r_j)\, \lambda + \frac{4 \bar c_7}{\sigma^{N}} \sum_{j=0}^\infty \frac{|\mu|(Q_{j})}{r_{j}^{N-1}} \leq \frac{\sigma^{2N}\lambda\ep}{50}
$$
where we have used \rif{duep}-\rif{unap}, therefore \rif{sm1} follows since % When $h=k+1$, the previous inequality is a direct consequence of \rif{duep}. Summarizing, when $k < h \leq j_e$, \rif{sm1} follows since
\begin{eqnarray}
|(Du)_{Q_h}-(Du)_{Q_{k}}| & \leq & \sum_{i=k}^{h-1}  |(Du)_{Q_{i+1}}-(Du)_{Q_i}|
\nonumber \\ & \nonumber \leq &
 \sum_{i=k}^{h-1}  \mean{Q_{i+1}}|Du-(Du)_{Q_i}| \, dx \, dt
\\ & \leq & \nonumber
 \sum_{i=k}^{h-1} \frac{|Q_{i}|}{|Q_{i+1}|} \mean{Q_{i}}|Du-(Du)_{Q_i}| \, dx \, dt \\ &=& \sigma^{-N}
 \sum_{i=k}^{h-1} E_i \leq \frac{\lambda\ep}{50}\,.  \label{eq:estimte sum}
\end{eqnarray}
The second case we consider is when $j_e\leq k < h$, where we prove \rif{sm1} through the inequalities
\eqn{sottodd}
$$
|(Du)_{Q_h}| \leq \frac{\lambda\ep}{25}\qquad \mbox{and}\qquad |(Du)_{Q_k}|\leq \frac{\lambda\ep}{25}\,.
$$
In \rif{sottodd}, we prove the former, the argument for the latter being the same when $k> j_e$, otherwise $|(Du)_{Q_k}|\leq \lambda\ep/25$ is trivial if $k=j_e\in \mathcal{L}$. If $h \in \mathcal{L}$, the first inequality in \rif{sottodd} follows immediately from the definition of $\mathcal{L}$. On the other hand, if $h \not \in \mathcal{L}$, then, as $h > j_e$, it is possible to consider a set $\mathcal C_{i_h}^{m_h}$ with $m_h>0$, such that $h \in \mathcal C_{i_h}^{m_h}$; notice that $h > i_h$ as $h \not \in \mathcal{L} \ni i_h$. Then \rif{decaydecay} gives that \rif{doneafter} holds whenever $j \in \{i_h, \ldots, i_h+m_h-1\}$. Summing up and performing elementary manipulations gives
$$
\sum_{i=i_h}^{i_h+m_h} E_i \leq 2E_{i_h} + \frac{4 \bar c_6}{\sigma^{N}} \sum_{j=0}^\infty \omega(r_j)\, \lambda+ \frac{4 \bar c_7}{\sigma^{N}} \sum_{j=0}^\infty \frac{|\mu|(Q_{j})}{r_{j}^{N-1}} \leq \frac{\sigma^{2N}\lambda\ep}{50}
$$
where again we have used \rif{duep}-\rif{unap}. Therefore, as in \rif{eq:estimte sum}, we have
$$
|(Du)_{Q_h}-(Du)_{Q_{i_h}}|\leq \sigma^{-N}
 \sum_{i=i_h}^{h-1} E_i \leq \sigma^{-N}
 \sum_{i=i_h}^{i_h+m_h} E_i \leq\frac{\lambda\ep}{50}
$$
and then, using that $|(Du)_{Q_{i_h}}|\leq \lambda\ep/50$ as $i_h \in \mathcal{L}$, we have
$$
|(Du)_{Q_{h}}| \leq |(Du)_{Q_{i_h}}|+ |(Du)_{Q_h}-(Du)_{Q_{i_h}}| \leq \frac{\lambda\ep}{25}
$$
that is \rif{sottodd}. The last case to consider is when $k < j_e < h$, that can be actually treated by a combination of the first two. It suffices to prove that the inequalities in display \rif{sottodd} still hold. Indeed, the first inequality in \rif{sottodd} follows exactly as in the second case. As for the second estimate in \rif{sottodd}, let us remark that, as $j_e \in \mathcal{L}$, we have that
$
|(Du)_{Q_{j_e}}| \leq \lambda\ep/50.
$
On the other hand, we can use the first case $k < h \leq j_e$ with $h=j_e$, thereby obtaining $|(Du)_{Q_{j_e}}-(Du)_{Q_k}|  \leq \lambda\ep/50$ and therefore the second inequality in \rif{sottodd} follows via triangle inequality.

\section{Proof of Theorem \ref{solath}}
The proof of Theorem \ref{solath} is a consequence of a few simple observations once Lemma \ref{lemma:u-w comparisonApp} is at disposal  and the sequence of comparison solutions $w_j$ is introduced. Let's start by Theorem \ref{main1}. Going back to Section \ref{comp maps}, after having introduced the maps $\{w_j\}$ in Lemma \ref{lemma:u-w comparisonApp} we can introduce the maps $\{v_j\}$ exactly as in \rif{CD-local v}. It is now easy to see that with this definition all the properties of the maps $\{w_j\}$ and $\{v_j\}$ described in Sections \ref{comp maps}-\ref{compi2}  and used in the proof of Theorem \ref{main1} hold, and especially Lemma \ref{lemma:ce3}-\ref{Du vs Dv(j)}. The only difference, which stems from the right hand sides in the inequalities in Lemma \ref{lemma:u-w comparisonApp}, is that the quantities $|\mu|(\lfloor Q_j\rfloor_{\rm par})$ appear instead of $|\mu|(Q_j)$; this is anyway irrelevant in the context of Theorem \ref{main1}, in view of \rif{countably} and of the first equality in \rif{compute0}. The proof now follows exactly as in the finite energy case. As a consequence, all the corollaries of Theorem \ref{main1}, starting by Theorem \ref{main3}, hold for SOLA as well. Next, when passing to Theorems \ref{mainc1} and \ref{mainv1} the proofs remain completely the same upon using Lemma \ref{lemma:u-w comparisonApp}; note that this time the appearance of $|\mu|(\lfloor Q_j\rfloor_{\rm par})$ instead of $|\mu|(Q_j)$ gives no problem at all since all the proofs are based on the use of smallness conditions as \rif{pigpic} and \rif{unap0}.

\begin{remark}\label{solaremark} Definition \ref{soladef} of SOLA is quite natural, and is motivated by the standard way of approximating measures with bounded functions in the weak-* convergence, via parabolic smoothing, that is, using mollifiers that, although acting uniformly in space, act backward in time. The main difference with the standard elliptic case is that, instead of getting that
$$\limsup_{h} \,|\mu_h|(Q)\leq |\mu|(\overline{Q})\,,$$
that is an inequality involving the full closure of $Q$, we get \rif{convergencemeasures} so that the upper part of a cylinder does not play any role in the approximation. This is the advantage that allows to pass to the limit easily in the pointwse potential estimates. Moreover, this in accordance to the fact that, when dealing with evolutionary equations, the behavior at a certain instant of the solution only depends on what happened at past times, but not on what happens at that instant. Let us briefly recall the procedure. One fixes a family of smooth mollifiers (approximation of the identity) $\{\phi_h\}$ with $\phi_h:= h^n\phi(x/h)$ with $\phi \in C^{\infty}_0(B_{1})$, $\phi \geq 0$ and $\|\phi\|_{L^1}=1$. Similarly, we consider another family of smooth mollifiers $\{\tilde \phi_h\}$, this time in one variable: $\tilde\phi_h:= h\tilde\phi(x/h)$ with $\tilde\phi \in C^{\infty}_0((-1, 1))$ and $\|\tilde\phi\|_{L^1}=1$.
We then define $$\mu_h:= \left[\phi_h(\cdot)\tilde \phi_h(\cdot+1/h)\right]*\mu \in L^\infty\,,$$ and notice that the mollification in the time variable is backward.  In such a way we obtain a weakly* convergent sequence (in the sense of measures) and also \rif{convergencemeasures}. Such a sequence can be for instance used in \cite{B} to derive the corresponding existence theorems. The point that we want to stress here is that, in order to give a more suitable definition of SOLA, it is very often not sufficient to take {\em any} approximation of the measure $\mu$ via weak* convergence (something that is for instance sufficient when deriving global estimates). Instead, a more careful way of approximating measures, tailored to both the geometry of the problem in question and the degree of fine properties of solutions one wants to derive, must be adopted.
\end{remark}

\subsection*{Acknowledgements.} The authors are supported by the ERC grant 207573 ``Vectorial Problems" and by the
Academy of Finland project ``Potential estimates and applications for nonlinear parabolic partial
differential equations". The authors also thank Paolo Baroni for remarks on a preliminary version of the paper.


\begin{thebibliography}{99}
\bibitem {AM}\name[Acerbi, E.]\et
\name[Mingione, G.]: Gradient estimates for a class of parabolic
systems. {\em Duke Math. J.} 136 (2007), 285--320.

\bibitem {Baroni}\name[Baroni, P.]: {\em New contributions to Nonlinear Calder\'on-Zygmund theory}. Ph. D. Thesis, Scuola Normale Superiore, Pisa 2012.


\bibitem {BG}
\name[Boccardo, L.]\et \name[Gallou\"et, T.]:
 Nonlinear elliptic
and parabolic equations involving measure data. {\em J.~Funct.~Anal.}
87 (1989), 149--169.

\bibitem {B} \name[Boccardo, L.]\et \name[Dall'Aglio, A.]\et \name[Gallou\"et, T.]\et\name[Orsina, L.]:
Nonlinear parabolic equations with measure data. {\em
J.~Funct.~Anal.} 147 (1997), 237--258.

\bibitem{C} \name[Campanato, S.]: Equazioni paraboliche del secondo ordine e spazi
$\mathfrak{L}\sp{2,\theta} (\Omega,\delta)$. {\em
Ann. Mat. Pura Appl. (IV)} 73 (1966), 55-102.

\bibitem {CM} \name[Cianchi, A.]\et\name[Maz'ya, V.]: Global Lipschitz regularity for a class of quasilinear equations. {\em
 Comm. PDE} 36 (2011),100--133.


\bibitem {DBn} \name[DiBenedetto, E.]: $C\sp{1+\alpha }$ local regularity of weak solutions of degenerate
elliptic equations. {\em Nonlinear Anal.} 7 (1983),  827--850.


\bibitem {D} \name[DiBenedetto, E.]: On the local behaviour of solutions of degenerate parabolic equations with measurable coefficients. {\em Ann.~Scu.~Norm.~Sup.~Pisa Cl.~Sci.~(IV)} 13 (1986), 487--535.

\bibitem {DBbook} \name[DiBenedetto, E.]: {\em Degenerate parabolic equations.} Universitext. Springer-Verlag, New
York, 1993.



\bibitem {DBF} \name[DiBenedetto, E.]\et\name[Friedman, A.]: H\"older estimates for
nonlinear degenerate parabolic systems. {\em J.~reine~ang.~Math.
(Crelles J.)} 357 (1985), 1--22.

\bibitem {DGS} \name[Driver, B. K.]\et\name[Gross, L.]\et\name[Saloff-Coste, L.]:
Holomorphic functions and subelliptic heat kernels over Lie groups. {\em
J. Eur. Math. Soc.} 11 (2009), 941--978.

\bibitem {DM}  \name[Duzaar, F.] \& \name[Mingione, G.]: Gradient estimates via non-linear potentials. {\em Amer. J. Math.} 133 (2011), 1093--1149.

\bibitem {DM2}  \name[Duzaar, F.] \& \name[Mingione, G.]: Gradient estimates via linear and nonlinear potentials. {\em J. Funct.  Anal.} 259 (2010), 2961--2998. 

\bibitem {HM} \name[Havin, M.]\et\name[Mazya, V. G.]:
A nonlinear potential theory. {\em Russ.~Math.~Surveys} 27 (1972), 71--148.


\bibitem {HW} \name[Hedberg, L.I.]\et \name[Wolff, T.]:
Thin sets in nonlinear potential theory.
{\em Ann.~Inst.~Fourier (Grenoble)} 33 (1983), 161--187.

\bibitem {maz} \name[Jin, T.]\et\name[Mazya, V.]\et\name[Van Schaftingen, J.]: Pathological solutions to elliptic problems in divergence form with continuous coefficients. {\em C.~R.~Acad.~Sci.~Paris
Ser.~I} 347 (2009), 773-778.

\bibitem {KMp} \name[Kilpel\"ainen, T.]\et\name[Mal\'y, J.]: Degenerate elliptic equations with measure data and nonlinear potentials. {\em Ann~Scu.~Norm.~Sup.~Pisa Cl.~Sci.~(V)} 19 (1992), 591--613.

\bibitem {KM} \name[Kilpel\"ainen, T.]\et\name[Mal\'y, J.]: The Wiener test and potential
estimates for quasilinear elliptic equations. {\em Acta Math.}~172
(1994), 137--161.

\bibitem
{KL1} \name[Kinnunen, J.]\et\name[Lewis, J.~L.]: Higher
integrability for parabolic systems of $p$-Laplacian type. {\em Duke
Math.~J.} 102 (2000), 253--271.

\bibitem
{KL2} \name[Kinnunen, J.]\et\name[Lewis, J.~L.]: Very weak
solutions of parabolic systems of $p$-Laplacian type. {\em
Ark.~Mat.~}40 (2002), 105--132.

\bibitem {KLPj} \name[Kinnunen, J.]\et\name[Lukkari, T.]\et\name[Parviainen, M.]: An existence result for superparabolic functions. {\em J. Funct. Anal.} 258 (2010), 713--728.

\bibitem {KLP} \name[Kinnunen, J.]\et\name[Lukkari, T.]\et\name[Parviainen, M.]: Local approximation of superharmonic and superparabolic functions in nonlinear potential theory. {\em Preprint 2012}.


\bibitem {KMr} \name[Kuusi, T.]\et\name[Mingione, G.]: Potential estimates and gradient boundedness for nonlinear parabolic systems. {\em Rev.~Mat.~Iberoamericana} 28 (2012), 535--576.







\bibitem {KMl} \name[Kuusi, T.]\et\name[Mingione, G.]: Nonlinear potential estimates in parabolic problems. {\em Rend.~Lincei - Mat.~e Appl.} 22 (2011), 161--174.

\bibitem {KMW} \name[Kuusi, T.]\et\name[Mingione, G.]: The Wolff gradient bound for degenerate parabolic equations. {\em J.~Europ.~Math.~Soc.}, to appear.

\bibitem {KMpisa} \name[Kuusi, T.]\et\name[Mingione, G.]: Gradient regularity for nonlinear parabolic equations. {\em Ann~Scu.~Norm.~Sup.~Pisa Cl.~Sci.~(V)}, to appear.

\bibitem {KMcras} \name[Kuusi, T.]\et\name[Mingione, G.]: A surprising linear type estimate for nonlinear elliptic equations. {\em C.~R.~Acad.~Sci.~Paris
Ser.~I} 349 (2011), 889--892.

\bibitem {KMarma} \name[Kuusi, T.]\et\name[Mingione, G.]: Linear potentials in nonlinear potential theory. {\em Arch.~Ration.~Mech.~Anal.} 207 (2013) 215--246. 

\bibitem {KMjmpa} \name[Kuusi, T.]\et\name[Mingione, G.]: New perturbation methods for nonlinear parabolic problems. {\em J.~Math.~Pures Appl.~(IX)} 98 (2012), 390--427.


\bibitem {KP} \name[Kuusi, T.]\et\name[Parviainen, M.]: Existence for a degenerate Cauchy problem. {\em manuscripta math.} 128 (2009), 213--249.


\bibitem {G} \name[Giusti, E.]: {\em Direct methods in the calculus of
variations.} World Scientific Publishing Co., Inc., River Edge, NJ,
2003.

\bibitem {L1} \name[Lindqvist, P.]: On the definition and properties of
$p$-superharmonic functions. {\em J.~reine angew.~Math.~(Crelles
J.)} 365 (1986), 67--79.

\bibitem {L2} \name[Lindqvist, P.]: Notes on the $p$-Laplace equation. {\em Univ. Jyv\"askyl\"a, Report} 102 (2006).

\bibitem {LM} \name[Lindqvist, P.]\et\name[Manfredi, J.J.]: Note on a remarkable superposition for a nonlinear equation. {\em Proc. AMS} 136 (2008), 133--140.

\bibitem {Man1} \name[Manfredi, J.J.]: Regularity for minima of functionals with
$p$-growth. {\em J.~Diff.~Equ.~}76 (1988), 203--212.

\bibitem {Man2} \name[Manfredi, J.J.]: Regularity of the gradient for a class of nonlinear possibly
degenerate elliptic equations. {\em Ph.D.~Thesis}. University of
Washington, St. Louis.

\bibitem {sns} \name[Mingione, G.]: The Calder\'on-Zygmund theory for elliptic problems with measure data.
{\em Ann~Scu.~Norm.~Sup.~Pisa Cl.~Sci.~(V)} 6 (2007),
195--261.

%\bibitem {math} \name[Mingione, G.]: Gradient estimates below the duality exponent. {\em Math.~Ann.~}346 (2010), 571--627.

\bibitem {mis3} \name[Mingione, G.]: Gradient potential estimates. {\em J.~Europ.~Math.~Soc.} 13 (2011), 459--486.
%

\bibitem {PV1} \name[Phuc, N.C.]\et\name[Verbitsky, I. E.]: Quasilinear and Hessian equations of Lane-Emden type. {\em Ann. of Math. (II)} 168 (2008), 859--914.

    \bibitem {PV2} \name[Phuc, N.C.]\et\name[Verbitsky, I. E.]: Singular quasilinear and Hessian equations and inequalities. {\em J. of Funct.  Anal.} 256 (2009), 1875--1906.
    
    
\bibitem {SC} \name[Saloff-Coste, L.]: The heat kernel and its estimates. {\em Probabilistic approach to geometry, Adv. Stud. Pure Math.} 57, Math. Soc. Japan, Tokyo (2010), 405--436.

\bibitem {Steinc}
\name[Stein, E. M.]:
Editor's note: the differentiability of functions in $\er^n$.
{\em Ann.~of Math.~(II)} 113 (1981), 383--385.

\bibitem {Steinbig}
\name[Stein, E. M.]: {\em Harmonic analysis: real-variable methods,
orthogonality, and oscillatory integrals}. Princeton Math. Series,
43. Princeton University Press, Princeton, NJ, 1993.

\bibitem{steinweiss} \name[Stein, E. M.]\et\name[Weiss, G.]: {\em
Introduction to Fourier analysis on Euclidean spaces.}
Princeton Math. Ser., 32. Princeton Univ. Press, Princeton, N.J. 1971.


\bibitem{TW} \name[Trudinger, N.S.]\et\name[Wang, X.J.]: On the weak continuity of
elliptic operators and applications to potential theory. {\em
Amer.~J.~Math.} 124 (2002), 369--410.


%\bibitem{TW2} \name[Trudinger, N.S.]\et\name[Wang, X.J.]: Quasilinear elliptic equations with signed measure data.
%{\em
%Disc.~Cont.~Dyn.~Systems A} 23 (2009), 477--494.

\bibitem {Urbano} \name[Urbano, J.M.]: {\em The method of intrinsic scaling. A systematic approach to regularity for degenerate and singular PDEs.} Lecture Notes in Mathematics, 1930. Springer-Verlag, Berlin, 2008.



\bibitem{V} \name[V\'azquez, J.L.]: {\em Smoothing and decay estimates for nonlinear diffusion equations.
   ÊEquations of porous medium type}. Oxford Lecture Series in Math. Appl., 33.
   ÊOxford University Press, Oxford, 2006. xiv+234 pp.

\end{thebibliography}
\end{document}